\newtheorem{theorem}{Theorem}[section]
\newtheorem{definition}[theorem]{Definition}
\newtheorem{lemma}[theorem]{Lemma}
\newtheorem{proposition}[theorem]{Proposition}
\newtheorem{corollary}[theorem]{Corollary}
\newtheorem{remark}[theorem]{Remark}
\newcommand{\ignore}[1]{}
\newcommand{\berg}{\mathcal{A}}
\newcommand{\besov}{\mathfrak{B}}
\newcommand{\dirichlet}{\mathcal{D}}
\title{\bf  Bloch, Besov and Dirichlet spaces of slice hyperholomorphic functions}
\author{
C. Marco Polo Castillo Villalba\\
Instituto de Matematicas \\
Cuidad Universitaria,\\
Unidad de Posgrado,\\
 C.P. 04510, M\'exico D.F. \\
mpcastillo@nemfis.mx
\and
Fabrizio Colombo\\
Politecnico di Milano\\
Dipartimento di Matematica\\Via E. Bonardi, 9\\20133 Milano,
Italy\\fabrizio.colombo@polimi.it
\and
Jonathan Gantner  \\
Vienna University of Technology\\
Institute for Analysis \\ and Scientific Computing\\
Wiedner Hauptstra\ss e 8 - 10\\
1040 Wien, Austria\\
jonathan.gantner@gmx.at
\and
J. Oscar Gonz\'alez-Cervantes\\
 Departamento de Matem\'aticas  \\
 E.S.F.M. del
I.P.N. 07338 \\
M\'exico D.F., M\'exico
\\
jogc200678@gmail.com
 }
\date{ }
\begin{document}

\maketitle
\begin{abstract}
In this paper we begin the study
of some important Banach spaces of slice hyperholomorphic functions, namely the Bloch,  Besov and weighted Bergman spaces, and we also consider the Dirichlet space, which is a Hilbert space. The importance of these spaces is well known, and thus their study in the framework of  slice hyperholomorphic functions is relevant, especially in view of the fact that this class of functions  has recently found several applications in operator theory and in Schur analysis.
We also discuss the property of invariance of this function spaces with respect to M\"obius maps by using a suitable notion of composition.
 \end{abstract}
\noindent AMS Classification:  32K05, 30G35.

\noindent {\em Key words}:  Bloch spaces, Besov spaces,  Dirichlet spaces, Slice hyperholomorphic functions.

\section{Introduction and preliminary results}

Since their introduction in 2006, see \cite{gs}, slice hyperholomorphic function have found several applications, e.g.
 in operator theory, see \cite{book_functional}, and in  Schur analysis.
Such functions are also called slice regular in the case they are defined on quaternions
and are  quaternionic-valued. In the case of functions $f :  \Omega \subseteq \mathbb{R}^{n+1}\to \mathbb{R}_n$, where
$\mathbb{R}^{n+1}$ is the Euclidean space
 and $\mathbb  R_n$ is the real Clifford algebra of real dimension $2^n$, these functions are also called slice monogenic. The main advantage of this class of functions, with respect to the more classical function theories like the one of Fueter regular functions
or of Dirac regular functions, see \cite{bds, fueter32, fueter 1, ghs}, is that it contains power series expansions of the quaternionic variable or of the paravector variable  in the case of Clifford algebras-valued functions.  In this paper we will consider the quaternionic setting.

In our recent works in Schur analysis (see  \cite{MR2002b:47144} for an overview) we have studied the Hardy space $H_2(\Omega)$ where $\Omega$ is the quaternionic unit ball $\mathbb B$ or the half space
$\mathbb H^+$ of quaternions with positive real part, \cite{acs1, acs2}.  The Hardy spaces $H^p(\mathbb B)$, $p>2$, are considered in \cite{sarfatti}. The Bergman spaces are treated in \cite{cglss, CGS, CGS3} and  for the Fock spaces see \cite{Fock}.

The aim of this paper is to continue in the systematic study of the function spaces of slice hyperholomorphic functions by deepening the study of Bergman spaces whose weighted versions are in Section \ref{WBSpaces}  of this paper. We also introduce and study some properties of the  Bloch, Besov and Dirichlet spaces on the unit ball $\mathbb B$.
In particular, we study the invariance of these spaces under the M\"obius transformations  using a suitable notion of composition of functions. In fact, as it is well known in hypercomplex analysis and thus also in the present setting, the composition of two functions satisfying a condition of holomorphicity (in this setting the slice regularity) is not, in general, holomorphic. Thus, using this notion of composition, we show the invariance of the Bloch and Besov spaces. These spaces turn our to be  Banach spaces.

To motivate the study of slice hyperholomorphic functions, we recall that from the Cauchy formulas for slice monogenic functions $f: \Omega\subset\mathbb{R}^{n+1} \to \mathbb{R}_n$ we can define the $S$-functional calculus, that  works for $n$-tuples of not necessarily commuting  operators (bounded or unbounded), see
\cite{duke, acgs, functionalcss}. This calculus can be  considered the noncommutative version of the Riesz-Dunford functional calculus.
The Cauchy formula for slice regular functions of a quaternionic variable is the main tool to define a functional calculus for quaternionic linear operators, which
 is based on the notion of $S$-spectrum for quaternionic operators,  see \cite{JGA}. This calculus
possesses most of the properties of the Riesz-Dunford functional calculus and
 allows the study of the quaternionic evolution operator, see
\cite{perturbation, evolution, GR} which is of importance in quaternionic quantum mechanics, see \cite{adler}.
It is worthwhile  to mention that the so-called slice continuous functions are the framework in which to define a continuous version of the S-functional calculus, see \cite{GMP}.

The plan of the paper is as follows. In Section \ref{BLSpaces} we treat the Bloch space  $\mathcal{B}$ on the unit ball $\mathbb B$. We recall the notion of $\circ_i$-composition and we show the invariance of $\mathcal{B}$ under M\"obius transformations with respect to this composition. We also prove some conditions on the coefficients of a converging power series belonging to the Bloch space which generalize to this setting the analogous inequalities in the case of a function belonging to the complex Bloch space. Finally,  we introduce the little Bloch space which turns out to be separable. In Section \ref{WBSpaces} we deepen the study of Bergman spaces by introducing their weighted versions.

Then, in Section \ref{BSpaces}, we move to the Besov spaces $\besov_p$. We show their invariance under M\"obius transformations, using the $\circ_i$-composition, then we introduce suitable seminorms and we study the property of being a Banach space. We also show the relation between  the Besov spaces and the weighted Bergman spaces  and $L^p$ spaces (via a Bergman type projection) and we conclude with some duality results.
Finally, in Section \ref{DSpaces} we introduce the Dirichlet space and we show that it is a Hilbert space.
\\
\\
We now recall the  main results of slice regular functions that we will need in the sequel. For more details see the book \cite{book_functional}.
We consider the space  $\mathbb R^3$ embedded in  $\mathbb H$ as follows $$(a_1,a_2,a_3) \mapsto a_1e_1+a_2e_2+a_3e_3,$$
where $\{e_0=1,e_1,e_2,e_3\}$ is the usual basis of the quaternions.
    Let $\mathbb S^2$ be the sphere of purely imaginary unit quaternions and let $i\in\mathbb S^2$. The  space generated by $\{1, i\}$, denoted by $\mathbb C(i)$,   is isomorphic, not only as a linear space but even as a field,  to  the field of the complex numbers.   Given a domain $\Omega\subset\mathbb H$, let  $\Omega_i=\Omega\cap \mathbb C(i)$  and   $Hol( \Omega_i)$ represents the  complex linear space of holomorphic functions from   $\Omega_i$  to $\mathbb C(i)$. \\
    Any nonreal quaternion $q=x_0+e_1x_1+e_2x_2+e_3x_3:=x_0+\vec q$ can be uniquely written in the form $q=x+I_qy$ where
     $x=x_0$,  $\displaystyle I_q=\frac{\vec q}{\|\vec q\|}\in\mathbb{S}^2$, and $y=\|\vec q\|$ thus it  belongs to the complex plane $\mathbb C(I_q)$.
\\
\begin{definition}[Slice regular or slice hyperholomorphic functions]
A real differentiable quaternionic-valued function $f$ defined on an open set $\Omega \subset\mathbb{H}$ is called (left) slice regular on $\Omega$ if, for any $i\in \mathbb S^2$, the function $f{\mid_{\Omega_{i}}}$
 is such that
$$
 \left( \frac{\partial}{\partial x} + i  \frac{\partial}{\partial y}\right) f\mid_{{\Omega_{i}}}(x+yi)=0, \textrm{ on $\Omega_{i}$}.
 $$
We denote by $\mathcal{SR}(\Omega)$ the set of slice regular functions on $\Omega$.
\end{definition}
\begin{lemma}[Splitting Lemma]\label{lemma spezzamento}
If $f$ is a slice regular function on an open set $\Omega$, then for
every $i \in \mathbb{S}^2$, and every $j\in\mathbb{S}^2$,
orthogonal  to $i$, there are two holomorphic functions
$F,G: \Omega\cap \mathbb{C}(i) \to \mathbb{C}(i)$ such that for any $z=x+yi$, it is
$$f\mid_{{\Omega_{i}}}(z)=F(z)+G(z)j.$$
\end{lemma}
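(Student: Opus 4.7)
The plan is to use the basis $\{1,i,j,k\}$ of $\hh$ with $k:=ij$, which is orthonormal since $i$ and $j$ are orthogonal unit imaginary quaternions. This gives the decomposition of $\hh$ as a right $\C(i)$-module of dimension two: every $q\in\hh$ can be written uniquely as $q=a+bj$ with $a,b\in\C(i)$. The key algebraic input that makes the argument go through is that elements of $\C(i)$ commute with $i$ on either side.

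First I would fix $i,j\in\s^2$ with $j\perp i$ and, for each $z\in\Omega_i$, define $F(z),G(z)\in\C(i)$ as the unique pair such that $f\mid_{\Omega_i}(z)=F(z)+G(z)j$. Since $f$ is real differentiable and the projections $\hh\to\C(i)$ onto the components relative to $\{1,j\}$ are $\rr$-linear, $F$ and $G$ are real differentiable on $\Omega_i$.

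Next I would substitute this decomposition into the slice regularity equation. Differentiating termwise gives $\pp_x f=\pp_xF+(\pp_xG)j$ and $\pp_y f=\pp_y F+(\pp_y G)j$. Multiplying the second by $i$ on the left and using $i\alpha=\alpha i$ for $\alpha\in\C(i)$ (so that $i(\pp_y G)j=(i\pp_y G)j$) yields
\[
\left(\frac{\pp}{\pp x}+i\frac{\pp}{\pp y}\right)f\mid_{\Omega_i}=\left(\pp_xF+i\pp_y F\right)+\left(\pp_xG+i\pp_y G\right)j.
\]
Because the representation $a+bj$ with $a,b\in\C(i)$ is unique, the vanishing of the left-hand side forces both bracketed expressions to vanish separately. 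These are precisely the Cauchy--Riemann equations for $F$ and $G$ regarded as $\C(i)$-valued functions of $z=x+yi\in\Omega_i$, so both $F$ and $G$ belong to $Hol(\Omega_i)$.

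The only real subtlety, and what I would flag as the step deserving care, is the commutation step: one must verify that $i(\alpha j)=(i\alpha)j=(\alpha i)j$ for $\alpha\in\C(i)$, which rests on associativity and on the fact that $\C(i)$ is a commutative subfield of $\hh$; without this observation the two summands would not cleanly separate along the basis $\{1,j\}$. Once this is in place, the proof is a direct computation and appeal to uniqueness of the decomposition.
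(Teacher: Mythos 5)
Your proof is correct and is precisely the standard argument: the paper states the Splitting Lemma without proof (citing \cite{book\_functional}), and the canonical proof in that reference is exactly the one you give — decompose $f\mid_{\Omega_i}=F+Gj$ along the right $\mathbb{C}(i)$-basis $\{1,j\}$ of $\mathbb{H}$, substitute into $\left(\partial_x+i\partial_y\right)f\mid_{\Omega_i}=0$, use $i(\alpha j)=(i\alpha)j$ with $i\alpha\in\mathbb{C}(i)$ to keep the two components separated, and conclude from uniqueness of the decomposition that $F$ and $G$ each satisfy the Cauchy--Riemann equations. You have also correctly isolated the one point that needs care, namely the commutation/associativity step that lets left multiplication by $i$ act componentwise.
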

The open sets on which  slice regular functions are naturally defined are described below.
\begin{definition}
Let $\Omega\subseteq\mathbb{H}$. We say that $\Omega$ is axially symmetric if whenever $q=x+I_qy$ belongs to $\Omega$ all the elements $x+iy$ belong to $\Omega$ for all $i\in\mathbb{S}^2$. We say that $\Omega$ is a slice domain, or s-domain for short, if it is a domain intersecting the real axis and such that $\Omega\cap \mathbb C(i)$ is connected for all $i\in\mathbb{S}^2$.
\end{definition}
A main property of slice hyperholomorphic functions is the Representation Formula, see \cite{book_functional}.

\begin{theorem}[Representation Formula]\label{formula} Let
$f$ be a slice regular function on an axially symmetric s-domain $\Omega\subseteq  \mathbb{H}$. Choose any
$j\in \mathbb{S}^2$.  Then the following equality holds for all $q=x+i y \in \Omega$:
\begin{equation}\label{relationflats}
    f(x+y i)= \frac{1}{2}(1+ i j) f(x-y j)+\frac{1}{2}(1- i  j) f(x+y j).
     \end{equation}
\end{theorem}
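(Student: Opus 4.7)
The plan is to fix $j \in \mathbb{S}^2$ and, for each $i \in \mathbb{S}^2$, to show that both sides of \eqref{relationflats} define the same slice regular function on the slice $\Omega_i$. Define
\[
\psi(x+yi) := \frac{1}{2}(1+ij)f(x-yj) + \frac{1}{2}(1-ij)f(x+yj), \qquad x+yi \in \Omega_i;
\]
by axial symmetry the points $x\pm yj$ automatically lie in $\Omega$, so $\psi$ is well defined.

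The core step is to verify that $\psi$ is slice regular on $\Omega_i$, i.e. $(\partial_x + i\,\partial_y)\psi = 0$. Writing $F_\pm(x,y):=f(x\pm yj)$ and invoking the slice regularity of $f$ on the slice $\Omega_j$, one obtains $\partial_y F_+ = j\,\partial_x F_+$ and $\partial_y F_- = -j\,\partial_x F_-$. Plugging this into $(\partial_x + i\partial_y)\psi$ leaves two expressions in $\partial_x F_\pm$ whose coefficients are $\tfrac12(1\pm ij) \mp \tfrac{i}{2}(1\pm ij)j$; the elementary identities $(1+ij)j = j-i$ and $(1-ij)j = j+i$, together with $i^2 = -1$, show that each coefficient collapses to zero. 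On the real axis ($y=0$) the definition reduces to $\psi(x) = \tfrac12\bigl((1+ij) + (1-ij)\bigr)f(x) = f(x)$.

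Since $\Omega$ is an s-domain, $\Omega_i$ is a connected open subset of $\mathbb{C}(i)$ containing the nonempty open set $\Omega\cap\mathbb{R}$. Choosing $j'\in\mathbb{S}^2$ orthogonal to $i$, the Splitting Lemma applied to the slice regular function $\psi - f|_{\Omega_i}$ writes it as $F + G j'$ with $F,G\colon\Omega_i\to \mathbb{C}(i)$ holomorphic; the vanishing of $\psi - f|_{\Omega_i}$ on $\Omega\cap\mathbb{R}$, together with the decomposition $\mathbb{H} = \mathbb{C}(i)\oplus \mathbb{C}(i)j'$, forces both $F$ and $G$ to vanish on $\Omega\cap\mathbb{R}$, and the classical identity principle for holomorphic functions yields $F = G = 0$ on $\Omega_i$. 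Hence $\psi = f$ on $\Omega_i$, and since $i\in\mathbb{S}^2$ was arbitrary, \eqref{relationflats} holds throughout $\Omega$.

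The main subtlety lies in the slice-regularity computation for $\psi$: the chosen $i$ and $j$ are unrelated elements of $\mathbb{S}^2$ and need not commute or anticommute, so the reduction has to rely solely on the universal relations $i^2 = j^2 = -1$ without invoking any convenient commutation between $i$ and $j$.
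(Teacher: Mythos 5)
Your proof is correct. Note that the paper does not prove this theorem at all: it is imported from \cite{book_functional} as a known result. Your argument --- showing that $\psi(x+yi)=\tfrac12(1+ij)f(x-yj)+\tfrac12(1-ij)f(x+yj)$ satisfies $(\partial_x+i\partial_y)\psi=0$ on each slice via the relations $\partial_y F_\pm=\pm j\,\partial_x F_\pm$ and the identities $(1\pm ij)j=j\mp i$, then concluding by the Splitting Lemma and the identity principle from agreement on $\Omega\cap\mathbb{R}$ --- is essentially the standard proof found in that reference, and your remark that no commutation relation between $i$ and $j$ may be assumed is exactly the right point to flag.
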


Let ${i,  j} \in\mathbb S^2$ be mutually orthogonal vectors  and   $\Omega\subset \mathbb H$ an axially symmetric s-domain, then the Splitting Lemma and the Representation Formula,  imply the good definition of the following operators, which  relate the slice regular space with the space  of pairs of holomorphic functions on $\Omega_i$, denoted by $Hol(\Omega_i)$. We define:
  $$
  \begin{array}{lrcl}
    Q_i : &  \mathcal{SR}(\Omega)  &\longrightarrow &  Hol(\Omega_i)+ Hol(\Omega_i)j\\
 &  & & \\
Q_i :  & f & \longmapsto & f\mid_{\Omega_i},\quad \quad  \end{array}
  $$
and
   $$ P_i :   Hol(\Omega_i)+ Hol(\Omega_i)j \longrightarrow  \mathcal{SR}(\Omega),$$
   $$
   P_i[f](q)=P_i[f](x+yI_q)=\frac{1}{2}\left[(1+ I_qi)f(x-yi) + (1- I_q i) f(x+y i)\right],
   $$
   where $f\in  Hol(\Omega_i)+ Hol(\Omega_i)j$.
 Moreover, we have  that
$$P_i\circ Q_i= \mathcal{I}_{\mathcal{SR}(\Omega)} \quad \textrm{and}
 \quad  Q_i\circ P_i= \mathcal{I}_{ Hol(\Omega_i)+ Hol(\Omega_i)j } $$
where $\mathcal{I}$ denotes the identity operator.

\section{Bloch space }\label{BLSpaces}
Let $\mathbb B = \mathbb B(0,1) \subset\mathbb H$ denote the unit ball centered at zero. Then, for each $i\in \mathbb S^2$, we denote by  $\mathbb B_i = \mathbb B \cap \mathbb C(i)$ the unit disk in the plane $\mathbb{C}(i)$.

In usual complex analysis, the Bloch space $\mathcal{B}_{\mathbb C}$ is defined to be the space of analytic functions on the unit disc $\mathbb D$ in the complex plane such that
$$
\|f\|_{\mathcal B_{\mathbb C}} = |f(0)| + \sup\{(1-|z|^2)|f'(z)|\colon z\in\mathbb D\}<\infty.
$$
Analogously, we give the following definitions.
\begin{definition}
The {\em quaternionic slice regular Bloch space} $\mathcal B$ associated with $\mathbb B$ is the quaternionic  right linear space of slice regular  functions $f$  on $\mathbb B$  such that
 $$\sup \{    ( 1-|   q|^2  ) | \frac{\partial f}{\partial x_0} (q)| \colon q\in \mathbb B\} < \infty. $$
 We define a norm on this space by
 $$ \| f\|_{\mathcal B}   =|f(0)| + \sup \{    ( 1-|   q|^2  ) | \frac{\partial f}{\partial x_0} (q)| \colon q\in \mathbb B\} < \infty. $$
 \end{definition}
 In the sequel, we will be also in need of the following definition:
 \begin{definition}\label{BICONI}
 By $\mathcal B_i$ we denote the quaternionic right linear space of slice regular functions $f$  on $\mathbb B$   such that
$$   \sup \{    (1-|z|^2)| Q_i[f]  ' (z)  |   \colon   z\in \mathbb B_i   \} <\infty. $$
We define a norm on this space by
$$  \|  f \|_{\mathcal B_i}  =|f(0) | + \sup \{    (1-|z|^2)| Q_i[f]  ' (z)  |   \colon    z\in \mathbb B_ i  \}. $$
\end{definition}
\begin{remark}{\rm
The fact that $\| f\|_{\mathcal B}$ and $\|  f \|_{\mathcal B_i}$ are norms can be verified by a direct computation.
}
\end{remark}
\begin{remark}{\rm
Note that for any $i\in\mathbb{S}^2$ the function $\displaystyle{Q}_i[f]$ is a holomorphic map of a complex variable $z = x_0 + iy$ and that for its derivative we have
$\displaystyle Q_i[f]  ' (z) = \frac{ \partial Q_i[f] }{\partial x_0} (z).$
}
\end{remark}
\begin{remark}\label{rem1}
{\rm
Let $i\in \mathbb S^2$, and let $f\in \mathcal B_i$. Then, for any $j\in \mathbb S^2$ with $j\perp i$, there exist holomorphic functions $f_1, f_2\colon \mathbb{B}_i\to\mathbb{C}(i)$ such that $Q_i[f] = f_1+  f_2j$. Moreover, as
$$
|Q_i[f]  ' (z)  |^2=  |f_1 ' (z)  |^2 +  |f_2  ' (z)  |^2,
 $$
  the condition  $f\in \mathcal B_i$  is equivalent to $f_1,f_2$ belonging to the one dimensional complex  Bloch space. As this is a Banach space one can see directly that  $(\mathcal B_i , \| \cdot \|_{\mathcal B_i})$  is also a Banach space.
}
\end{remark}

\begin{proposition}\label{prop_Bnorms}
Let  $i \in \mathbb S^2$. Then  $ f\in\mathcal B_i$ if and only if $f\in\mathcal B$. The space  $(\mathcal B , \| \cdot \|_{\mathcal B})$  is a Banach space and $(\mathcal B, \| \cdot \|_{\mathcal B})$ and    $(\mathcal B_i , \| \cdot \|_{\mathcal B_i})$ have equivalent norms. Precisely, one has
$$\|f\|_{\mathcal B_i}\leq \|f\|_{\mathcal B} \leq  2\|f\|_{\mathcal B_i}. $$
\end{proposition}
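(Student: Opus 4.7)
The plan is to prove equivalence of the two norms, after which the Banach property of $\mathcal{B}$ follows immediately from the Banach property of $\mathcal{B}_i$ stated in Remark \ref{rem1}.

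The inequality $\|f\|_{\mathcal{B}_i}\le \|f\|_{\mathcal{B}}$ is essentially tautological. Since $Q_i[f]$ is the restriction of $f$ to $\mathbb{B}_i$ and is holomorphic in $z=x_0+iy$, its complex derivative satisfies $Q_i[f]'(z)=\frac{\partial f}{\partial x_0}(z)$ for every $z\in \mathbb{B}_i$ (as already noted in the remark preceding Definition \ref{BICONI}). Taking the supremum over the smaller set $\mathbb{B}_i\subset \mathbb{B}$ can only decrease the value, so $\sup_{z\in\mathbb{B}_i}(1-|z|^2)|Q_i[f]'(z)|\le \sup_{q\in\mathbb{B}}(1-|q|^2)|\tfrac{\partial f}{\partial x_0}(q)|$, and adding $|f(0)|$ to both sides gives the inequality. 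In particular, this shows $f\in\mathcal{B}\Rightarrow f\in\mathcal{B}_i$.

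For the reverse direction $\|f\|_{\mathcal{B}}\le 2\|f\|_{\mathcal{B}_i}$, the key is the Representation Formula (Theorem \ref{formula}). For any $q=x+I_q y\in\mathbb{B}$, it gives
\begin{equation*}
f(x+I_q y)=\tfrac{1}{2}(1+I_q i)\,f(x-yi)+\tfrac{1}{2}(1-I_q i)\,f(x+yi).
\end{equation*}
Differentiating both sides with respect to $x_0=x$ (and using that on $\mathbb{B}_i$ the partial $\partial/\partial x_0$ agrees with the complex derivative $Q_i[f]'$) yields
\begin{equation*}
\tfrac{\partial f}{\partial x_0}(q)=\tfrac{1}{2}(1+I_q i)\,Q_i[f]'(x-yi)+\tfrac{1}{2}(1-I_q i)\,Q_i[f]'(x+yi).
\end{equation*}
A direct calculation for unit imaginary quaternions gives $|1\pm I_q i|^2=2\mp 2\langle i,I_q\rangle\le 4$, hence $|1\pm I_q i|\le 2$. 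Since moreover $|x\pm yi|^2=x^2+y^2=|q|^2$, multiplying by $(1-|q|^2)$ and taking absolute values produces
\begin{equation*}
(1-|q|^2)\bigl|\tfrac{\partial f}{\partial x_0}(q)\bigr|\le 2\sup_{z\in\mathbb{B}_i}(1-|z|^2)|Q_i[f]'(z)|.
\end{equation*}
Taking the supremum over $q\in\mathbb{B}$ and adding $|f(0)|$ to both sides (using $|f(0)|\le 2|f(0)|$) gives $\|f\|_{\mathcal{B}}\le 2\|f\|_{\mathcal{B}_i}$. In particular $f\in\mathcal{B}_i\Rightarrow f\in\mathcal{B}$, so $\mathcal{B}$ and $\mathcal{B}_i$ coincide as sets.

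Finally, the Banach property of $(\mathcal{B},\|\cdot\|_\mathcal{B})$ follows: by Remark \ref{rem1} the space $(\mathcal{B}_i,\|\cdot\|_{\mathcal{B}_i})$ is Banach, and completeness is preserved under equivalence of norms on the same underlying vector space. The only step with any subtlety is the bound $|1\pm I_q i|\le 2$, which is the pointwise cost of passing from a single slice $\mathbb{B}_i$ to an arbitrary slice $\mathbb{C}(I_q)$ via the Representation Formula; everything else is restriction plus a straightforward differentiation.
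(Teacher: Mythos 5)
Your proposal is correct and follows essentially the same route as the paper: the first inequality is immediate from $\mathbb{B}_i\subset\mathbb{B}$, the second comes from the Representation Formula together with the bound $|1\pm I_q i|\le 2$ and $|q|=|z|=|\bar z|$, and completeness of $\mathcal{B}$ is inherited from $\mathcal{B}_i$ via the norm equivalence. The only cosmetic difference is that the paper applies the Representation Formula directly to the slice regular function $\partial f/\partial x_0$, whereas you apply it to $f$ and then differentiate in $x_0$; both yield the same identity.
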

\begin{proof}
 As  $\mathbb B_i \subset\mathbb B$, for any $f\in \mathcal{B}$,  one has $ \|  f \|_{\mathcal B_i}  \leq \|f\|_{\mathcal B} $ by definition. Therefore  $ \mathcal B \subset\mathcal B_i$. \\
 On the other hand, let $f\in \mathcal B_i$. Then $\frac{\partial f}{\partial x_0}$ is slice regular. Thus, we can apply the Representation Formula for slice regular functions and we obtain
$$  \frac{\partial f}{\partial x_0}(q) = \frac{1}{2} [   (1-Ii)\frac{\partial f}{\partial x_0} (z) + (1+Ii) \frac{\partial f}{\partial x_0}(\bar  z)    ] , $$
where $q=x+Iy$ and $z=x+iy$. Applying the triangle inequality, as $|1-Ii|\leq 2$,  $|1+Ii|\leq 2$, we obtain
$$
|\frac{\partial f}{\partial x_0}(q)| \leq |\frac{\partial f}{\partial x_0} (z) |+  | \frac{\partial f}{\partial x_0}(\bar  z)    |  ,
$$
and as $|q| = |z| = |\bar{z}|$ we get
 $$(1-|q|^2) |\frac{\partial f}{\partial x_0}(q)| \leq (1-|z|^2)  |\frac{\partial f}{\partial x_0} (z) |+ (1-|\bar z|^2) | \frac{\partial f}{\partial x_0}(\bar  z)    | \leq 2\| f\|_{\mathcal B_i} .$$
Taking the supremum over all $q\in\mathbb{B}$, one concludes that
$$\|f\|_{\mathcal B} \leq 2\|f\|_{\mathcal B_i}.$$
Thus, $f\in\mathcal{B}$. Furthermore, the norms $\| \cdot\|_{\mathcal{B}_i}$ and $\|\cdot\|_{\mathcal{B}}$ are equivalent and $\mathcal{B}$ turns out to be a Banach space.

\end{proof}
\begin{remark}{\rm  For  $z=x+iy \in \mathbb B_i$ and $w=x+jy \in \mathbb B_j$, from the proof of the previous result, in particular, we have that
$$(1-|z|^2) | Q_{i}[f]'(z)| \leq  (1-|w|^2) | Q_{j}[f]'(w)| + (1-|\bar w|^2) | Q_{j}[f]'(\bar w)| \leq 2 \|f\|_{\mathcal B_j}.
$$
Taking the supremum over all $z\in\mathbb B_i$ we get
$$\|f\|_{\mathcal B_i} \leq 2\|f\|_{\mathcal B_j}. $$
Thus, we have that  $f\in\mathcal B_i$ if and only if $f\in \mathcal{B}_j$ and that the norms $\|\cdot\|_{\mathcal{B}_i}$ and $\|\cdot\|_{\mathcal{B}_j}$ are equivalent.
}
\end{remark}

\begin{remark}{\rm
In usual complex analysis  it´s well known that the $\mathcal B_i$  is not separable, therefore $\mathcal B$ is also not separable. In fact, following the complex case, see \cite{danikas},  one can consider the family of  slice regular functions   on  $\mathcal B$
$$  f_n(q) = P_i\Big[ \frac{e^{-in}}{2}  \log  \Big(\frac{  1+ e^{-in}  z }{1-e^{-in} z  }\Big)\Big] (q),\ \ \ \ n\in \mathbb{N}
$$
 where $q=x+Iy$ and  $z=x+iy$,  which has the following property:
$$   \| f_n - f_m \|_{\mathcal B} \geq 1, $$
so $\mathcal B$ is not separable.
}
\end{remark}

Similar to the usual complex case, see \cite{kehe}, we formulate the following definition, which can be found also in \cite{sarfatti}.
\begin{definition}
We define the space $\mathcal H^{\infty}  (\mathbb B) $ as the quaternionic right linear space of bounded slice regular functions $f$ defined on $\mathbb B$, i.e. of all slice regular functions such that
$$   \|f\|_{\infty } = \sup \{  |f(q)| \ \colon \ q\in \mathbb B\} < \infty.$$
\end{definition}
We will be also in need of the following:
\begin{definition}
We denote by $\mathcal H_i^{\infty}  (\mathbb B)$  the quaternionic right linear space of slice regular functions  $f$ on $ \mathbb B$ such that
$$   \|f\|_{\infty, i  } =  \sup \{  |Q_{i}[f](z)| \colon z\in \mathbb B_i\} <\infty.$$
\end{definition}

\begin{remark}{\rm
Let $i\in \mathbb S^2$ and let $f\in    \mathcal H_i^{\infty}  (\mathbb B_i)$. Then for any $j\in\mathbb{S}^2$ with $j\perp i$ there exist holomorphic functions $f_1,f_2\colon \mathbb{B}_i\to\mathbb{C}(i)$ such that $Q_i[f] = f_1+f_2 j$. It is easy to show that $f\in    \mathcal H_i^{\infty}  (\mathbb B_i)$ if and only if $f_1$ and $f_2$ belong to the usual complex space $\mathcal H_{\mathbb{C}}^{\infty}(\mathbb B_i)$ of bounded holomorphic functions defined in $\mathbb B_i$. Moreover,  as  $\mathcal H_{\mathbb{C}}^{\infty}(\mathbb B_i)$ is a Banach space, one can see that $   \mathcal H_i^{\infty}  (\mathbb B_i)$ is also a Banach space.
}
 \end{remark}

\begin{proposition}
Let $i\in \mathbb S^2$. Then $f\in\mathcal H^{\infty} _i  (\mathbb B)$ if and only if $f\in \mathcal H^{\infty}  (\mathbb B) $. The space $  (  \mathcal H^{\infty}  (\mathbb B) ,   \|\cdot\|_{\infty } )$ is a Banach space. Moreover, the norms $\|\cdot\|_{\infty}$ and $\|\cdot\|_{\infty,i}$ are equivalent. Precisely, one has
$$  \|f\|_{\infty, i}\leq \|f\|_{\infty }  \leq  2\|f\|_{\infty, i}.$$
\end{proposition}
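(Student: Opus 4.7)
The plan is to follow essentially the same argument used for Proposition \ref{prop_Bnorms}, replacing $\partial f/\partial x_0$ by $f$ itself, since $f$ (rather than its derivative) is what appears in the definition of $\|\cdot\|_{\infty}$ and $\|\cdot\|_{\infty,i}$.

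First, the inequality $\|f\|_{\infty,i}\le\|f\|_{\infty}$ is immediate from the inclusion $\mathbb B_i\subset\mathbb B$: taking the supremum of $|f|$ over the smaller set $\mathbb B_i$ can only yield a smaller value. In particular, this gives the inclusion $\mathcal H^{\infty}(\mathbb B)\subseteq \mathcal H_i^{\infty}(\mathbb B)$. For the reverse direction, let $f\in\mathcal H_i^{\infty}(\mathbb B)$. Since $f$ is slice regular on $\mathbb B$, which is an axially symmetric s-domain, I would apply the Representation Formula (Theorem \ref{formula}) directly to $f$: for $q=x+Iy\in\mathbb B$, set $z=x+iy\in\mathbb B_i$; then
$$f(q)=\frac{1}{2}(1+Ii)f(\bar z)+\frac{1}{2}(1-Ii)f(z).$$
Using $|1\pm Ii|\le 2$ together with the triangle inequality, one gets $|f(q)|\le|f(z)|+|f(\bar z)|\le 2\|f\|_{\infty,i}$, where the last step uses that both $z$ and $\bar z$ lie in $\mathbb B_i$. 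Taking the supremum over $q\in\mathbb B$ yields the estimate $\|f\|_{\infty}\le 2\|f\|_{\infty,i}$, which also shows $f\in\mathcal H^{\infty}(\mathbb B)$ and the equivalence of the two norms.

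For the Banach space property, I would invoke the remark immediately preceding the proposition: via the Splitting Lemma, $Q_i[f]=f_1+f_2 j$ and $f\in\mathcal H_i^{\infty}(\mathbb B)$ is equivalent to $f_1,f_2\in\mathcal H^{\infty}_{\mathbb C}(\mathbb B_i)$, which is a classical complex Banach space; it then follows that $(\mathcal H_i^{\infty}(\mathbb B),\|\cdot\|_{\infty,i})$ is Banach. Since $\mathcal H^{\infty}(\mathbb B)$ coincides with $\mathcal H_i^{\infty}(\mathbb B)$ as a set and the two norms are equivalent, completeness transfers: any Cauchy sequence with respect to $\|\cdot\|_{\infty}$ is Cauchy with respect to $\|\cdot\|_{\infty,i}$, converges there to some $f$, and the same equivalence yields convergence in $\|\cdot\|_{\infty}$.

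There is no real obstacle here; the proof is routine and mirrors the Bloch case almost verbatim. The only thing to keep track of is the correct form of the Representation Formula so that the coefficients $(1\pm Ii)/2$ appear and can be bounded by $1$ in modulus (up to the factor $2$), and the observation that $|z|=|\bar z|=|q|$ so that the values $f(z),f(\bar z)$ used to bound $|f(q)|$ do indeed come from points of $\mathbb B_i$.
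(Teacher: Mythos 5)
Your proof is correct and follows essentially the same route as the paper: the first inequality from the inclusion $\mathbb B_i\subset\mathbb B$, the second from applying the Representation Formula directly to $f$ with the bound $|1\pm Ii|\le 2$, and completeness transferred from $\mathcal H_i^{\infty}(\mathbb B)$ via the norm equivalence. You simply spell out the details that the paper leaves implicit.
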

\begin{proof}
By definition, we have $\| f\|_{\infty,i}\leq \| f\|_{\infty}$.  Furthermore, from the Representation Formula it follows that  for any $f\in \mathcal{SR}(\Omega)$ one has
$$   \|f\|_{\infty }  \leq  2\|f\|_{\infty, i}.$$
Therefore, $\| f\|_{\infty} < \infty$ if and only $\| f\|_{\infty,i} <\infty$, the norms are equivalent and, as $\mathcal{H}_i^{\infty}(\mathbb{B})$ is a Banach space, $\mathcal{H}^{\infty}(\mathbb{B})$ is a Banach space too.

\end{proof}

\begin{proposition} Let $f\in\mathcal H^{\infty}(\mathbb B)$. Then $f\in \mathcal B$ and $\| f\|_{\mathcal B} \leq 4\|f\|_{\infty}.$
\end{proposition}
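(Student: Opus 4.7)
The plan is to reduce the estimate to classical one-variable complex function theory via the Splitting Lemma. Since
$$
\|f\|_{\mathcal B}=|f(0)|+\sup_{q\in\mathbb B}(1-|q|^{2})\bigl|\tfrac{\partial f}{\partial x_{0}}(q)\bigr|,
$$
and $|f(0)|\le\|f\|_{\infty}$ is immediate, the real work is to bound the second summand by a constant multiple of $\|f\|_{\infty}$.

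Concretely, I would proceed as follows. Fix $q\in\mathbb B$ and choose $I\in\mathbb S^{2}$ with $q\in\mathbb{C}(I)$ (take $I=I_q$ if $q\notin\mathbb{R}$, arbitrary otherwise), together with some $J\in\mathbb S^{2}$ orthogonal to $I$. By the Splitting Lemma, $Q_I[f]=g_1+g_2J$ for holomorphic $g_1,g_2\colon\mathbb B_I\to\mathbb C(I)$. The pointwise identity $|Q_I[f](z)|^{2}=|g_1(z)|^{2}+|g_2(z)|^{2}$ forces $\|g_k\|_{\infty}\le\|f\|_{\infty}$ for $k=1,2$. The classical complex Schwarz--Pick inequality, applied to each $g_k$, gives $(1-|z|^{2})|g_k'(z)|\le\|g_k\|_{\infty}\le\|f\|_{\infty}$, and combining via the derivative analogue already recorded in Remark \ref{rem1} yields
$$
(1-|z|^{2})^{2}|Q_I[f]'(z)|^{2}=(1-|z|^{2})^{2}\bigl(|g_1'(z)|^{2}+|g_2'(z)|^{2}\bigr)\le 2\|f\|_{\infty}^{2}.
$$
Because the remark following Definition \ref{BICONI} identifies $Q_I[f]'(q)$ with $\tfrac{\partial f}{\partial x_{0}}(q)$ when $q\in\mathbb C(I)$, this reads $(1-|q|^{2})\bigl|\tfrac{\partial f}{\partial x_{0}}(q)\bigr|\le\sqrt{2}\,\|f\|_{\infty}$.

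Taking the supremum over $q$ and adding $|f(0)|\le\|f\|_{\infty}$ then produces $\|f\|_{\mathcal B}\le(1+\sqrt{2})\|f\|_{\infty}<4\|f\|_{\infty}$, which yields both $f\in\mathcal B$ and the stated inequality. There is no serious obstacle; the only point worth double-checking is that the Splitting decomposition commutes with the $x_{0}$-derivative, which is transparent because $J$ is a constant quaternion. The numerical constant $4$ in the theorem is accordingly not sharp---one actually gets $1+\sqrt{2}$---but is a convenient round upper bound. Note that routing the argument through Proposition \ref{prop_Bnorms} instead of directly bounding $(1-|q|^2)|\partial f/\partial x_0(q)|$ would incur an additional factor of $2$ and overshoot the advertised constant, so the direct approach via $I=I_q$ is preferable.
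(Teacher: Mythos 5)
Your argument is correct, and it takes a noticeably different route from the paper's. The paper fixes a single slice $i$, bounds $\|f\|_{\mathcal B_i}\leq \|f_1\|_{\mathcal B_{\mathbb C}}+\|f_2\|_{\mathcal B_{\mathbb C}}\leq 2\|f\|_{\infty,i}$ by the triangle inequality applied to the splitting components, and then invokes Proposition \ref{prop_Bnorms} (i.e.\ the Representation Formula) to pass from $\|\cdot\|_{\mathcal B_i}$ to $\|\cdot\|_{\mathcal B}$ at the cost of another factor $2$, arriving at exactly $4\|f\|_{\infty}$. You instead choose the slice through each point $q$ (taking $I=I_q$), so the Representation Formula is never needed, and you combine the two components with the Pythagorean identities $|Q_I[f]|^2=|g_1|^2+|g_2|^2$ and $|Q_I[f]'|^2=|g_1'|^2+|g_2'|^2$ rather than the triangle inequality; together with the Schwarz--Pick bound $(1-|z|^2)|g_k'(z)|\leq\|g_k\|_{\infty}$ this yields the sharper constant $1+\sqrt{2}$. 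All the steps check out: $\|g_k\|_{\infty}\leq\|f\|_{\infty}$ follows from the pointwise identity, the splitting does commute with $\partial_{x_0}$ since $J$ is constant, and $\partial_{x_0}f(q)=Q_{I_q}[f]'(q)$ on the slice as recorded after Definition \ref{BICONI}. The trade-off is that the paper's proof is more modular --- it reuses the already-established norm comparison and the classical statement $\|g\|_{\mathcal B_{\mathbb C}}\leq\|g\|_{\infty}$ wholesale --- whereas yours requires the pointwise slice-selection device but buys a better constant and a self-contained estimate. Your closing observation that combining your slice bound with Proposition \ref{prop_Bnorms} would give $2(1+\sqrt 2)>4$ is also accurate; the paper avoids this only because its per-slice constant is $2$ rather than $1+\sqrt 2$.
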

\begin{proof} Let $f\in\mathcal H^{\infty}(\mathbb B)$ and $i,j\in\mathbb{S}^2$ such that $i\perp j$. Then there exist holomorphic functions $f_1,f_2\colon \mathbb{B}_i\to\mathbb{C}(i)$ such that $\mathcal{Q}_i[f] = f_1 + f_2j$.
 From usual complex analysis, it is well known that, for any holomorphic function $g$ defined on the complex unit disc, the estimates $\| g \|_{\mathcal{B}_ \mathbb{C}} \leq \| g\|_{\infty}$ holds (see e.g. \cite{kehe}).Thus one has
 \begin{align*}
 \|f\|_{\mathcal{B}_i}&\leq |f_1(0)| + \sup_{z\in\mathbb{B}_i}(1-|z|^2)|f_1'(z)| + |f_2(0)|+\sup_{z\in\mathbb{B}_i}(1-|z|^2)|f_2'(z)|  \\
&=\|f_1\|_{\mathcal{B}_\mathbb{C}} + \|f_2\|_{\mathcal{B}_\mathbb{C}} \leq\|f_1\|_{\infty} + \|f_2\|_{\infty} \leq 2\| f\|_{\infty,i}.
 \end{align*}
Therefore, from the previous propositions,  one obtains that  $$\|f\|_{\mathcal B}\leq 2 \|f\|_{\mathcal{B}_i} \leq 4\|f\|_{\infty,i} \leq 4  \|f\|_{\infty}.$$
In particular, $\| f\|_{\mathcal B} < \infty$ if  $\| f\|_{\infty}<\infty$.
\end{proof}

In general, as in the complex case (see \cite{kehe}), $f\in\mathcal B$ does not imply  $f\in\mathcal{H}^{\infty}(\mathbb{B})$. For example the function
$$ f(q)= P_{i} [\log(1-z)](q),   $$
belongs to $\mathcal B$, but $Q_{i}[f]$ does not belong  to $ \mathcal H_i^{\infty}  (\mathbb B_i) $ and so  $f\notin  \mathcal H^{\infty}  (\mathbb B) $.
\\

The complex Bloch space is important because of its invariance with
respect to M\"obius transformation. As it is well known, when dealing
with hyperholomorphic functions
(not only in the slice regular setting but also in more classical
settings, like in the Cauchy-Fueter regular setting) the composition of
hyperholomorphic
functions does not give, in general, a function of the same type.
The M\"obius transformation we will consider are those which are slice
regular, see \cite{caterina}, and we will define a suitable notion of composition
which will allow us to prove invariance under M\"obius transformation.\\
For the  $*$-product of slice hyperholomorphic functions used below,  we refer the reader to  \cite{book_functional}.

\begin{definition}\label{def_moebius}
For any $a\in \mathbb B\setminus \mathbb R$ we define the slice regular M\"obius transformation as
$$T_a(q)=(1- q\bar a ) ^{-\ast}\ast (a-q), \quad q\in\mathbb B,$$
where $\ast$ denotes the slice regular product.
 \end{definition}

\begin{proposition}\label{Moeb_prop} Let $a\in\mathbb{B}\setminus\mathbb{R}$ and $\displaystyle I=\frac{\vec{a}}{\|\vec{a}\|}$, or
 $a\in\mathbb{R}$ and $I$ is any element in $\mathbb{S}^2$. Then the slice regular M\"obius transformation $T_a$ has the following properties:
\begin{enumerate}[(i)]
\item $T_{a}$ is a bijective mapping of $\mathbb B$ onto itself.
\item \label{Moeb_red}$ \displaystyle T_a(z)= \frac{a-z}{1-\bar{a} z} $   for all $ z\in   \mathbb B_{I}$, i.e.  on $\mathbb B_I$ the slice regular  M\"obius transformation coincides with the usual one dimensional complex  M\"obius transformation.
\item $T_a(0) = a$, $T_a(a) = 0$ and $T_a\circ T_a(q) = q$ for all $q\in \mathbb{C}(I)$.
\end{enumerate}
\end{proposition}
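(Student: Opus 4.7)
My plan is to establish (ii) first and then derive (iii) and (i) from it, exploiting the fact that the entire structure reduces to classical complex analysis on the slice $\mathbb{C}(I)$.

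For (ii), the key observation is that when $I=\vec{a}/\|\vec{a}\|$, both $a$ and $\bar{a}$ lie in $\mathbb{C}(I)$ (the case $a\in\mathbb{R}$ is trivial, since every slice contains $a$). Consequently the slice regular polynomials $1-q\bar{a}$ and $a-q$ have coefficients in the commutative field $\mathbb{C}(I)$. The $\ast$-product of two slice regular functions whose coefficients all lie in a single slice $\mathbb{C}(I)$ coincides, on $\mathbb{B}_I$, with the ordinary pointwise product. Moreover, since $|z\bar{a}|=|z||a|<1$ on $\mathbb{B}_I$, the factor $1-z\bar{a}$ is a nonzero element of $\mathbb{C}(I)$, so the $\ast$-inverse coincides with the ordinary reciprocal. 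This yields $T_a(z)=(1-z\bar{a})^{-1}(a-z)=(a-z)/(1-\bar{a}z)$, the classical complex M\"obius transformation of $\mathbb{B}_I$.

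The identities $T_a(0)=a$ and $T_a(a)=0$ in (iii) follow by direct substitution into the formula of (ii); the involution $T_a\circ T_a(q)=q$ for $q\in\mathbb{C}(I)\cap\mathbb{B}$ reduces, thanks to (ii), to the classical identity for complex M\"obius transformations of the disk, valid once one knows (again by the complex theory) that $T_a(\mathbb{B}_I)\subseteq\mathbb{B}_I$, so the composition is meaningful.

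For (i), I would first verify that $T_a$ is slice regular throughout $\mathbb{B}$: the symmetrization $(1-q\bar{a})\ast(1-qa)=1-q(a+\bar{a})+q^{2}|a|^{2}$ has roots on the sphere $\{|q|=1/|a|\}$ by a short discriminant calculation, which lies outside $\mathbb{B}$ since $|a|<1$; hence $(1-q\bar{a})^{-\ast}$ is defined on all of $\mathbb{B}$. To show $T_a(\mathbb{B})\subseteq\mathbb{B}$ I would apply the Representation Formula on each slice $\mathbb{C}(J)$, writing $T_a(x+Jy)$ in terms of $T_a(x\pm Iy)\in\mathbb{B}_I$ whose moduli are strictly less than $1$ by (ii). Bijectivity would then follow from $T_a\circ T_a=\mathrm{id}$ on all of $\mathbb{B}$, since every $p\in\mathbb{B}$ is the image of $T_a(p)$ (giving surjectivity), and injectivity is immediate from the involution.

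The main obstacle is twofold: upgrading $T_a\circ T_a=\mathrm{id}$ from $\mathbb{C}(I)\cap\mathbb{B}$ to all of $\mathbb{B}$, and obtaining the sharp estimate $|T_a(q)|<1$ rather than the weak bound $\le 2$ that a naive triangle inequality in the Representation Formula would produce. Because the composition of slice regular functions is not slice regular in general, the Identity Principle cannot be invoked directly on $T_a\circ T_a-\mathrm{id}$. The cleanest way around this is to manipulate the explicit $\ast$-product expression for $T_a$, deriving an identity for $T_a(q)\overline{T_a(q)}$ in terms of $q\bar{q}$ that makes the sharp bound transparent, and then reducing the composition to a slice-by-slice computation via the Representation Formula combined with the axial symmetry of both $\mathbb{B}$ and $T_a$.
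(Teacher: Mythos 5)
Your handling of (ii) and (iii) is correct and essentially complete: since $a,\bar a\in\mathbb{C}(I)$, the $\ast$-product and $\ast$-inverse of polynomials with coefficients in $\mathbb{C}(I)$ reduce, on $\mathbb{B}_I$, to pointwise operations in the commutative field $\mathbb{C}(I)$, so $T_a|_{\mathbb{B}_I}$ is the classical disc automorphism $z\mapsto(a-z)/(1-\bar a z)$ and (iii) follows from the classical theory. For comparison, note that the paper itself gives no proof of this proposition at all; it is quoted from \cite{caterina}, so any argument here is necessarily your own.

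The proposal for (i), however, rests on a false identity. You plan to deduce bijectivity from $T_a\circ T_a=\mathrm{id}$ on all of $\mathbb{B}$, but that involution holds only on the distinguished slice $\mathbb{C}(I)$ --- which is exactly why the proposition restricts (iii) to $q\in\mathbb{C}(I)$. Concretely, for $a=\tfrac12 e_1$ one computes $T_a(q)=(1+\tfrac{q^2}{4})^{-1}\bigl(\tfrac{e_1}{2}-\tfrac34 q+q^2\tfrac{e_1}{2}\bigr)$, and for $q=te_2$ an expansion in $t$ shows that $T_a\bigl(T_a(te_2)\bigr)$ has $e_1$-component $\tfrac{t^2}{5}+O(t^4)\neq 0$, so $T_a\circ T_a\neq\mathrm{id}$ off $\mathbb{C}(e_1)$; neither surjectivity nor injectivity can be extracted this way. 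Your second obstacle is also not resolvable as stated: there is no identity expressing $T_a(q)\overline{T_a(q)}$ through $q\bar q$ alone, because $|T_a|$ is not constant on the spheres $x+y\,\mathbb{S}^2$. The missing tool for both points is the evaluation formula for the $\ast$-product, $(f\ast g)(q)=f(q)\,g\bigl(f(q)^{-1}qf(q)\bigr)$ when $f(q)\neq 0$. Applied to $(1-q\bar a)\ast T_a=a-q$ it gives $T_a(\sigma(q))=(1-q\bar a)^{-1}(a-q)=:\mathcal{M}_a(q)$ with $\sigma(q)=(1-q\bar a)^{-1}q(1-q\bar a)$; as in \cite{caterina}, $\sigma$ is a bijection of $\mathbb{B}$ preserving each sphere $x+y\,\mathbb{S}^2$ and the classical (non-regular) transformation $\mathcal{M}_a$ maps $\mathbb{B}$ bijectively onto itself, whence $T_a=\mathcal{M}_a\circ\sigma^{-1}$ is a bijection of $\mathbb{B}$ and $|T_a(q)|<1$ follows from $|\mathcal{M}_a|<1$ on $\mathbb{B}$. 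Either this argument or a direct citation of \cite{caterina} is needed to close (i); the slice-by-slice Representation Formula manipulation you sketch will not produce the sharp bound or the bijectivity on its own.
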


We note that the function $T_a$ is such that $T_a:\  \mathbb
B\cap\mathbb C_I\to \mathbb C_I$. Thus, using Proposition 2.9 in \cite{CGS},
we can give the following definition.

\begin{definition}
 Let $a\in\mathbb{B}\setminus\mathbb{R}$ and $\displaystyle I=\frac{\vec{a}}{\|\vec{a}\|}$, or
 $a\in\mathbb{R}$ and $I$ is any element in $\mathbb{S}^2$.
Let $T_a$ be the associated M\"obius transformation. For any $f\in \mathcal{SR}(\mathbb B) $ we define the function $f\circ_I T_a\in\mathcal {SR}(\mathbb B)$ as
$$      f\circ_IT_a:= P_I[ (  \  f_1    \    \circ \ {Q}_I[T_a]   \ ) ]   +  P_I[ (  \  f_2    \    \circ \  Q_I[T_a]    \ ) ]     j ,$$
where $ j \in \mathbb S^2$ with $j\perp I$ and $f_1,f_2\colon \mathbb B_I\to\mathbb C(I) $ are holomorphic functions such that
${Q}_I[f] = f_1+f_2j$.
\end{definition}

Using this notion of composition, we can prove a result on invariance under M\"obius transformation.

\begin{proposition} Let $a\in\mathbb{B}\setminus\mathbb{R}$ and $\displaystyle I=\frac{\vec{a}}{\|\vec{a}\|}$, or
 $a\in\mathbb{R}$ and $I$ is any element in $\mathbb{S}^2$.
Then for any $f\in \mathcal B$, one has  $ f\circ_I T_a \in \mathcal B $.
\end{proposition}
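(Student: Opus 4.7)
The plan is to reduce the quaternionic problem to the classical complex one on the slice $\mathbb{C}(I)$, where the Möbius invariance of the complex Bloch space is well known. By Proposition~\ref{prop_Bnorms} the norms $\|\cdot\|_{\mathcal B}$ and $\|\cdot\|_{\mathcal B_I}$ are equivalent, so it suffices to prove that $f\circ_I T_a\in\mathcal B_I$, that is, to bound $(1-|z|^2)|Q_I[f\circ_I T_a]'(z)|$ uniformly for $z\in\mathbb B_I$.

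First I would unwind the definition of $\circ_I$ on the slice $\mathbb B_I$. Pick $j\in\mathbb S^2$ with $j\perp I$ and write the splitting $Q_I[f]=f_1+f_2 j$ with $f_1,f_2\colon\mathbb B_I\to\mathbb C(I)$ holomorphic. Since $P_I\circ Q_I=\mathcal I_{\mathcal{SR}(\mathbb B)}$ and the functions $P_I[f_k\circ Q_I[T_a]]$ restrict on $\mathbb B_I$ to $f_k\circ Q_I[T_a]$, the definition of $f\circ_I T_a$ gives
\[
Q_I[f\circ_I T_a](z)=\bigl(f_1\circ Q_I[T_a]\bigr)(z)+\bigl(f_2\circ Q_I[T_a]\bigr)(z)\,j,\qquad z\in\mathbb B_I.
\]
By Proposition~\ref{Moeb_prop}(ii), $Q_I[T_a]$ is exactly the classical complex Möbius transformation $\varphi_a(z)=(a-z)(1-\bar a z)^{-1}$ of the disk $\mathbb B_I$ onto itself.

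Next I would invoke the complex case. By Remark~\ref{rem1}, the assumption $f\in\mathcal B$ (equivalently $f\in\mathcal B_I$) forces $f_1,f_2$ to lie in the classical complex Bloch space $\mathcal B_{\mathbb C}$ of $\mathbb B_I$. It is a standard fact (see e.g.\ \cite{kehe}) that $\mathcal B_{\mathbb C}$ is invariant under the Möbius self-maps of the disk: for any $g\in\mathcal B_{\mathbb C}$,
\[
(1-|z|^2)\bigl|(g\circ\varphi_a)'(z)\bigr|=(1-|\varphi_a(z)|^2)|g'(\varphi_a(z))|\le \|g\|_{\mathcal B_{\mathbb C}},
\]
the equality being the Schwarz--Pick identity $(1-|z|^2)|\varphi_a'(z)|=1-|\varphi_a(z)|^2$. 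Applying this to $g=f_1$ and $g=f_2$ and using
\[
|Q_I[f\circ_I T_a]'(z)|^2=|(f_1\circ\varphi_a)'(z)|^2+|(f_2\circ\varphi_a)'(z)|^2,
\]
together with the value at the origin $|Q_I[f\circ_I T_a](0)|\le|f_1(\varphi_a(0))|+|f_2(\varphi_a(0))|$, yields
\[
\|f\circ_I T_a\|_{\mathcal B_I}\le C\bigl(\|f_1\|_{\mathcal B_{\mathbb C}}+\|f_2\|_{\mathcal B_{\mathbb C}}\bigr)<\infty,
\]
for an absolute constant $C$. Hence $f\circ_I T_a\in\mathcal B_I$, and by the norm equivalence from Proposition~\ref{prop_Bnorms}, $f\circ_I T_a\in\mathcal B$.

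No step is really hard: the only point that needs care is the unwinding of $Q_I[f\circ_I T_a]$ on the slice $\mathbb B_I$, which relies on the identity $Q_I\circ P_I=\mathcal I$ for functions in $\mathrm{Hol}(\mathbb B_I)+\mathrm{Hol}(\mathbb B_I)j$ and on the fact, from Proposition~\ref{Moeb_prop}(ii), that $Q_I[T_a]$ preserves the slice $\mathbb B_I$ and agrees there with the classical disk automorphism. Once that reduction is made, Möbius invariance is simply inherited coordinate-wise from the complex Bloch theory.
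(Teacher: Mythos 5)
Your proof is correct and follows essentially the same route as the paper: reduce to the slice $\mathbb{B}_I$, where $Q_I[T_a]$ is the classical disk automorphism, invoke the Schwarz--Pick identity $(1-|z|^2)|\varphi_a'(z)|=1-|\varphi_a(z)|^2$ to obtain invariance of the Bloch seminorm on the slice, and then pass from $\mathcal{B}_I$ to $\mathcal{B}$. The only cosmetic difference is that you split $Q_I[f]$ into its holomorphic components $f_1,f_2$ and cite Proposition~\ref{prop_Bnorms} for the extension off the slice, whereas the paper works with the $\mathbb{H}$-valued restriction directly and reruns the Representation Formula estimate in place.
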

\begin{proof}
Let $f\in \mathcal B$ and let $I = \frac{\vec{a}}{\|\vec{a}\|}$, if $a\in \mathbb{R}$
then in the computations below we can use any $I\in \mathbb{S}$.  For $q= x_0 +i y \in \mathbb B$  let $z = x_0 + I y$. As  $g:=\frac{\partial}{\partial x_0}( f\circ_I T_a)$ is slice regular on $\mathbb B$, we can apply the Representation Formula. Thus, as $|q| = |z| = |\bar{z}|$, we have
\begin{align*}
 (1-|q|^2&)^2|g(q)|^2 =   (1-|q|^2)^2|\frac{1}{2}(1-iI) g(z) + \frac{1}{2}(1+iI)g(\bar{z})|^2
 \\
&= (1-|q|^2)^2\frac{1}{4}\left(|1-iI|^2|g(z)|^2 + 2\mathrm{Re}\left((1-iI)g(z)\overline{g(\bar{z})}(\overline{1+iI})\right)
+ | 1 + iI|^2|g(\bar{z})|^2 \right)
\\
&= \frac{1}{4}\left(|1-iI|^2(1-|z|^2)^2|g(z)|^2 + 2\mathrm{Re}\left((1-iI)(1-|z|^2)g(z)\overline{(1-|\bar{z}|^2)}\, \overline{g(\bar{z})}(\overline{1+iI})\right) \right.
\\
&\phantom{= \frac{1}{4}}\left.+ | 1 + iI|^2(1-|\bar{z}|^2)^2|g(\bar{z})|^2\right).
\end{align*}
Now let us denote
\begin{equation*}
  f'|_{\mathbb{C}_I}= \frac{\partial}{\partial x_0} Q_I[f] \qquad \text{and} \qquad T_a'|_{\mathbb{C}_I} = \frac{\partial}{\partial x_0}  Q_I[T_a].
\end{equation*}
Observe that $f'|_{\mathbb{C}_I}$ and $ T_a'|_{\mathbb{C}_I}$, defined above, depend on $I\in \mathbb{S}^2$, but we omit the subscript in the rest of the proof.
Then we have $g(z) = f'(T_a(z))T_a'(z)$ as $z\in\mathbb{C}(I)$. If we set $w = T_a(z)$,  we know that
\begin{equation*}
|1 - |w|^2| = (1-|z|^2)|T_a'(z)|
\end{equation*}
as the function ${Q}_I[T_a]$ is nothing but the complex M\"obius transformation on $\mathbb{B}_I$. Thus, if we put $w^{\ast} = T_a(\bar{z})$, we have
\begin{align*}
(1-|z|^2)|g(z)| &= (1-|z|^2)|f(T_a(z))||T_a'(z)| =  (1-|w|^2)|f'(w)| \\
(1-|\bar{z}|^2)|g(\bar{z})| &= (1-|\bar{z}|^2)|f(T_a(\bar{z}))||T_a'(\bar{z})| =  (1-|w^{\ast}|^2)|f'(w^{\ast})|.\\
\end{align*}
Plugging this into the above equation, we obtain
\begin{align*}
 (1-|q|^2)^2|g(q)|^2 &= \frac{1}{4}\Big[|1-iI|^2(1-|w|^2)^2|f'(w)|^2
 \\
 &+2\mathrm{Re}\left((1-Ii)(1-|w|^2)f'(w)\overline{(1-|w^{\ast}|^2)}\, \overline{g(w^{\ast})}(\overline{1+Ii})\right)
 \\
&\phantom{= \frac{1}{4}}+ | 1 + Ii|^2(1-|w^{\ast}|^2)^2|f'(w^{\ast})|^2\Big]\\
&= \frac{1}{4}\left|((1-iI)(1-|w|^2)f'(w)+ (1 + Ii)(1-|w^{\ast}|^2)f(w^{\ast})\right|^2\\
&\leq ((1-|w|^2)|f'(w)| + |(1-|w^{\ast}|^2)|f'(w^{\ast})|)^2.
\end{align*}
Recalling the definitions of $g$ and $f'$ we get
\begin{align*}
(1-|q|^2)\left|\frac{\partial}{\partial x_0}(f\circ_I T_a)(q) \right| &\leq (1-|w|^2)|f'(w)| + |(1-|w^{\ast}|^2)|f'(w^{\ast})| \\
&\leq 2\sup\left\{(1-|x|^2)|\frac{\partial f}{\partial x_0}(x)| \colon x\in\mathbb{B}_I\right\} <\infty.
\end{align*}
Therefore, $\| f\circ_I T_a\|_{\mathcal B_I}$ is bounded, that is $f\circ_I T_a\in\mathcal B_I$ and so $f\circ_I T_a\in\mathcal B$.

\end{proof}

The condition on a function in order to belong to the Bloch space
involves a first derivative. Here we show that it is equivalent to a
condition on the $n$-th derivative.

\begin{proposition}
For any $f\in \mathcal B$ and any $n\in \mathbb N$ with $n\geq 2$ the following inequality holds:
$$\sup \{   (1-|q|^2) ^n | \partial^n_{x_0} f (q)| \ \mid \ q \in \mathbb B \} \leq 2^{2n+2}(n-1)! \|f\|_{\mathcal B} .$$
Conversely,  if  $ f\in \mathcal {SR}(\mathbb B) $  for any $n\in \mathbb N$ with $n\geq 2 $ one has
$$
\sup \{   (1-|q|^2) ^n | \partial^n_{x_0} f (q)| \colon q \in \mathbb B \} < \infty
$$
then $f\in \mathcal B$.
\end{proposition}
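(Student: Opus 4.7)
The plan is to reduce both directions to the corresponding statements for the complex Bloch space on the unit disc, and then transport the estimates back to $\mathbb{B}$ using the Splitting Lemma and the Representation Formula, exactly as in the proof of Proposition \ref{prop_Bnorms}.

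For the forward direction, fix $i,j\in\mathbb{S}^2$ with $j\perp i$ and write $Q_i[f]=f_1+f_2j$ with $f_1,f_2\in Hol(\mathbb{B}_i)$. By Remark \ref{rem1}, $f\in\mathcal{B}_i$ is equivalent to $f_1,f_2$ lying in the classical complex Bloch space $\mathcal{B}_{\mathbb{C}}$, with $\|f_k\|_{\mathcal{B}_{\mathbb{C}}}\leq\|f\|_{\mathcal{B}_i}$. The standard complex estimate, obtained by applying the Cauchy integral formula to $f_k'$ on the disc of radius $r=(1-|z|)/2$ around $z$ and using the pointwise bound $|f_k'(w)|\leq\|f_k\|_{\mathcal{B}_{\mathbb{C}}}/(1-|w|^2)$, yields
\begin{equation*}
(1-|z|^2)^n|f_k^{(n)}(z)|\leq 2^{2n}(n-1)!\,\|f_k\|_{\mathcal{B}_{\mathbb{C}}}.
\end{equation*}
Since $|Q_i[f]^{(n)}(z)|^2=|f_1^{(n)}(z)|^2+|f_2^{(n)}(z)|^2$, summing gives
\begin{equation*}
(1-|z|^2)^n|Q_i[f]^{(n)}(z)|\leq 2^{2n+1}(n-1)!\,\|f\|_{\mathcal{B}_i}
\end{equation*}
for all $z\in\mathbb{B}_i$.

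To pass from the slice $\mathbb{B}_i$ to all of $\mathbb{B}$, observe that $\partial_{x_0}^n f$ is slice regular on $\mathbb{B}$, so the Representation Formula (Theorem \ref{formula}) applies. Writing $q=x+Iy$ and $z=x+iy$, and using $|1\pm Ii|\leq 2$ together with $|q|=|z|=|\bar z|$, exactly the same two-term estimate used in the proof of Proposition \ref{prop_Bnorms} gives
\begin{equation*}
(1-|q|^2)^n|\partial_{x_0}^n f(q)|\leq (1-|z|^2)^n|Q_i[f]^{(n)}(z)|+(1-|\bar z|^2)^n|Q_i[f]^{(n)}(\bar z)|,
\end{equation*}
which combined with the previous bound and $\|f\|_{\mathcal{B}_i}\leq\|f\|_{\mathcal{B}}$ produces the stated constant $2^{2n+2}(n-1)!$.

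For the converse, assume $M:=\sup_{q\in\mathbb{B}}(1-|q|^2)^n|\partial_{x_0}^n f(q)|<\infty$ for some fixed $n\geq 2$. Restricting to $\mathbb{B}_i$ and splitting $Q_i[f]=f_1+f_2j$, each $f_k$ is holomorphic on $\mathbb{B}_i$ and satisfies $|f_k^{(n)}(z)|\leq M/(1-|z|^2)^n$. Iteratively integrating $f_k^{(n)}$ along the segment from $0$ to $z$ and using $\int_0^{|z|}(1-t^2)^{-m}\,dt\lesssim(1-|z|)^{-(m-1)}$ for $m\geq 2$, one reduces the exponent by one at each step, so after $n-1$ integrations we obtain $(1-|z|^2)|f_k'(z)|\leq C$ for some constant $C$ (depending on $M,n$ and finitely many values $f_k^{(j)}(0)$). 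Hence $f_1,f_2\in\mathcal{B}_{\mathbb{C}}$, so $f\in\mathcal{B}_i$, and Proposition \ref{prop_Bnorms} gives $f\in\mathcal{B}$. The main subtlety is bookkeeping the constant $2^{2n+2}(n-1)!$ in the forward direction (in particular, the factor of $2$ picked up from $\|f_1\|_{\mathcal{B}_\mathbb{C}}+\|f_2\|_{\mathcal{B}_\mathbb{C}}\leq 2\|f\|_{\mathcal{B}_i}$ and the factor of $2$ from the Representation Formula step), while the converse requires essentially no new ideas beyond iterated integration.
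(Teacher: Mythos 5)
Your proof is correct and follows essentially the same route as the paper: split $Q_i[f]=f_1+f_2j$, apply the classical one-variable Bloch estimates to $f_1,f_2$, and transport the bound to all of $\mathbb{B}$ via the Representation Formula, with the same bookkeeping of the two factors of $2$ that produces $2^{2n+2}(n-1)!$. The only difference is that you prove the underlying complex facts (Cauchy estimates for the forward direction, iterated integration for the converse) where the paper simply cites Theorem~1 of \cite{danikas}, which makes your argument more self-contained but not substantively different.
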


\begin{proof}
Let  $f\in \mathcal B$ and $i, j\in \mathbb S^2$  with $j\perp i $. Then there exist holomorphic functions $f_1, f_2\colon\mathbb B_i\to\mathbb C(i)$ such that $ Q_i[f] = f_1+f_2 j$.
From Remark  \ref{rem1}, it follows that $f_1$ and $f_2$ lie in the complex Bloch space.
Due to Theorem 1 in \cite{danikas}, we have
\begin{align*}
\sup\{(1-|q|^2)^n |\partial_{x_0}^n f (q)& | \colon q\in \mathbb B_i\} \leq    \sup\{(1-|z|^2)^n |\partial_{x_0}^n f_1 (z) | \colon z\in \mathbb B_i\}
\\
&
+   \sup\{(1-|z|^2)^n |\partial_{x_0}^n f_2(z) | \colon z\in \mathbb B_i\}
\\
&\leq    (n-1)!2^{2n} \|f_1\|_{\mathcal B_{\mathbb C}}
+      (n-1)!2^{2n} \|f_2\|_{\mathcal B_{\mathbb C}}
 \\
 &
 \leq  (n-1)!2^{2n+1} \|f\|_{\mathcal B_i}
 \\
 &\leq  (n-1)!2^{2n+1} \|f\|_{\mathcal B}.
\end{align*}
Now let $z = x+Iy$ where $I\in\mathbb S^2$ and $q = x + i y$. As $|1+Ii|\leq 2$, $|1-Ii| \leq 2$, by applying the Representation Formula, one obtains
\begin{align*}
(1-|z|^2)^{n}|\partial_{x_0}^nf(z)|& =  (1-|q|^2)^{n}\big(\left|(1-Ii)\partial_{x_0}^nf(q) + (1+Ii)\partial_{x_0}^nf(\bar{q})\right|\big)
\\
&
\leq(1-|q|^2)^{n}|\partial_{x_0}^nf(q)| + (1-|\bar{q}|^2)^{n}|\partial_{x_0}^nf(\bar{q})|
\\
&
\leq 2\sup\{(1-|q|^2)^n |\partial_{x_0}^n f (q) | \colon q\in \mathbb B_i\}.
\end{align*}
Thus, one has
\begin{align*}
\sup\{(1-|z|^2)^n |\partial_{x_0}^n f (z) | \colon z\in \mathbb B\} &\leq 2\sup\{(1-|q|^2)^n |\partial_{x_0}^n f (q) | \colon q\in \mathbb B_i\}
\\
&
 \leq 2 (n-1)!2^{2n+1} \|f\|_{\mathcal B}.
\end{align*}
Conversely, if
$$
\sup \{   (1-|q|^2) ^n | \partial^n_{x_0} f (q)| \colon q \in \mathbb B \} < \infty,
$$
one has
$$     \sup \{   (1-|q|^2) ^n | \partial^n_{x_0} f_k (q)| \colon \ q \in \mathbb B \}  < \infty, \quad k=1,2$$ as $|\partial_{x_0}^nf_k(z)| \leq |\partial_{x_0}^nf(z)|, z\in\mathbb B$. But then, from  Theorem 1 in \cite{danikas}, it follows that $f_1,f_2$ belong to the complex Bloch space $\mathcal B_{\mathbb C}$ and so $f\in \mathcal B$.

\end{proof}

The condition on the derivatives studied in the above proposition translates
into a condition on the coefficients of the series expansion of a function in $\mathcal B$.

\begin{proposition} Let $f\in \mathcal B$ and let $(a_n)_{n\in\mathbb{N}} \subset\mathbb H$ such that
$$ f(q) = \sum_{n=0}^\infty q^n a_n.$$
Then $$ |a_n| \leq \frac{e}{\sqrt{2}} \|f\|_{\mathcal B},$$
for any $n\in\mathbb N\cup\{0\}$.
\end{proposition}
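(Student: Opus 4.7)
The plan is to reduce the coefficient estimate to the complex-variable one via the Splitting Lemma. Fix any $i\in\mathbb{S}^2$ and any $j\in\mathbb{S}^2$ with $j\perp i$. By the Splitting Lemma there exist holomorphic $f_1,f_2\colon\mathbb{B}_i\to\mathbb{C}(i)$ such that $Q_i[f]=f_1+f_2 j$. Writing $a_n=a_n^{(1)}+a_n^{(2)}j$ with $a_n^{(k)}\in\mathbb{C}(i)$ and substituting the power series for $f$ into the restriction $Q_i[f](z)=\sum_{n\geq 0} z^n a_n$, one identifies
$$ f_k(z)=\sum_{n=0}^\infty z^n a_n^{(k)}, \qquad k=1,2, $$
so that $a_n^{(1)}$ and $a_n^{(2)}$ are precisely the Taylor coefficients of $f_1$ and $f_2$.

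The next ingredient is the classical estimate for coefficients of complex Bloch functions: for every $g(z)=\sum_{n\geq 0} c_n z^n$ in $\mathcal{B}_{\mathbb{C}}$ one has $|c_n|\leq \frac{e}{2}\|g\|_{\mathcal{B}_{\mathbb{C}}}$ for all $n\in\mathbb{N}\cup\{0\}$. This is obtained by applying Cauchy's formula to $g'$: for $n\geq 1$ and $0<r<1$,
$$ |n\,c_n|\leq \frac{1}{r^{n-1}}\max_{|z|=r}|g'(z)|\leq \frac{1}{r^{n-1}(1-r^2)}\|g\|_{\mathcal{B}_{\mathbb{C}}}, $$
and minimising the right-hand side at $r^2=(n-1)/(n+1)$ yields the constant $\frac{n+1}{2n}\left(\frac{n+1}{n-1}\right)^{(n-1)/2}$, which is increasing in $n$ with limit $e/2$; the cases $n=0,1$ are immediate from $|c_0|=|g(0)|\leq \|g\|_{\mathcal{B}_{\mathbb{C}}}$ and $|c_1|=|g'(0)|\leq \|g\|_{\mathcal{B}_{\mathbb{C}}}$.

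It remains to bridge the norms. By Remark \ref{rem1}, $|Q_i[f]'(z)|^2=|f_1'(z)|^2+|f_2'(z)|^2$, so $|f_k'(z)|\leq |Q_i[f]'(z)|$ and $|f_k(0)|\leq |f(0)|$, giving $\|f_k\|_{\mathcal{B}_{\mathbb{C}}}\leq \|f\|_{\mathcal{B}_i}$ for $k=1,2$. Combining with the complex coefficient estimate produces $|a_n^{(k)}|\leq \frac{e}{2}\|f\|_{\mathcal{B}_i}$, and the quaternionic identity $|a_n|^2=|a_n^{(1)}|^2+|a_n^{(2)}|^2$ then yields
$$ |a_n|\leq \frac{e}{\sqrt{2}}\,\|f\|_{\mathcal{B}_i}\leq \frac{e}{\sqrt{2}}\,\|f\|_{\mathcal{B}}, $$
where the last step is Proposition \ref{prop_Bnorms}. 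The only real work lies in extracting the sharp complex constant $e/2$; the factor $1/\sqrt{2}$ in the quaternionic estimate is precisely the price of combining the two complex components of $Q_i[f]$.
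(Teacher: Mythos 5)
Your proof is correct and follows essentially the same route as the paper: split $Q_i[f]=f_1+f_2j$, identify the Taylor coefficients of $f_1,f_2$ with the components of $a_n$, apply the complex Bloch coefficient bound $|c_n|\leq \frac{e}{2}\|g\|_{\mathcal{B}_{\mathbb{C}}}$ together with $\|f_k\|_{\mathcal{B}_{\mathbb{C}}}\leq\|f\|_{\mathcal{B}_i}$, and combine via $|a_n|^2=|a_n^{(1)}|^2+|a_n^{(2)}|^2$ and Proposition \ref{prop_Bnorms}. The only difference is that you derive the complex estimate from the Cauchy inequalities, where the paper simply cites Theorem 2 of \cite{danikas}.
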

\begin{proof}
Let $i,j\in\mathbb S^2$ such that $i\perp j$, let $z\in\mathbb C(i)$ and let $f_1,f_2\colon \mathbb B_i\to \mathbb C(i)$ such that $f = f_1 + f_2 j$. Furthermore, for any $n\in\mathbb N$, let $\alpha_n, \beta_n \in  \mathbb C(i)$ such that $a_n = \alpha_{n} + \beta_{n}j$.
Then we have
$$f(z) = \sum_{n=0}^\infty z^n \alpha_n +   \sum_{n=0}^\infty z^n\beta_n j= f_1(z) + f_2 (z) j.$$
From Remark \ref{rem1} it follows that $f_1,f_2$ lie in the complex Bloch space on $\mathbb{B}_i$. Therefore, from Theorem 2, \cite{danikas} ,  it follows that for any $n\in\mathbb N$ one has
$$ |\alpha_n| \leq   \frac{e}{2} \|f_1\|_{\mathcal  B_{\mathbb{C}}}, \quad |\beta_n| \leq   \frac{e}{2} \|f_2\|_{\mathcal{B}_{\mathbb{C}}}$$
and so, as $\|f_k\|_{\mathcal{B}_{\mathbb{C}}} \leq \|f\|_{\mathcal{B}_{i}}, k=1,2$, one obtains
$$
|a_n|^2 = |\alpha_n|^2 + |\beta_n|^2 \leq \frac{e^2}{4} (\|f_1\|_{\mathcal  B_{\mathbb{C}}} ^2 + \|f_2\|_{\mathcal  B_{\mathbb{C}}}^2) \leq \frac{e^2}{2}\|f\|_{\mathcal{B}_{i}}^2 .
$$
Thus, by Proposition \ref{prop_Bnorms}, one concludes that
\begin{equation*}
\|a_n\| \leq \frac{e}{\sqrt{2}}\|f\|_{\mathcal B_i} \leq \frac{e}{\sqrt{2}}\| f\|_{\mathcal B}.
\end{equation*}

\end{proof}

Another property of the slice regular Bloch space is the following:
\begin{proposition}Let $f\in \mathcal {SR}(\mathbb B)$, let $(n_k)_{k\in\mathbb{N}}\subset\mathbb N\cup \{0\}$ and   $(a_{n_k})_{k\in\mathbb N} \subset\mathbb H$  be a sequence of quaternions such that
$$ f(q)= \sum_{k=0}^\infty q^{n_k} a_{n_k}.$$
If there exist constants $\alpha> 1$  and $M>0$ such that  and
\begin{equation}\label{equ2} \frac{n_{k+1}}{n_k} \geq \alpha, \quad   \quad  |a_{n_{k}} |\leq M, \quad  \quad \forall  k\in \mathbb N,\end{equation}
then $f\in \mathcal B$.
\end{proposition}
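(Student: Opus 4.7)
The plan is to reduce the statement to the corresponding classical fact about lacunary (Hadamard-gap) power series in the complex Bloch space, using the Splitting Lemma together with Proposition \ref{prop_Bnorms}. By that proposition it suffices to show $f\in\mathcal{B}_i$ for some (equivalently, every) $i\in\mathbb{S}^2$, and by Remark \ref{rem1} this in turn is equivalent to proving that the two complex components of $Q_i[f]$ lie in the classical complex Bloch space $\mathcal{B}_{\mathbb{C}}$.

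Concretely, I would fix $i,j\in\mathbb{S}^2$ with $j\perp i$ and decompose each coefficient as $a_{n_k}=\alpha_{n_k}+\beta_{n_k}j$ with $\alpha_{n_k},\beta_{n_k}\in\mathbb{C}(i)$. Restricting the series to $\mathbb{B}_i$ and grouping terms gives
\begin{equation*}
Q_i[f](z)=\sum_{k=0}^{\infty} z^{n_k}\alpha_{n_k}+\Bigl(\sum_{k=0}^{\infty} z^{n_k}\beta_{n_k}\Bigr)j=:f_1(z)+f_2(z)j,
\end{equation*}
where the two holomorphic functions $f_1,f_2\colon\mathbb{B}_i\to\mathbb{C}(i)$ are themselves lacunary power series. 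Since $|\alpha_{n_k}|^2+|\beta_{n_k}|^2=|a_{n_k}|^2\le M^2$, both sequences of complex coefficients are bounded by $M$, and the exponents $n_k$ satisfy the same Hadamard gap condition $n_{k+1}/n_k\ge\alpha>1$.

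The key step is then to invoke the classical theorem (see e.g. the reference \cite{danikas} already used earlier in the section, or the standard treatment in \cite{kehe}) stating that any holomorphic function on the unit disc whose Taylor series has Hadamard gaps and bounded coefficients belongs to the complex Bloch space. Applied to $f_1$ and $f_2$ this yields $f_1,f_2\in\mathcal{B}_{\mathbb{C}}$, hence by Remark \ref{rem1} we get $f\in\mathcal{B}_i$, and Proposition \ref{prop_Bnorms} finally gives $f\in\mathcal{B}$.

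I do not expect any serious obstacle: the only subtlety is to make sure that the rearrangement into $f_1+f_2j$ is legitimate, which follows from absolute (in fact uniform on compact subsets of $\mathbb{B}_i$) convergence of the power series, and that the gap/boundedness hypotheses transfer unchanged to the two complex components, which is immediate from $|\alpha_{n_k}|,|\beta_{n_k}|\le|a_{n_k}|$. Everything else is just the scalar lacunary Bloch theorem wrapped in the Splitting Lemma formalism.
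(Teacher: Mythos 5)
Your proposal is correct and follows essentially the same route as the paper: split $Q_i[f]=f_1+f_2j$ via the Splitting Lemma, observe that $f_1,f_2$ are lacunary series with coefficients bounded by $|a_{n_k}|\leq M$ and the same gap condition, apply the classical lacunary Bloch theorem (Theorem 4 in \cite{danikas}), and conclude via Remark \ref{rem1} and Proposition \ref{prop_Bnorms}. No issues.
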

\begin{proof}Let $i, j\in\mathbb S^{2}$ such that $i\perp j$ and let $a_{n_k} = \alpha_{n_k} + \beta_{n_k}j$, for $k \in\mathbb{N}$. Then $$Q_i[f](z)= \sum_{k=0}^\infty z^{n_k} \alpha_{n_k}   +    \sum_{k=0}^\infty z^{n_k} \beta_{n_k} j. $$
Note that the coefficients $\alpha_{n_k}$ and $\beta_{n_k}$ satisfy \eqref{equ2}. Thus by Theorem 4 in \cite{danikas} it follows, that the functions
$$
f_1(z) =  \sum_{k=0}^\infty z^{n_k} \alpha_{n_k}  , \quad  \quad f_2(z) =   \sum_{k=0}^\infty z^{n_k} \beta_{n_k},
$$
belong to the complex Bloch space $\mathcal B_{\mathbb C}$. Thus, $f\in\mathcal B_i$ and $f\in\mathcal{B}$ follows from Remark \ref{rem1} and Proposition \ref{prop_Bnorms}.

\end{proof}

As we pointed out before, the Bloch space $\mathcal B$ is not separable. In usual complex analysis, the little Bloch space $\mathcal{B}_{\mathbb{C}}^0$ of all functions $f\in \mathcal{B}_{\mathbb{C}}$ such that $\lim_{|z|\nearrow1}(1-|z|^2)f'(z) = 0$ is a separable subspace of
$\mathcal{B}_{\mathbb{C}}$ which is of interest on its own. Thus we give the following  definition.
\begin{definition}
The slice regular little Bloch space $\mathcal{B}^0$ is the space of all functions $f\in\mathcal B$ such that
 $$\lim_{|q|\nearrow1} (1-|q|^2) |\partial_{x_0} f(q)| =0 .$$
\end{definition}
\begin{remark}\label{remB0}
{\rm
Let $f\in \mathcal {SR}(\mathbb B) $ and  $i,j\in \mathbb S^2$ with $i\perp j$ and let $f_1, f_2\colon \mathbb B_i\to \mathbb C(i)$ be holomorphic functions such that $ Q_i[f] = f_1+f_2j$. Then one has
$$ |\partial_{x_0} f(z)|^2 =  |\partial_{x_0} f_1(z)|^2 +  |\partial_{x_0} f_2(z)|^2, \quad \forall z\in \mathbb B_i .$$
Due to the Representation Formula, this implies that     $f\in \mathcal{B}^0$  if and only if $f_1, \ f_2$ belong to the complex little Bloch space $\mathcal{B}_{\mathbb{C}}^0$.
}
\end{remark}

\begin{proposition}
The little Bloch space $\mathcal{B}^0$ is the closure with respect to $\|\cdot\|_{\mathcal B}$ of the set of quaternionic polynomials of the form
$$ P(q)= \sum_{k=0}^{n}q^ka_k,\ \ \ \ a_k\in \mathbb H.
$$
In particular $\mathcal{B}^0$ is separable.
\end{proposition}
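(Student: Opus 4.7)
The plan is to reduce everything to the classical complex result (density of polynomials in the complex little Bloch space, see e.g.\ \cite{kehe}) via the splitting isomorphism $Q_i/P_i$ and Remark \ref{remB0}.

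First I would verify the easy inclusion: every quaternionic polynomial $P(q)=\sum_{k=0}^{n}q^{k}a_{k}$ lies in $\mathcal{B}^{0}$. Indeed $\partial_{x_{0}}P$ is again a polynomial, hence bounded on $\overline{\mathbb{B}}$, so $(1-|q|^{2})|\partial_{x_{0}}P(q)|\to 0$ as $|q|\nearrow 1$. Next I would check that $\mathcal{B}^{0}$ is closed in $(\mathcal{B},\|\cdot\|_{\mathcal{B}})$: if $P_{n}\to f$ in $\|\cdot\|_{\mathcal{B}}$ with $P_{n}\in\mathcal{B}^{0}$, then for fixed $\varepsilon>0$ and $n$ large one has $(1-|q|^{2})|\partial_{x_{0}}f(q)|\le (1-|q|^{2})|\partial_{x_{0}}P_{n}(q)|+\varepsilon$, and the first term tends to $0$ as $|q|\nearrow 1$. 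Combining these two observations, the $\|\cdot\|_{\mathcal{B}}$-closure of the polynomials is contained in $\mathcal{B}^{0}$.

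For the reverse inclusion, fix $f\in\mathcal{B}^{0}$ and choose $i,j\in\mathbb{S}^{2}$ with $j\perp i$. Write $Q_{i}[f]=f_{1}+f_{2}j$ with $f_{1},f_{2}\in Hol(\mathbb{B}_{i})$. By Remark \ref{remB0}, both $f_{1}$ and $f_{2}$ belong to the complex little Bloch space $\mathcal{B}_{\mathbb{C}}^{0}$. Invoking the classical density theorem, I choose sequences of complex polynomials $p_{1}^{(n)}(z)=\sum_{k=0}^{d_{n}}z^{k}\alpha_{k}^{(n)}$ and $p_{2}^{(n)}(z)=\sum_{k=0}^{d_{n}}z^{k}\beta_{k}^{(n)}$ with $\|f_{\ell}-p_{\ell}^{(n)}\|_{\mathcal{B}_{\mathbb{C}}}\to 0$ for $\ell=1,2$. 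Setting
$$P_{n}(q):=P_{i}\!\left[p_{1}^{(n)}+p_{2}^{(n)}j\right](q)=\sum_{k=0}^{d_{n}}q^{k}\bigl(\alpha_{k}^{(n)}+\beta_{k}^{(n)}j\bigr),$$
one gets a quaternionic polynomial of the required form, and by Remark \ref{rem1} together with Proposition \ref{prop_Bnorms},
$$\|f-P_{n}\|_{\mathcal{B}}\;\le\;2\|f-P_{n}\|_{\mathcal{B}_{i}}\;\le\;2\bigl(\|f_{1}-p_{1}^{(n)}\|_{\mathcal{B}_{\mathbb{C}}}+\|f_{2}-p_{2}^{(n)}\|_{\mathcal{B}_{\mathbb{C}}}\bigr)\;\longrightarrow\;0.$$
This identifies $\mathcal{B}^{0}$ with the $\|\cdot\|_{\mathcal{B}}$-closure of the quaternionic polynomials. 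Separability then follows immediately by choosing the coefficients $a_{k}$ from the countable dense subset $\mathbb{Q}\oplus\mathbb{Q}e_{1}\oplus\mathbb{Q}e_{2}\oplus\mathbb{Q}e_{3}$ of $\mathbb{H}$: since the $\|\cdot\|_{\mathcal{B}}$-norm of a polynomial of bounded degree is controlled by its coefficients, the resulting countable family is dense in the polynomial space and hence in $\mathcal{B}^{0}$.

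The only delicate point is the transfer step through $P_{i}$, where one must ensure that the polynomial structure is preserved and that the norm estimate from the complex side carries over. This is guaranteed by the explicit action of $P_{i}$ on monomials ($P_{i}[z^{k}c]=q^{k}c$ for $c\in\mathbb{C}(i)$) and by the Pythagorean identity in Remark \ref{rem1}, so no extra work is needed beyond citing Proposition \ref{prop_Bnorms}.
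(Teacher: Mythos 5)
Your proof is correct and follows essentially the same route as the paper: split $Q_i[f]=f_1+f_2j$, invoke density of polynomials in the complex little Bloch space, lift back through $P_i$, and control $\|\cdot\|_{\mathcal B}$ by $2\|\cdot\|_{\mathcal B_i}$. You additionally supply the converse inclusion (polynomials lie in $\mathcal B^0$ and $\mathcal B^0$ is $\|\cdot\|_{\mathcal B}$-closed) and the rational-coefficient argument for separability, which the paper leaves implicit; these additions are sound.
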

\begin{proof}
Let $f\in\mathcal{B}^0$ and $i,j\in\mathbb{S}$ such that $i\perp j$. Furthermore, let $f_1, f_2\colon \mathbb B_i\to \mathbb C(i)$ be holomorphic functions such that $ Q_i[f] = f_1+f_2j$. As the set of complex polynomials is dense in the complex little Bloch space (see Corollary 5.10 in \cite{kehe}), by the above remark, there exist complex polynomials $p_{1,n}(z) = \sum_{k=0}^{n}z^k\alpha_{n,k}$ and $p_{2,n}(z) = \sum_{k=0}^{n} z^k\beta_{n,k}$ such that $\| f_k - p_{k,n}\|_{\mathcal B_{\mathbb C}} \to 0$ as $n\to\infty$ for $k=1,2$.
Let $a_{n,k} = \alpha_{n,k} + \beta_{n,k}j$ and $p_{n}(q) = \sum_{k=0}^n q^ka_{n,k}$. Then one has
\begin{align*}
\| p_n - f\|_{\mathcal B} \leq
 2\| p_n - f\|_{\mathcal B_{i}} &\leq   2(\|p_{1,n} - f_1\|_{\mathcal{B}_{\mathbb{C}}}+ \|p_{2,n} - f_2\|_{\mathcal{B}_{\mathbb{C}}}) \overset{n\to\infty}{\longrightarrow} 0.
\end{align*}
\end{proof}

 \begin{proposition} Let $f\in \mathcal B$, then  $f\in \mathcal B^0$ if and only if   one has that
 \begin{equation}
 \label{RcondH}
 \lim_{r \nearrow 1}  \|f_r - f\|_{\mathcal{B}}=0,
 \end{equation}
 where $f_r(q) = f(rq)$ for all $q\in\mathbb B$ and $r\in(0,1)$.
\end{proposition}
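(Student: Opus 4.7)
The plan is to reduce the assertion to its classical one-dimensional analogue by means of the Splitting Lemma, in the same spirit as several earlier proofs in this section. Fix $i,j\in\mathbb{S}^2$ with $j\perp i$ and write $Q_i[f] = f_1 + f_2 j$ with $f_1,f_2\colon\mathbb{B}_i\to\mathbb{C}(i)$ holomorphic. Since the map $q\mapsto rq$ leaves $\mathbb{B}_i$ invariant, $f_r$ is slice regular on $\mathbb{B}$ and its splitting is simply the dilation of the components: $Q_i[f_r] = (f_1)_r + (f_2)_r j$, where $(f_k)_r(z):=f_k(rz)$ is the standard complex dilation on the disc $\mathbb{B}_i$.

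The key technical ingredient I would record first is the two-sided inequality
\begin{equation*}
\max\bigl(\|g_1\|_{\mathcal{B}_{\mathbb{C}}},\|g_2\|_{\mathcal{B}_{\mathbb{C}}}\bigr) \leq \|g\|_{\mathcal{B}_i} \leq \|g_1\|_{\mathcal{B}_{\mathbb{C}}} + \|g_2\|_{\mathcal{B}_{\mathbb{C}}},
\end{equation*}
valid for every $g\in\mathcal{B}$ with splitting $g_1+g_2 j$. It follows at once from $|Q_i[g]'(z)|^2 = |g_1'(z)|^2 + |g_2'(z)|^2$, the analogous identity for $|g(0)|$, and the scalar bound $\max(a,b)\leq\sqrt{a^2+b^2}\leq a+b$. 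Combining this with the norm equivalence $\|\cdot\|_{\mathcal{B}}\sim\|\cdot\|_{\mathcal{B}_i}$ of Proposition \ref{prop_Bnorms} and applying it to $g=f_r-f$, the condition $\|f_r-f\|_{\mathcal{B}}\to 0$ becomes equivalent to the pair of complex convergences $\|(f_k)_r - f_k\|_{\mathcal{B}_{\mathbb{C}}}\to 0$ for $k=1,2$.

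At this stage I would invoke the classical complex characterization (see e.g. \cite{kehe}): a Bloch function $g$ on the complex unit disc lies in $\mathcal{B}_{\mathbb{C}}^0$ if and only if $\|g_r-g\|_{\mathcal{B}_{\mathbb{C}}}\to 0$ as $r\nearrow 1$. Applied to $f_1$ and $f_2$, this translates the previous equivalence into $\|f_r-f\|_{\mathcal{B}}\to 0$ iff $f_1,f_2\in\mathcal{B}_{\mathbb{C}}^0$, and Remark \ref{remB0} identifies the latter condition with $f\in\mathcal{B}^0$, closing the argument.

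The main obstacle is the norm dictionary supplied by the first step; once the two-sided estimate between $\|\cdot\|_{\mathcal{B}_i}$ and the pair of complex Bloch norms is in place, the rest is a transparent transfer of the classical complex statement through the Splitting Lemma. A small side verification is that $f_r$ itself belongs to $\mathcal{B}$ for $r\in(0,1)$, which is immediate from $\partial_{x_0}f_r(q)=r(\partial_{x_0}f)(rq)$ together with the monotonicity $1-|q|^2\leq 1-r^2|q|^2$.
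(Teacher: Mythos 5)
Your argument is correct and is essentially identical to the paper's: both split $f_r-f$ via the Splitting Lemma, sandwich $\|f_r-f\|_{\mathcal{B}}$ between the complex Bloch norms of the components using the norm equivalence of Proposition \ref{prop_Bnorms}, invoke the classical characterization of the complex little Bloch space, and conclude via Remark \ref{remB0}. The two-sided inequality you isolate is exactly the chain of estimates the paper writes inline, so no further comment is needed.
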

\begin{proof}  Let $f\in \mathcal  B$ and  $i,j\in \mathbb S^2$ with $i\perp j$ and let $f_1, f_2\colon \mathbb B_i\to \mathbb C(i)$ holomorphic functions such that $ Q_i[f] = f_1+f_2j$. Then, for $k=1,2$, one has
 \begin{equation*}
\|f_{k,r} - f_k\|_{\mathcal{B}_{\mathbb C}  }\leq \|f_r - f\|_{\mathcal{B}_i}\leq \|f_r - f\|_{\mathcal{B}} \leq 2\|f_r - f\|_{\mathcal{B}_i} \leq
2\|f_{1,r} - f_1\|_{\mathcal{B}_{\mathbb C}}+2\|f_{2,r} - f_2\|_{\mathcal{B}_{\mathbb C}}.
 \end{equation*}
Thus, \eqref{RcondH} is satisfied if and only if
\begin{equation*}\label{RcondC}
\lim_{r \nearrow 1}  \|f_{k,r} - f_k\|_{\mathcal{B}_{\mathbb C}}=0,\qquad k = 1,2,
\end{equation*}
where $f_{k,r}(z) = f_k(rz), k=1,2$ for all $z\in\mathbb B_i$ and $r\in(0,1)$. But this is equivalent to $f_1,f_2 \in\mathcal B_{\mathbb C}^0$, see Theorem 5.9 in \cite{kehe}. Thus, from Remark \ref{remB0}, it follows that $f\in \mathcal B^0$ if and only if \eqref{RcondH} holds.

\end{proof}

We conclude this section with a result that holds on a slice.

\begin{proposition} Let $f\in \mathcal B$ and let $i\in \mathbb{S}^2$ be fixed. Then for all $q,u\in \mathbb B_i$, one has
$$  |f (q)  - f(u)| \leq \|f\|_{\mathcal B}\, d(q,u), $$
where   $$d(q,u )= \frac{1}{2} \log\left(    \frac{  1  +   \frac{| q-u  |}{ |1- \bar q u|}           }{      1  -   \frac{| q-u  |}{ |1- \bar q u|}          } \right) .$$
\end{proposition}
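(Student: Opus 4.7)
The key observation is that both points $q$ and $u$ lie on a single slice $\mathbb{B}_i$, so the statement can be reduced to a purely one--dimensional complex question on the disk $\mathbb{B}_i \subset \mathbb{C}(i)$. Concretely, since $q,u\in\mathbb{B}_i$ we have $f(q) = Q_i[f](q)$ and $f(u) = Q_i[f](u)$, and $Q_i[f]$ is holomorphic on $\mathbb{B}_i$ in the sense that upon writing $Q_i[f] = f_1+f_2 j$ via the Splitting Lemma (for some $j\in\mathbb{S}^2$ with $j\perp i$), the components $f_1,f_2\colon \mathbb{B}_i\to\mathbb{C}(i)$ are genuine holomorphic functions of one complex variable.

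The plan is to integrate the derivative along a path in the slice. For any piecewise smooth curve $\gamma\colon[0,1]\to\mathbb{B}_i$ with $\gamma(0)=u$ and $\gamma(1)=q$, the fundamental theorem of calculus applied componentwise to $f_1$ and $f_2$ gives
\begin{equation*}
f(q)-f(u) \;=\; Q_i[f](q)-Q_i[f](u) \;=\; \int_0^1 Q_i[f]'(\gamma(t))\,\gamma'(t)\,dt.
\end{equation*}
Since $Q_i[f]'(z) = \frac{\partial f}{\partial x_0}(z)$ for $z\in\mathbb{B}_i$ (as noted in the remark after Definition \ref{BICONI}), and since $\mathbb{B}_i\subset\mathbb{B}$, the Bloch bound yields the pointwise estimate $(1-|z|^2)|Q_i[f]'(z)|\le \|f\|_{\mathcal{B}}$ for every $z\in\mathbb{B}_i$. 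Substituting this into the integral and taking absolute values gives
\begin{equation*}
|f(q)-f(u)| \;\leq\; \|f\|_{\mathcal{B}}\int_0^1 \frac{|\gamma'(t)|}{1-|\gamma(t)|^2}\,dt.
\end{equation*}

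The final step is to optimize over $\gamma$. The right--hand side is, by definition, the Poincar\'e length of $\gamma$ in the disk $\mathbb{B}_i$, and taking the infimum over all admissible paths yields the hyperbolic distance between $q$ and $u$. I would compute this infimum by applying the M\"obius automorphism of $\mathbb{B}_i$ that sends $u$ to $0$, which is an isometry for the Poincar\'e metric and sends $q$ to a point of modulus $\rho := |q-u|/|1-\bar q u|$; the problem then reduces to computing $\int_0^\rho dr/(1-r^2)$ along the radial segment, which equals $\tfrac{1}{2}\log\tfrac{1+\rho}{1-\rho}$, i.e.\ precisely $d(q,u)$ as stated.

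The steps are all standard; the only mild subtlety is ensuring that the M\"obius invariance of the integral, which is classical on $\mathbb{C}$, applies here. This is immediate because by Proposition \ref{Moeb_prop}\eqref{Moeb_red} the slice regular M\"obius transformation $T_u$ restricted to $\mathbb{B}_i$ (when $i$ is chosen compatibly, or after reducing to that case) coincides with the usual complex one, so no hypercomplex machinery is needed at this step.
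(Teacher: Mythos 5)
Your proof is correct, and it takes a genuinely different route from the paper's. The paper's own argument splits $Q_i[f]=f_1+f_2j$ via the Splitting Lemma, applies the known complex Bloch Lipschitz estimate (Theorem 3 in \cite{danikas}) to each component, and recombines using $|f(q)-f(u)|^2=|f_1(q)-f_1(u)|^2+|f_2(q)-f_2(u)|^2$ together with $\|f_k\|_{\mathcal B_{\mathbb C}}\le\|f\|_{\mathcal B_i}\le\|f\|_{\mathcal B}$; note that this chain of inequalities actually only yields $|f(q)-f(u)|\le\sqrt{2}\,\|f\|_{\mathcal B}\,d(q,u)$, so the paper's proof loses a factor $\sqrt{2}$ relative to the stated constant. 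Your direct argument --- integrate the derivative of $Q_i[f]$ along a path $\gamma$ in the slice, insert the pointwise Bloch bound $(1-|z|^2)|Q_i[f]'(z)|\le\|f\|_{\mathcal B}$ (valid since $\mathbb B_i\subset\mathbb B$ and $Q_i[f]'=\partial_{x_0}f$ on the slice), and identify $\inf_\gamma\int_0^1 \frac{|\gamma'(t)|}{1-|\gamma(t)|^2}\,dt$ with the hyperbolic distance $\frac12\log\frac{1+\rho}{1-\rho}$, $\rho=|q-u|/|1-\bar qu|$ --- is self-contained, avoids the cited complex theorem, and delivers the sharp constant $1$ exactly as stated. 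One small point of care: the single quaternionic integral $\int_0^1 Q_i[f]'(\gamma(t))\gamma'(t)\,dt$ is ambiguous as written, because $j$ anticommutes with the imaginary part of $\mathbb C(i)$ (one has $jz=\bar z j$); the clean formulation is to apply the fundamental theorem of calculus to $f_1$ and $f_2$ separately and then observe that $\bigl|\tfrac{d}{dt}Q_i[f](\gamma(t))\bigr|=|Q_i[f]'(\gamma(t))|\,|\gamma'(t)|$ since $|a+bj|^2=|a|^2+|b|^2$ for $a,b\in\mathbb C(i)$. This is exactly what your absolute-value step needs, so the argument goes through; the Schwarz--Pick invariance of the density $\frac{1}{1-|z|^2}$ and the radial geodesic computation you sketch are classical and require, as you note, only the restriction of $T_u$ to the slice, where it is the ordinary complex M\"obius map.
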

\begin{proof}
If $q$ and $u$ lie on the same complex plane, i.e.  $ q,u \in \mathbb B_i$ for some $i\in\mathbb S^2$,  then again for $j\in\mathbb S$ with $j\perp i$ there exist holomorphic functions $f_1,f_2\colon \mathbb B_i\to \mathbb C(i)$ such that $\mathcal{Q}_i[f] = f_1 + f_2j$. Thus, as $\|f_k\|_{\mathcal{B}_{\mathbb{C}}} \leq \|f\|_{\mathcal{B}_{i}}, k=1,2$ we have
\begin{align*}
|f(q) - f(u)|^2 &=|f_1(q) - f_1(u)|^2  + |f_2(q) - f_2(u)|^2
\\
&
\leq
\|f_1\|_{\mathcal B_{\mathbb{C}}}^2  d(q,u)^2+ \|f_2\|_{\mathcal B_{\mathbb{C}}}^2  d(q,u)^2
 \\
&
\leq 2 \|f\|_{\mathcal B_{i}}^2   d(q,u)^2
\\
&
\leq  2 \|f\|_{\mathcal B}^2   d(q,u)^2
\end{align*}
in which we have used the Theorem 3 in \cite{danikas}.
\end{proof}

\section{Weighted Bergman spaces}\label{WBSpaces}

Bergman spaces in the slice hyperholomorphic setting have been studied in  \cite{cglss,CGS,CGS3}. Here we deepen this study and we introduce weighted slice regular Bergman spaces.\\
Let $dA$ be the area measure on the unit ball of the complex plane $\mathbb D$, normalized
so that the area 0f $\mathbb D$ is $1$, i.e. $dA = \frac{1}{\pi}dxdy$. For $\alpha>-1$ let $dA_{\alpha} = (\alpha+1)(1-|z|^2)^{\alpha}dA(z)$.

Then the complex Bergman space $\berg_{\alpha,\mathbb C}^{p}$  is defined as the space of all holomorphic functions $f$ on $\mathbb D$ such that $f\in L^p(\mathbb D, dA_{\alpha})$. With $\|\cdot\|_{p,\alpha}$ we will note the $L^p$-norm on $\mathbb D$ with respect to $dA_{\alpha}$.
We begin with the following definitions.
\begin{definition}\label{4.1bis}
For $i\in\mathbb S^2$ let $dA_i$ be the normalized differential of area in the plane $\mathbb C(i)$ such that the area of $\mathbb B_i$ is equal to  $1$. Moreover, for $\alpha>-1$ let
\begin{equation*}
dA_{\alpha,i}(z) = (\alpha + 1)(1-|z|^2)^{\alpha}dA_i(z).
\end{equation*}
Then, for $ p >0$,   we define the {\em weighted slice regular Bergman space $\berg_{\alpha}^{p}$} as
the quaternionic right linear space of all slice regular functions on $\mathbb{B}$ such that
\begin{equation*}
\sup_{i\in\mathbb S^2}\int_{\mathbb B_i}|f(z)|^pdA_{\alpha,i}(z) < \infty.
\end{equation*}
Furthermore, for each function $f\in\berg_{\alpha}^p$ we define
$$ \| f \|_{p,\alpha} = \sup_{i\in\mathbb S^2}\left(\int_{\mathbb B_i}|f(z)|^p dA_{\alpha,i}(z)\right)^{\frac{1}{p}}.$$
\end{definition}
\begin{definition}\label{4.1}
For $i\in\mathbb S^2$, $ p >0$,   we define  $\berg_{\alpha,i}^{p}$ as
the quaternionic right linear space of all slice regular functions on $\mathbb{B}$ such that
\begin{equation*}
\int_{\mathbb B_i}|f(z)|^pdA_{\alpha,i}(z) < \infty,
\end{equation*}
that is $\berg_{\alpha,i}^{p} = \mathcal{SR}(\mathbb B)\cap L^p(\mathbb B, dA_{\alpha,i})$.
Furthermore, for each function $f\in\berg_{\alpha,i}^p$ we define
$$ \| f \|_{p,\alpha,i} = \left(\int_{\mathbb B_i}|f(z)|^p dA_{\alpha,i}(z)\right)^{\frac{1}{p}}.$$
\end{definition}
As we shall see  in the next section, the weighted slice regular Bergman spaces are related with the Besov spaces.
\begin{remark}\label{bergmanf1f2}
{\rm
Let $j\in\mathbb S^2$ be such that $j\perp i$. Then there exist holomorphic functions $f_1,f_2\colon\mathbb B_i\to\mathbb C(i)$ such that
$ Q_i[f] = f_1 + f_2j$. Furthermore, for all $z\in\mathbb B_i$, we have
\begin{equation}\label{eq_bergmanf1f2}| f_k(z)|^p\leq| f(z)|^p \leq 2^{\max\{0,p-1\}}\left(|f_1(z)|^p+ | f_2(z)|^p\right)\qquad k=1,2.\end{equation}
Here, the first inequality is trivial.  For $p>1$, the second one follows from the convexity of the function $x\mapsto x^p$ on $\mathbb R^+$, in fact

\begin{align*}
|f(z)|^p &\leq 2^p\left(\frac{1}{2}|f_1(z)| + \frac{1}{2}|f_2(z)|\right)^p
\\
& \leq2^p\left(\frac{1}{2}|f_1(z)|^p + \frac{1}{2}|f_2(z)|^p\right)
\\
&
 = 2^{p-1}\left(|f_1(z)|^p+ | f_2(z)|^p\right).
\end{align*}

On the other hand, for $0<p\leq1$ and $a,b\geq0$, the inequality $(a+b)^p\leq a^p + b^p$ holds. Therefore, in this case, we have
$$ |f(z)|^p \leq (|f_1(z)| + |f_2(z)|)^p \leq |f_1(z)|^p + |f_2(z)|^p.$$
Note that the inequality  \eqref{eq_bergmanf1f2} implies that $f$ is in $\berg^p_{\alpha,i}$ if and only if $f_1$ and $f_2$ are in $\berg^p_{\alpha, \mathbb C}$.
}
\end{remark}
Moreover we have:

\begin{proposition}\label{proberg}
Let $i,j\in\mathbb S^2$, let $p>0$ and  $\alpha >-1$ and $f\in\mathcal{SR}(\mathbb B)$.  Then $f\in\berg^p_{\alpha,i}$ if and only if $\berg^p_{\alpha,j}$.
\end{proposition}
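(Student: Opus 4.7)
The plan is to use the Representation Formula (Theorem~\ref{formula}) to transfer the $p$-th power integral on the slice $\mathbb{B}_i$ to the analogous integral on $\mathbb{B}_j$, exploiting the fact that both disks share the natural $(x,y)$-parametrization on which the weight $(1-|z|^2)^\alpha$ and the area element coincide.

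Concretely, I would fix $z = x+iy \in \mathbb{B}_i$ and, applying Theorem~\ref{formula} with the chosen $j \in \mathbb{S}^2$, write
$$f(x+iy) = \tfrac{1}{2}(1+ij)f(x-jy) + \tfrac{1}{2}(1-ij)f(x+jy).$$
Since $|1\pm ij|\leq 2$, the triangle inequality gives $|f(x+iy)| \leq |f(x-jy)| + |f(x+jy)|$. Next, applying the elementary estimate $(a+b)^p \leq C_p(a^p + b^p)$ with $C_p = 2^{\max\{0,p-1\}}$ — valid for all $p>0$, the case $p \leq 1$ being subadditivity of $t \mapsto t^p$ on $[0,\infty)$ and the case $p>1$ being convexity, exactly as in Remark~\ref{bergmanf1f2} — one arrives at the pointwise bound
$$|f(x+iy)|^p \leq C_p\bigl(|f(x-jy)|^p + |f(x+jy)|^p\bigr).$$

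Then I would integrate this inequality against $dA_{\alpha,i}(z) = (\alpha+1)(1-|z|^2)^\alpha dA_i(z)$. In the coordinates $(x,y)$ with $x^2+y^2<1$, both $dA_i(x+iy)$ and $dA_j(x+jy)$ equal $\tfrac{1}{\pi}dx\,dy$, and $|x+iy|^2 = x^2+y^2 = |x\pm jy|^2$, so the weight $(1-\,\cdot\,)^\alpha$ agrees under the identification. A change of variable $y \mapsto -y$ (which preserves both $dx\,dy$ and the weight factor) in the first summand then yields
$$\int_{\mathbb{B}_i} |f(z)|^p\, dA_{\alpha,i}(z) \leq 2C_p \int_{\mathbb{B}_j} |f(w)|^p\, dA_{\alpha,j}(w).$$
Thus $f \in \berg^p_{\alpha,j}$ forces $f \in \berg^p_{\alpha,i}$ with $\|f\|_{p,\alpha,i}^p \leq 2C_p \|f\|_{p,\alpha,j}^p$, and the reverse implication follows by swapping $i$ and $j$.

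The argument is essentially a direct consequence of the Representation Formula and a trivial change of variable; the only mild obstacle is getting the right constant $C_p$ uniformly in $p > 0$, which requires treating $p \leq 1$ via subadditivity rather than convexity, as noted above.
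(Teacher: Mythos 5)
Your proof is correct and follows essentially the same route as the paper: the Representation Formula plus the triangle inequality, the elementary bound $(a+b)^p\leq 2^{\max\{0,p-1\}}(a^p+b^p)$, and the change of variables $\bar z\mapsto z$ to merge the two integrals, arriving at the same constant $2^{\max\{p,1\}}$. The only cosmetic difference is the direction (you bound the $\mathbb{B}_i$-integral by the $\mathbb{B}_j$-integral, the paper does the reverse), which is immaterial since both conclude by symmetry in $i$ and $j$.
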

\begin{proof}
Let $f\in\mathcal{SR}(\mathbb B)$ and let $w = x+yj\in\mathbb B_j$,  $z=x+yi\in\mathbb B_i$. Note that $|z|=|w|$. Then,  the Representation Formula implies
$$   | f (w)|  = \frac{1}{2}\left|(1 + ji)  f (z)      +    (1  - j i)  f (\bar z)\right| \leq \left|  f (z) \right|     +    \left|  f (\bar z)\right|. $$
This yields
\begin{gather*}  \int_{\mathbb B_j}  |  f(w)  |^p  (\alpha +1) (1-|w|^2)^{\alpha}\  d A_j (w)  \\
 \leq      2^{\max\{p-1,0\}}  (\alpha +1) \left(  \int_{\mathbb B_i} \  |   ( 1-|z|^2) ^{\alpha}   f (z)  |^p  \ d A_i (z)   +
 \int_{\mathbb B_i} \  |   ( 1-|z|^2) ^{\alpha} f( \bar z)  |^p  \ d A_i  (z)  \right).    \end{gather*}
Moreover, the change of  coordinates $\bar z \to z $ gives
$$ \int_{\mathbb B_i} \  |   ( 1-|z|^2) ^{\alpha} f(\bar z)  |^p  \ dA_i (z)  =
 \int_{\mathbb B_i} \  |   ( 1-|z|^2) ^{\alpha}   f( z)  |^p  \ d A_i (z),$$
and so
$$  \int_{\mathbb B_j}  |   ( 1-|w|^2) ^{\alpha} f(w)  |^p  (\alpha +1)\ dA_j  (w)    \leq        2^{\max\{p,1\}}\int_{\mathbb B_i} |   ( 1-|z|^2) ^{\alpha}   f(z)  |^p  (\alpha +1)\ d A_i  (z). $$
Thus, for any $f\in\berg^p_{\alpha,i}$ we have that $f\in \berg^p_{\alpha,i}$. By exchanging the roles of $i$ and $j$, we obtain the other inclusion.
\end{proof}
Next results describe the growth of slice regular functions in the ball and, in particular, of functions belonging to the weighted Bergman spaces.
\begin{proposition}
Let $p>0$, let $i\in\mathbb S^2$ and let $f\in \mathcal {SR}(\mathbb B)$.
\begin{enumerate}[(i)]
\item For any $0<r<1$ we have $$\displaystyle |f(0)|^p\leq \frac{2^{\max\{p,1\}}}{ 2\pi}   \int_0^{2\pi} |f(re^{i\theta})|^p d\theta. $$
\item For $\alpha > -1$ we have $$\displaystyle |f(0)|^p\leq 2^{\max\{p,1\}}   \int_{\mathbb{B}_i} |f(z)|^p d A_{ i, \alpha }(z). $$
\end{enumerate}
\end{proposition}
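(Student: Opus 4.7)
The plan is to reduce both estimates to known classical facts via the Splitting Lemma and the inequalities collected in Remark \ref{bergmanf1f2}. Fix $j\in\mathbb{S}^2$ with $j\perp i$ and split $Q_i[f] = f_1 + f_2 j$, where $f_1,f_2\colon\mathbb{B}_i\to\mathbb{C}(i)$ are holomorphic. Since $f_1$ and $f_2$ are holomorphic and $p>0$, both $|f_1|^p$ and $|f_2|^p$ are subharmonic on $\mathbb{B}_i$; the classical sub-mean-value inequality then yields
\begin{equation*}
|f_k(0)|^p \leq \frac{1}{2\pi}\int_0^{2\pi}|f_k(re^{i\theta})|^p\, d\theta,\qquad k=1,2,
\end{equation*}
for every $0<r<1$.

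For (i), I would combine the two inequalities from Remark \ref{bergmanf1f2} with the above. Specifically, the first of those inequalities gives $|f(0)|^p \leq 2^{\max\{0,p-1\}}(|f_1(0)|^p + |f_2(0)|^p)$, and the second gives $|f_k(re^{i\theta})|^p \leq |f(re^{i\theta})|^p$. Chaining these,
\begin{equation*}
|f(0)|^p \leq \frac{2^{\max\{0,p-1\}}}{2\pi}\int_0^{2\pi}\bigl(|f_1(re^{i\theta})|^p + |f_2(re^{i\theta})|^p\bigr)d\theta \leq \frac{2\cdot 2^{\max\{0,p-1\}}}{2\pi}\int_0^{2\pi}|f(re^{i\theta})|^p d\theta,
\end{equation*}
and the desired factor appears via the identity $1+\max\{0,p-1\}=\max\{1,p\}$.

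For (ii), I would pass to polar coordinates on $\mathbb{B}_i$ so that $dA_{\alpha,i}(z) = \frac{\alpha+1}{\pi}(1-r^2)^{\alpha} r\, dr\, d\theta$, apply (i) slice-by-slice in $r$, and use the elementary integral $\int_0^1(1-r^2)^{\alpha} r\, dr = \frac{1}{2(\alpha+1)}$ (which converges precisely because $\alpha>-1$):
\begin{equation*}
\int_{\mathbb{B}_i}|f(z)|^p\, dA_{\alpha,i}(z) \geq \frac{2(\alpha+1)|f(0)|^p}{2^{\max\{p,1\}}}\int_0^1(1-r^2)^{\alpha}r\, dr = \frac{|f(0)|^p}{2^{\max\{p,1\}}}.
\end{equation*}

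Neither step is genuinely difficult; the only point that requires care is bookkeeping the constants, in particular the identity $\max\{p-1,0\}+1=\max\{p,1\}$ that absorbs the extra factor of $2$ coming from the pointwise bound $|f_1|^p+|f_2|^p \leq 2|f|^p$. The structural idea is simply that every quaternionic growth estimate on $\mathbb{B}_i$ reduces, through the splitting, to the classical subharmonic mean-value inequality for the scalar components.
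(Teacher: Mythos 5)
Your proof is correct and follows essentially the same route as the paper: split $Q_i[f]=f_1+f_2j$, apply the classical sub-mean-value inequality to the holomorphic components, and recombine using the inequalities of Remark \ref{bergmanf1f2}, with the constant arising exactly from $1+\max\{0,p-1\}=\max\{1,p\}$. The only (harmless) divergence is in part (ii): the paper invokes the weighted analogue for holomorphic functions (Lemma 4.12 in \cite{kehe}) componentwise, whereas you derive (ii) from (i) by integrating in polar coordinates against the weight, which is slightly more self-contained and equally valid since $\int_0^1(1-r^2)^\alpha r\,dr=\frac{1}{2(\alpha+1)}$ for $\alpha>-1$.
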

\begin{proof}
Let $j \in\mathbb{S}^2$ such that $i\perp j$ and let $f_1,f_2\colon\mathbb B_i\to\mathbb C(i)$ be holomorphic functions such that $ Q_i[f] = f_1 + f_2j$. Then, as a direct consequence of the Lemma 4.11 in \cite{kehe} and of the inequality \eqref{eq_bergmanf1f2} in the previous remark, one obtains
\begin{align*}
 |   f(0) |^p &  \leq 2^{\max\{p-1,0\}}\left(   |f_1(0)|^p  +    |f_2(0)|^p \right) \\
&\leq 2^{\max\{p-1,0\}}\left(\frac{1}{2\pi}\int_{0}^{2\pi}|f_1(re^{i\theta})|^p d\theta + \frac{1}{2\pi}\int_{0}^{2\pi}|f_2(re^{i\theta})|^p d\theta\right)\\
& \leq \frac{2^{\max\{p,1\}}}{ 2\pi}   \int_0^{2\pi} |f(re^{i\theta})|^p d\theta.
\end{align*}
The second inequality follows in the same way from the analogous result for holomorphic functions, Lemma 4.12 in \cite{kehe}.

\end{proof}

\begin{proposition} Let $p>0$ and $\alpha > - 1$ and let $f\in\berg^p_{\alpha,i}$ with $i\in\mathbb S^2$. Then for any $z\in\mathbb B_i$ we have
 \begin{equation}\label{eqprop4.6}
 |f(z) | \leq \frac{2\| f \|_{p,\alpha, i} }{(1-|z|^2)^{\frac{(2+\alpha)}{p} }}.
\end{equation}
\end{proposition}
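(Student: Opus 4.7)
The plan is to reduce the estimate to the corresponding fact about complex weighted Bergman functions on $\mathbb{B}_i$ via the Splitting Lemma, and then combine the two complex estimates.

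First I would fix $j\in\mathbb{S}^2$ with $j\perp i$ and apply the Splitting Lemma to write $Q_i[f] = f_1 + f_2 j$, where $f_1,f_2\colon \mathbb{B}_i\to\mathbb{C}(i)$ are holomorphic. By the first inequality in Remark \ref{bergmanf1f2}, $|f_k(z)|^p \leq |f(z)|^p$ pointwise on $\mathbb{B}_i$, so integrating against $dA_{\alpha,i}$ immediately yields
\begin{equation*}
\|f_k\|_{p,\alpha,\mathbb{C}} \leq \|f\|_{p,\alpha,i},\qquad k=1,2,
\end{equation*}
where $\|\cdot\|_{p,\alpha,\mathbb{C}}$ denotes the standard weighted complex Bergman norm on $\mathbb{B}_i\cong\mathbb{D}$. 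In particular $f_1,f_2$ belong to $\berg^p_{\alpha,\mathbb{C}}$.

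Next I would invoke the classical pointwise growth estimate for the complex weighted Bergman space (a standard consequence of the sub-mean-value property applied on a suitable pseudohyperbolic disk centered at $z$; see e.g.\ Theorem 4.14 in \cite{kehe}), which gives, for each $z\in\mathbb{B}_i$,
\begin{equation*}
|f_k(z)| \leq \frac{\|f_k\|_{p,\alpha,\mathbb{C}}}{(1-|z|^2)^{(2+\alpha)/p}},\qquad k=1,2.
\end{equation*}
Combining these with the elementary inequality $|f(z)| \leq |f_1(z)| + |f_2(z)|$ and the norm bound from the previous paragraph yields
\begin{equation*}
|f(z)| \leq \frac{\|f_1\|_{p,\alpha,\mathbb{C}} + \|f_2\|_{p,\alpha,\mathbb{C}}}{(1-|z|^2)^{(2+\alpha)/p}} \leq \frac{2\|f\|_{p,\alpha,i}}{(1-|z|^2)^{(2+\alpha)/p}},
\end{equation*}
which is exactly \eqref{eqprop4.6}.

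There is essentially no obstacle here once one accepts the complex Bergman space pointwise bound; the only care needed is to make sure that the complex growth estimate on $\mathbb{B}_i$ is applied with the same normalization of area measure as in Definition \ref{4.1bis} (namely $dA_i$ normalized so that $\mathbb{B}_i$ has area one), which is exactly the normalization under which the cited complex result is stated in \cite{kehe}. Using $|f|\leq |f_1|+|f_2|$ rather than $|f|^2 = |f_1|^2 + |f_2|^2$ is what produces the constant $2$ in the statement (the latter would give the sharper constant $\sqrt{2}$, but $2$ suffices).
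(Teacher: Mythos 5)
Your proof is correct and follows essentially the same route as the paper: split $Q_i[f]=f_1+f_2j$, bound $\|f_k\|_{p,\alpha}\leq\|f\|_{p,\alpha,i}$ via the pointwise inequality $|f_k|\leq|f|$, apply the complex weighted Bergman growth estimate (Theorem 4.14 in \cite{kehe}) to each component, and combine with $|f|\leq|f_1|+|f_2|$. Your added remarks on the normalization of $dA_i$ and on the provenance of the constant $2$ are accurate but not needed beyond what the paper does.
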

\begin{proof}
  Let $z\in\mathbb B$ and $j \in\mathbb{S}^2$ such that $i\perp j$ and let $f_1,f_2\colon\mathbb B_i\to\mathbb C(i)$ be holomorphic functions such that $ Q_i[f] = f_1 + f_2j$. Then, by Theorem 4.14 in \cite{kehe}, we have
\begin{align*}
|f(z)|& \leq |f_1(z)| + |f_2(z)|
\\
&
\leq \frac{\| f_1\|_{p,\alpha}}{(1-|z|^2)^{\frac{2+\alpha}{p}}} + \frac{\| f_2\|_{p,\alpha}}{(1-|z|^2)^{\frac{2+\alpha}{p}}}
\\
&
\leq \frac{2\| f\|_{p,\alpha,i}}{(1-|z|^2)^{\frac{2+\alpha}{p}}}.
\end{align*}
\end{proof}
\begin{corollary}
Let $p>0$ and $\alpha > - 1$ and let $f\in\berg^p_{\alpha}$. Then
$$ |f(q) | \leq  \frac{4\, \| f \|_{p,\alpha} }{(1-|q|^2)^{\frac{(2+\alpha)}{p} }}
 $$
for all $q\in\mathbb B$.
\end{corollary}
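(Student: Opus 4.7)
The plan is to reduce the estimate on $\mathbb{B}$ to the slice-wise estimate of the previous proposition by means of the Representation Formula. Fix $q\in\mathbb{B}$ and write $q=x+Iy$ for some $I\in\mathbb{S}^2$ (if $q$ is real, take $I$ arbitrary). Pick any $j\in\mathbb{S}^2$, for instance one with $j\perp I$. Since $f\in\berg_{\alpha}^{p}$, by Proposition~\ref{proberg} (and the definition of $\|f\|_{p,\alpha}$) we have $f\in\berg_{\alpha,j}^{p}$ with $\|f\|_{p,\alpha,j}\leq\|f\|_{p,\alpha}$.

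First I would apply the Representation Formula to express $f(q)$ in terms of values on the slice $\mathbb{C}(j)$:
$$f(q)=\tfrac{1}{2}(1+Ij)\,f(x-yj)+\tfrac{1}{2}(1-Ij)\,f(x+yj).$$
Using the elementary bounds $|1\pm Ij|\leq 2$ and the triangle inequality, this yields
$$|f(q)|\leq |f(x-yj)|+|f(x+yj)|.$$

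The key observation is then that both points $x\pm yj$ lie in $\mathbb{B}_j$ and satisfy $|x+yj|=|x-yj|=|q|$. Hence I can apply estimate \eqref{eqprop4.6} of the previous proposition twice on the slice $\mathbb{B}_j$, getting
$$|f(x\pm yj)|\leq\frac{2\,\|f\|_{p,\alpha,j}}{(1-|q|^2)^{(2+\alpha)/p}}.$$
Summing the two inequalities and using $\|f\|_{p,\alpha,j}\leq\|f\|_{p,\alpha}$ gives the desired bound with constant $4$.

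There is no real obstacle here: the Representation Formula together with the slice estimate does essentially all the work, and the only point that needs to be checked carefully is the equality $|x+yj|=|x-yj|=|q|$, which is precisely what allows a common denominator $(1-|q|^2)^{(2+\alpha)/p}$ to appear on both terms. The factor of $4$ is the product of the factor $2$ coming from $|1\pm Ij|\leq 2$ in the Representation Formula and the factor $2$ already present in the slice estimate of the previous proposition.
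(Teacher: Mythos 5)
Your proof is correct and follows exactly the route the paper intends: the paper's own (one-line) proof says the result follows from the Representation Formula together with the slice estimate \eqref{eqprop4.6} and the supremum over slices, which is precisely the argument you spell out. The bookkeeping of the constant $4$ as $2\times 2$ and the observation $|x+yj|=|x-yj|=|q|$ are the right points to check, and you handle them correctly.
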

\begin{proof} The result follows by the Representation Formula and taking the supremum for $i\in\mathbb S^2$ in (\ref{eqprop4.6}).
\end{proof}

\begin{proposition}Let $p>0$ and $\alpha > - 1$. Then
 $$\sup\ \{   |f(q)| \colon  \| f  \|_{p,\alpha} \leq 1, \, q\in S \}< \infty $$
 for every $f\in\berg^p_{\alpha}$ and for any axially symmetric  compact set $S\subset\mathbb B$.
\end{proposition}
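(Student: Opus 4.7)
The plan is to reduce the statement directly to the pointwise growth estimate obtained in the preceding corollary, namely
$$|f(q)| \leq \frac{4\,\|f\|_{p,\alpha}}{(1-|q|^2)^{(2+\alpha)/p}}, \qquad q\in\mathbb{B}.$$
This bound is already proved, so the remaining work is only to handle the geometry of an axially symmetric compact set $S\subset\mathbb{B}$.

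First I would observe that, since $S$ is compact and $\mathbb{B}$ is the open unit ball, the continuous function $q\mapsto|q|$ attains its maximum on $S$ at some value $R<1$. Thus $|q|\leq R$ for all $q\in S$, and in particular $1-|q|^{2}\geq 1-R^{2}>0$ uniformly on $S$. (Axial symmetry plays no essential role here; compactness inside $\mathbb{B}$ is all that is used, which is in fact a natural hypothesis in this setting since one typically considers sets that intersect each slice in a compact subset.)

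Combining these two facts, for any $f\in\berg^{p}_{\alpha}$ with $\|f\|_{p,\alpha}\leq 1$ and any $q\in S$ we obtain
$$|f(q)| \leq \frac{4\,\|f\|_{p,\alpha}}{(1-|q|^{2})^{(2+\alpha)/p}} \leq \frac{4}{(1-R^{2})^{(2+\alpha)/p}},$$
which is a finite constant depending only on $p$, $\alpha$ and $S$ (through $R$), but independent of $f$ and $q$. Taking the supremum over all admissible $f$ and $q\in S$ yields the desired finiteness.

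There is no real obstacle here; the proposition is essentially a restatement of the previous corollary once one notes that compactness inside the open ball forces a strictly positive distance to $\partial\mathbb{B}$. The only point worth emphasizing in the write-up is the passage from "$S$ compact in $\mathbb{B}$" to the existence of $R<1$ with $S\subset\overline{\mathbb{B}(0,R)}$, which is immediate from continuity of the norm on the compact set $S$.
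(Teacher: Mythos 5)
Your proof is correct, and it is cleaner than the one in the paper. The paper proves this proposition by going back to the complex case: it splits $f$ on a slice as $f_1+f_2j$ via the Splitting Lemma, invokes the corresponding uniform-boundedness result for complex Bergman spaces (Corollary 4.15 in Zhu) on $S_i=S\cap\mathbb{C}(i)$, and then uses the Representation Formula to pass from the slice to all of $S$ --- this last step is where the axial symmetry of $S$ is actually used, since one needs $z=x+iy$ and $\bar z=x-iy$ to lie in $S_i$ whenever $q=x+I_qy\in S$. You instead quote the growth estimate $|f(q)|\leq 4\|f\|_{p,\alpha}(1-|q|^2)^{-(2+\alpha)/p}$ from the corollary immediately preceding the proposition, and reduce everything to the elementary observation that a compact subset of the open ball satisfies $\sup_{q\in S}|q|=R<1$. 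This is a legitimate shortcut: the heavy lifting (the splitting and the Representation Formula) has already been absorbed into that corollary, so re-doing it here is redundant. As you correctly note, your argument does not use axial symmetry at all and therefore proves the statement for an arbitrary compact $S\subset\mathbb{B}$; the hypothesis is inherited from the slice-by-slice method of the paper rather than being essential. The uniformity in $f$ over the unit ball $\{\|f\|_{p,\alpha}\leq 1\}$ is also handled correctly, since your final constant depends only on $p$, $\alpha$ and $R$.
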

\begin{proof}
Let $i,j\in\mathbb S^2$ be such that $i\perp j$ and let $S_i=S\cap\mathbb C(i)$. Since $\| f\|_{p,\alpha}\leq 1$ then, by definition,  $\| f\|_{p,\alpha,i}\leq 1$ and $f\in\berg^p_{\alpha,i}$. Let $f_1,f_2\colon\mathbb B_i\to\mathbb C(i)$ be holomorphic functions such that $ Q_i[f] = f_1 + f_2j$. Then, from the corresponding result in the complex case, Corollary 4.15 in \cite{kehe}, we obtain
\begin{multline*}
\sup\{   |f(z)| \colon \| f  \|_{p,\alpha, i} \leq 1, z\in S_i  \}\leq\\
 \sup\{   |f_1(z)| \colon  \| f_1  \|_{p,\alpha} \leq 1, z\in S_i\}+\sup\{   |f_2(z)| \colon\| f_2  \|_{p,\alpha} \leq 1, z\in S_i  \}  \ < \infty.
\end{multline*}
The result now follows from the Representation Formula, in fact, for any $q=x+I_qy\in S$, $z=x+iy,\bar z=x-iy\in S_i$ we have
$$
|f(q)|\leq |f(z)| +|f(\bar z)| <\infty,
$$
thus
\begin{multline*}
\sup\{   |f(q)| \colon \| f  \|_{p,\alpha} \leq 1, q\in S  \}\leq\\
 \sup\{   |f(z)| \colon  \| f  \|_{p,\alpha,i} \leq 1, z\in S_i\}+\sup\{   |f(\bar z)| \colon\| f  \|_{p,\alpha,i} \leq 1, z\in S_i  \}  \ < \infty,
\end{multline*}
and this concludes the proof.
\end{proof}

\begin{proposition}
The space $\berg^p_{\alpha}$, $p>0$ and $\alpha > - 1$, is complete.
\end{proposition}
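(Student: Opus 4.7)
The plan is to take a Cauchy sequence $(f_n)_{n\in\mathbb{N}}$ in $\berg^p_{\alpha}$ and construct its limit directly via locally uniform convergence, then establish norm convergence by a Fatou argument. First, the pointwise estimate from the previous corollary yields
$$|f_n(q) - f_m(q)| \leq \frac{4\,\|f_n - f_m\|_{p,\alpha}}{(1-|q|^2)^{(2+\alpha)/p}},$$
so on any closed ball $\overline{B(0,r)}$ with $r\in(0,1)$ the sequence $(f_n)$ is uniformly Cauchy. Hence $(f_n)$ converges locally uniformly on $\mathbb{B}$ to some function $f\colon \mathbb{B}\to\mathbb{H}$. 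Because uniform convergence on compact subsets of $\mathbb{B}$ preserves slice regularity --- apply the Splitting Lemma on each slice $\mathbb{B}_i$ and invoke the classical Weierstrass theorem to the complex holomorphic components --- we conclude $f\in\mathcal{SR}(\mathbb{B})$.

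Next I would show simultaneously that $f\in\berg^p_{\alpha}$ and that $\|f_n - f\|_{p,\alpha}\to 0$. Fix $i\in\mathbb{S}^2$. Since $f_m(z)\to f(z)$ pointwise on $\mathbb{B}_i$ as $m\to\infty$, Fatou's lemma applied to $|f_n - f_m|^p\, dA_{\alpha,i}$ gives
$$\int_{\mathbb{B}_i}|f_n(z) - f(z)|^p\, dA_{\alpha,i}(z) \leq \liminf_{m\to\infty}\int_{\mathbb{B}_i}|f_n(z) - f_m(z)|^p\, dA_{\alpha,i}(z) \leq \liminf_{m\to\infty}\|f_n - f_m\|_{p,\alpha}^p.$$
Given $\varepsilon>0$, choose $N$ with $\|f_n - f_m\|_{p,\alpha}<\varepsilon$ for $n,m\geq N$. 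Then for all $n\geq N$ and all $i\in\mathbb{S}^2$,
$$\int_{\mathbb{B}_i}|f_n(z) - f(z)|^p\, dA_{\alpha,i}(z)\leq \varepsilon^p.$$
Taking the supremum over $i\in\mathbb{S}^2$ yields $\|f_n - f\|_{p,\alpha}\leq \varepsilon$ for $n\geq N$, which both places $f$ in $\berg^p_{\alpha}$ (writing $f = (f - f_N) + f_N$) and produces the desired convergence.

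The main subtle point is that the Fatou bound on the right-hand side must be uniform in the slice parameter $i\in\mathbb{S}^2$; this works because the Cauchy condition for $\|\cdot\|_{p,\alpha}$ is, by the very definition of this norm as a supremum over $i$, already uniform in $i$, so the supremum may be taken after the $\liminf_m$ without loss. For the case $0<p<1$, the functional $\|\cdot\|_{p,\alpha}$ is only a quasi-norm, but the triangle inequality is replaced by $\|f+g\|_{p,\alpha}^p\leq\|f\|_{p,\alpha}^p+\|g\|_{p,\alpha}^p$ and the scheme above goes through unchanged, yielding completeness in the quasi-norm topology.
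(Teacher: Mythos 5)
Your proof is correct, but it follows a genuinely different route from the paper's. The paper fixes one slice $\mathbb{B}_i$, splits $Q_i[f_n]=f_{n,1}+f_{n,2}j$ via the Splitting Lemma, invokes the completeness of the classical complex weighted Bergman space $\berg^p_{\alpha,\mathbb C}$ to obtain limits $f_1,f_2$, and sets $f=P_i[f_1+f_2j]$; convergence is then established in the single-slice norm $\|\cdot\|_{p,\alpha,i}$. You instead construct the limit intrinsically: the pointwise growth estimate $|g(q)|\le 4\|g\|_{p,\alpha}(1-|q|^2)^{-(2+\alpha)/p}$ applied to $f_n-f_m$ gives locally uniform convergence, a Weierstrass-type argument on each slice gives slice regularity of the limit, and Fatou's lemma slice by slice gives norm convergence. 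What your approach buys is twofold: it is self-contained (no appeal to the complex Bergman completeness theorem), and it directly yields convergence in the full norm $\|\cdot\|_{p,\alpha}=\sup_{i}\|\cdot\|_{p,\alpha,i}$, since the Fatou bound $\liminf_m\|f_n-f_m\|_{p,\alpha}^p\le\varepsilon^p$ is uniform in the slice parameter. This last point is actually glossed over in the paper, which only proves $\|f_n-f\|_{p,\alpha,i}\to0$ for the one chosen $i$ and then asserts $f\in\berg^p_{\alpha}$; there the gap is filled by the slice-comparison inequality of Proposition \ref{proberg}, but your argument avoids needing it. The paper's approach, in exchange, is shorter once the complex machinery is granted, and it is the uniform template used throughout the paper for transferring Banach-space properties from $\mathbb{C}$ to $\mathbb{H}$. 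Your handling of $0<p<1$ via the quasi-norm inequality is also consistent with the paper's case distinction.
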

\begin{proof}
Let $f_n$ be a Cauchy sequence in $\berg_{\alpha}^p$. Then $f_n$ is a Cauchy sequence in $\berg_{\alpha,i}^p$ for some $i\in\mathbb S^2$. Let $j\in\mathbb S^2$ be such that $i\perp j$ and let $f_{n,1},f_{n,2}\colon\mathbb B_i\to\mathbb C(i)$ be holomorphic functions such that $ Q_i[f_n] = f_{n,1} + f_{n,2}j$.

 As the complex Bergman space $\berg_{\alpha,\mathbb C}^p$ is complete, see Corollary 4.16 in \cite{kehe}, there exist functions $f_{1},f_{2}\in\berg_{\alpha,\mathbb C}^p$ such that $f_{n,1}\to f_1$ and $f_{n,2}\to f_2$ in $\berg_{\alpha,\mathbb C}^p$ as $n\to\infty$. Let $f = P_i(f_{1} + f_{2}j)$, then we have $f\in\berg^p_{\alpha,i}$. Furthermore, as
$$\|f_n - f\|_{p,\alpha,i} \leq \| f_{n,1} -f_1\|_{p,\alpha} + \|f_{n,2} - f_2\|_{p,\alpha} \overset{n\to\infty}{\longrightarrow} 0,$$
for $p\geq 1$ and
$$\|f_n - f\|_{p,\alpha,i}^p \leq \| f_{n,1} -f_1\|_{p,\alpha}^p + \|f_{n,2} - f_2\|_{p,\alpha}^p \overset{n\to\infty}{\longrightarrow} 0,$$
for $0<p\leq1$,
we have $f_n\to  f$ in $\berg_{\alpha,i}^p$. Thus, $f\in \berg_{\alpha,i}^p$ but also $f\in\berg_{\alpha}^p$  which is then complete.
\end{proof}

In complex analysis, the distance function in the Bergman metric on the unit disc $\mathbb D$ is
$$
\beta(z,w) = \frac{1}{2}\log\frac{1+\rho(z,w)}{1-\rho(z,w)},\ \ \ {\rm for\ all } \ z,w\in\mathbb D,
$$
 where $\rho(z,w) = \left| \frac{z-w}{1-z\bar{w}}\right|$. This motivates the following definition.
\begin{definition}
For $i\in\mathbb S^2$ we define
$$\beta_i(z,w) = \frac{1}{2}\log\frac{1+\rho(z,w)}{1-\rho(z,w)},\qquad z,w\in\mathbb B_i,$$
 where $$\rho(z,w) = \left| \frac{z-w}{1-z\bar{w}}\right|.$$
\end{definition}
Note that, as in the complex case, $\beta_i$ is invariant under any slice regular M\"obius transformation $T_a$ with $a \in \mathbb B_i$, that is $\beta_i(T_a(z),T_a(w)) = \beta_i(z,w)$ for all $z, w \in\mathbb B_i.$

\begin{proposition}
Let $p>0$, $\alpha > -1$ and $r>0$. For $z\in\mathbb B_i$, let $D_i(z,r) = \{w\in\mathbb B_i\colon \beta_i(z,w)<r\}$. Then there exists a positive constant $C$ such that, for all $f\in\mathcal{SR}(\mathbb B)$ and all $z\in\mathbb B_i$, we have
$$|f(z)|^p \leq \frac{2^{\max\{p,1\}}C}{(1-|z|^2)^{2+\alpha}}\int_{D_i(z,r)}|f(w)|^p dA_{\alpha,i}(w).$$
\end{proposition}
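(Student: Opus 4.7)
The plan is to reduce the statement to the corresponding complex analytic inequality via the Splitting Lemma, since on a single slice $\mathbb B_i$ everything reduces to classical one variable complex analysis. Once $f$ is split into two complex-valued components $f_1, f_2$ on $\mathbb B_i$, I can invoke the well-known complex Bergman space disk inequality (see Zhu, \cite{kehe}, from which the other complex facts used in this section are quoted) for each $f_k$ separately, and then recombine.

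More concretely, I would proceed as follows. Fix $j\in\mathbb S^2$ with $j\perp i$ and use the Splitting Lemma to write $Q_i[f] = f_1 + f_2 j$, where $f_1,f_2\colon \mathbb B_i\to \mathbb C(i)$ are holomorphic. The function $\beta_i$ is, by its very definition, the Bergman metric on the complex disk $\mathbb B_i$, and $D_i(z,r)$ is a genuine pseudo hyperbolic disk in $\mathbb B_i$. Hence the classical complex estimate applies and produces a constant $C=C(p,\alpha,r)>0$ such that, for $k=1,2$,
\begin{equation*}
|f_k(z)|^p \leq \frac{C}{(1-|z|^2)^{2+\alpha}}\int_{D_i(z,r)}|f_k(w)|^p\,dA_{\alpha,i}(w),\qquad z\in\mathbb B_i.
\end{equation*}

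I would then combine the two inequalities using the bounds already recorded in Remark \ref{bergmanf1f2}: the upper inequality $|f(z)|^p\leq 2^{\max\{0,p-1\}}(|f_1(z)|^p+|f_2(z)|^p)$ on the left, and the lower inequality $|f_k(w)|^p\leq |f(w)|^p$ on the right. Chaining these gives
\begin{equation*}
|f(z)|^p \leq 2^{\max\{0,p-1\}}\cdot 2 \cdot \frac{C}{(1-|z|^2)^{2+\alpha}}\int_{D_i(z,r)}|f(w)|^p\,dA_{\alpha,i}(w),
\end{equation*}
and since $2^{\max\{0,p-1\}}\cdot 2 = 2^{\max\{1,p\}}$, this is exactly the claimed estimate.

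There is no real obstacle here, as the heavy lifting is done by the complex case. The only point that deserves a moment of care is to verify that the disk $D_i(z,r)$ defined via $\beta_i$ in the quaternionic setting coincides with the pseudo hyperbolic disk of radius $r$ on $\mathbb B_i$ that appears in the complex statement, which is clear from the definition of $\beta_i$ and the M\"obius invariance remark preceding the proposition. The constant $C$ produced is the one from the complex estimate; it depends only on $p$, $\alpha$, and $r$, and in particular not on the choice of $i\in\mathbb S^2$.
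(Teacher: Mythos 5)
Your proposal is correct and follows essentially the same route as the paper: the paper also invokes Proposition 4.13 of \cite{kehe} for the holomorphic components $f_1,f_2$ obtained from the Splitting Lemma and then chains the two inequalities of Remark \ref{bergmanf1f2} in exactly the way you describe, arriving at the same constant $2^{\max\{p,1\}}C$. No gaps.
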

\begin{proof}
Owing to Proposition 4.13 in \cite{kehe}, there exists a constant $C$ such that $$|g(z)|^p \leq \frac{ C}{(1-|z|^2)^{2+\alpha}}\int_{D_i(z,r)}|g(w)|^p dA_{\alpha,i}(w)$$ for all $z\in\mathbb B_i$ and all holomorphic functions $g\colon\mathbb B_i\to\mathbb C(i)$. Now let $j\in\mathbb S^2$ be such that $i\perp j$ and for any $f\in\mathcal{SR}(\mathbb B)$ let $f_1,f_2\colon\mathbb B_i\to\mathbb C(i)$ be holomorphic functions such that $ Q_i[f] = f_1 + f_2j$. Then, because of the inequality \eqref{eq_bergmanf1f2} in Remark \ref{bergmanf1f2}, we have
\begin{align*}
|f(z)|^p&\leq 2^{\max\{p-1,0\}} (|f_1(z)|^p + |f_2(z)|^p)  \\
&\leq2^{\max\{p-1,0\}}\Big(\frac{C}{(1-|z|^2)^{2+\alpha}}\int_{D_i(z,r)}|f_1(w)|^p dA_{\alpha,i}(w)
\\
&+ \frac{C}{(1-|z|^2)^{2+\alpha}}\int_{D_i(z,r)}|f_2(w)|^p dA_{\alpha,i}(w)\Big)
\\
&\leq \frac{2^{\max\{p,1\}}C}{(1-|z|^2)^{2+\alpha}}\int_{D_i(z,r)}|f(w)|^p dA_{\alpha,i}(w).
\end{align*}
\end{proof}

\section{Besov spaces}\label{BSpaces}

In complex analysis, the Besov spaces are analogue to Bergman spaces and they are studied with similar techniques.
The Besov space $\besov_{p, \mathbb{C}}$, for $p>0$, is defined as the set of all analytic functions on the complex unit disc $\mathbb D$ such that
$$
\int_{\mathbb D}|(1-|z|^2)^{n}f^{(n)}(z)|^pd\lambda(z)<\infty,
$$
 for some $n$ with $np>1$ where $d\lambda(z) = \frac{dA(z)}{(1-|z|^2)^2}$.

To move to the setting of quaternionic Besov spaces, we define a suitable differential.
For $i\in \mathbb S^2$ let $$ d\lambda_i(z) = \frac{dA_i(z)}{ (1-|z|^2) ^2},$$
where  $dA_i(z)$ is again the normalized differential of area in the plane $\mathbb C(i)$. Note that, as in the complex case, $d\lambda_i$ is invariant under slice regular M\"obius transformations $T_a$ with $a\in\mathbb B_i$.

\begin{proposition}\label{pro1}
Let $i\in\mathbb S^2$, let $p>0$ and let $m,n \in \mathbb N$ such that $np>1$ and $mp>1$.  Then for any $f\in \mathcal{SR}(\mathbb B)$ the inequality
$$ \int_{\mathbb B_i} \  |   ( 1-|z|^2) ^n  \partial_{x_0}^{n}  f   (z)  |^p  \ d \lambda_i(z)    \  < \infty  $$
holds if and only if
$$ \int_{\mathbb B_i} \  |   ( 1-|z|^2)^m  \partial_{x_0}^{m} f (z)  |^p  \ d \lambda_i(z)    \  < \infty.$$
\end{proposition}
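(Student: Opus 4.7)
The natural approach is to reduce the statement to the analogous fact for holomorphic functions on $\mathbb B_i$ via the Splitting Lemma. Fix $j\in\mathbb S^2$ with $j\perp i$ and write $Q_i[f] = f_1 + f_2 j$ for holomorphic functions $f_1,f_2\colon\mathbb B_i\to\mathbb C(i)$. Since differentiation with respect to $x_0$ commutes with the splitting and on $\mathbb B_i$ coincides with the usual complex derivative, one has
\begin{equation*}
\partial_{x_0}^{k} f(z) = f_1^{(k)}(z) + f_2^{(k)}(z)j \qquad \text{for all } z\in\mathbb B_i,\ k\in\mathbb N,
\end{equation*}
whence $|\partial_{x_0}^k f(z)|^2 = |f_1^{(k)}(z)|^2 + |f_2^{(k)}(z)|^2$.

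From this identity I would deduce, exactly as in Remark \ref{bergmanf1f2}, the two-sided pointwise comparison
\begin{equation*}
|f_l^{(k)}(z)|^p \leq |\partial_{x_0}^k f(z)|^p \leq 2^{\max\{p-1,0\}}\bigl(|f_1^{(k)}(z)|^p + |f_2^{(k)}(z)|^p\bigr),\qquad l=1,2.
\end{equation*}
Integrating against $(1-|z|^2)^{kp}\,d\lambda_i(z)$ shows that the quaternionic integral in the statement is finite if and only if the two complex integrals
\begin{equation*}
\int_{\mathbb B_i}|(1-|z|^2)^k f_l^{(k)}(z)|^p\,d\lambda_i(z),\qquad l=1,2,
\end{equation*}
are both finite.

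Now I would invoke the classical complex analogue of the statement, which asserts that for a holomorphic function $g$ on the unit disc and any integers $n,m$ with $np,mp>1$, the finiteness of $\int|(1-|z|^2)^n g^{(n)}(z)|^p\,d\lambda(z)$ is equivalent to that of $\int|(1-|z|^2)^m g^{(m)}(z)|^p\,d\lambda(z)$; this is a standard characterization of the complex Besov spaces (see, for instance, the analogous treatment in \cite{kehe} via the equivalence of different norms on $\besov_{p,\mathbb C}$). Applying it to $f_1$ and to $f_2$ with the given exponents $n$ and $m$ yields the desired equivalence for $f$ after reassembling via the inequalities above.

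The only non-routine point is the appeal to the complex equivalence of the $n$-th and $m$-th derivative conditions under the hypotheses $np,mp>1$; once that is cited, the argument is a clean two-line reduction through the Splitting Lemma and the pointwise comparison. A minor caveat to handle carefully is the constant $2^{\max\{p-1,0\}}$ arising in the passage from $|f_l^{(k)}|^p$ to $|\partial_{x_0}^k f|^p$ for $0<p<1$ versus $p\geq 1$, but this only affects constants and not the finiteness of integrals.
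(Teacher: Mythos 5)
Your proposal is correct and follows essentially the same route as the paper's own proof: splitting $Q_i[f]=f_1+f_2j$, using the pointwise comparison $|f_l^{(k)}|^p\leq|\partial_{x_0}^kf|^p\leq 2^{\max\{p-1,0\}}(|f_1^{(k)}|^p+|f_2^{(k)}|^p)$ to transfer finiteness of the integrals to the components, and then invoking the complex equivalence of the $n$-th and $m$-th derivative conditions (Lemma 5.16 in \cite{kehe}). No gaps.
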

\begin{proof}
Let $f\in \mathcal{SR}(\mathbb B)$, let $j\in\mathbb S^2$ such that $i\perp j$ and let $f_1,f_2\colon\mathbb B_i\to\mathbb C(i)$ be holomorphic functions such that $ Q_i[f] = f_1 + f_2j$. Note that ${Q}_i[f]$ is a function of the complex variable $z = x_0 + yi$. So for all $k\in\mathbb N$ we have $\partial_{x_0}^k f(z) = Q_i[f]^{(k)}(z) = f_1^{(k)}(z) + f_2^{(k)}(z)j$ and, from the inequality \eqref{eq_bergmanf1f2} in Remark \ref{bergmanf1f2}, we obtain
\begin{align*} |(1-|z|^2)^kf_{l}^{(k)}(z)|^p &\leq |(1-|z|^2)^k \partial_{x_0}^kf (z)|^p\\
&\leq 2^{\max\{p-1,0\}}|(1-|z|^2)^kf_{1}^{(k)}(z)|^p + 2^{\max\{p-1,0\}} |(1-|z|^2)^kf_{2}^{(k)}(z)|^p\end{align*}
for $l=1,2$ and $z\in\mathbb B_i$. Therefore, the inequality
\begin{equation}\label{int_d0nC} \int_{\mathbb B_i} \  |   ( 1-|z|^2) ^k   f_l^{(k)}   (z)  |^p  \ d \lambda_i(z) < \infty,  \qquad l=1,2,
\end{equation}
holds if and only if $ \int_{\mathbb B_i} \  |   ( 1-|z|^2) ^k  \partial_{x_0}^{k}  f   (z)  |^p  \ d \lambda_i(z)    \  < \infty$. But from Lemma 5.16 in \cite{kehe}, we have that \eqref{int_d0nC} holds for $k=n$ if and only if it holds for $k=m$, thus the statement is true.

\end{proof}

 The following definition will be useful in the sequel.
\begin{definition}
By $\besov_{p,i}$ we denote the quaternionic right linear space of slice regular functions on $\mathbb B$ such that
$$ \int_{\mathbb B_i} \  |   ( 1-|z|^2) ^n  \partial_{x_0}^{n}   f(z)  |^p  \ d \lambda_i (z)    \  < \infty ,
$$
for some $n\in\mathbb N$ such that $np>1$.
Moreover we set  $\besov_{\infty,i}=\mathcal{B}_{i}$, where $\mathcal{B}_{i}$ is as in Definition \ref{BICONI}.
\end{definition}
Note that by Proposition \ref{pro1} this definition is independent of the choice of $n$.
\begin{remark}\label{besovf1f2}
{\rm
Let $j\in\mathbb S^2$ be such that $j\perp i$. Then there exist holomorphic functions $f_1,f_2\colon\mathbb B_i\to\mathbb C(i)$ such that $ Q_i[f] = f_1 + f_2j$ and so $\partial_{x_0}^nf(z) = f_1^{(n)}(z) + f_2^{(n)}(z)j$ for $z\in\mathbb B_i$. Thus, for $z\in\mathbb B_i$, we have
$$| f_k^{(n)}(z)|^p\leq | \partial_{x_0}^nf(z)|^p \leq 2^{\max\{0,p-1\}}\left(|f_1^{(n)}(z)|^p + | f_2^{(n)}(z)|^p\right),\qquad k=1,2.$$
Thus, the function $f$ is in $\besov_{p,i}$ if and only if $f_1$ and $f_2$ are in the complex Besov space $\besov_{p, \mathbb{C}}$.
}
\end{remark}

\begin{proposition}\label{prp413}
Let $p>0$ and  $i,j\in \mathbb S^2$. Then  $\besov_{p,i}$ and $\besov_{p,j}$ are equal as sets.
\end{proposition}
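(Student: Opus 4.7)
The plan is to mirror the argument used for the Bergman spaces in Proposition \ref{proberg}, exploiting the fact that $\partial_{x_0}^n f$ is itself slice regular on $\mathbb{B}$ whenever $f$ is, so the Representation Formula (Theorem \ref{formula}) applies to it directly. Fix $n \in \mathbb{N}$ with $np > 1$; by Proposition \ref{pro1} this choice is immaterial, and it suffices to transfer the finiteness of the weighted integral from $\mathbb{B}_i$ to $\mathbb{B}_j$ and conversely. Assume $f \in \besov_{p,i}$.

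First I would apply the Representation Formula to $\partial_{x_0}^n f$: for $w = x + yj \in \mathbb{B}_j$ and the associated $z = x + yi \in \mathbb{B}_i$, one obtains
\begin{equation*}
\partial_{x_0}^n f(w) = \tfrac{1}{2}(1+ji)\,\partial_{x_0}^n f(\bar z) + \tfrac{1}{2}(1-ji)\,\partial_{x_0}^n f(z),
\end{equation*}
whence $|\partial_{x_0}^n f(w)| \leq |\partial_{x_0}^n f(z)| + |\partial_{x_0}^n f(\bar z)|$, using $|1 \pm ji| \leq 2$. Since $|w| = |z| = |\bar z|$, multiplying by $(1 - |w|^2)^n$ and applying the elementary inequality $(a+b)^p \leq 2^{\max\{p-1,0\}}(a^p + b^p)$ for $a, b \geq 0$ yields
\begin{equation*}
\bigl|(1-|w|^2)^n \partial_{x_0}^n f(w)\bigr|^p \leq 2^{\max\{p-1,0\}}\Bigl[\bigl|(1-|z|^2)^n \partial_{x_0}^n f(z)\bigr|^p + \bigl|(1-|\bar z|^2)^n \partial_{x_0}^n f(\bar z)\bigr|^p\Bigr].
\end{equation*}

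Next I would integrate over $\mathbb{B}_j$ against $d\lambda_j$ and transport the right-hand side to $\mathbb{B}_i$ via the natural isometric identification $x + yj \leftrightarrow x + yi$, which sends $d\lambda_j$ to $d\lambda_i$ (both measures are the normalized area divided by $(1-|z|^2)^2$). The change of variables $\bar z \mapsto z$ in the second term is harmless because $d\lambda_i$ is invariant under complex conjugation on $\mathbb{B}_i$. Combining, one gets
\begin{equation*}
\int_{\mathbb{B}_j}\bigl|(1-|w|^2)^n \partial_{x_0}^n f(w)\bigr|^p d\lambda_j(w) \leq 2^{\max\{p,1\}}\int_{\mathbb{B}_i}\bigl|(1-|z|^2)^n \partial_{x_0}^n f(z)\bigr|^p d\lambda_i(z) < \infty,
\end{equation*}
so $f \in \besov_{p,j}$. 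Swapping the roles of $i$ and $j$ yields the reverse inclusion, and hence the two spaces coincide.

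There is no genuine obstacle: the argument is a near-verbatim adaptation of Proposition \ref{proberg}, the only technical care needed being the split between the regimes $p \geq 1$ and $0 < p \leq 1$ in the power-of-sum inequality (absorbed into the constant $2^{\max\{p-1,0\}}$). If one also wished to include the boundary case $p = \infty$, which by convention gives $\besov_{\infty,i} = \mathcal{B}_i$, the equality $\besov_{\infty,i} = \besov_{\infty,j}$ is already contained in the Remark following Proposition \ref{prop_Bnorms}.
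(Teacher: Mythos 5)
Your proposal is correct and follows essentially the same route as the paper's own proof: apply the Representation Formula to the slice regular function $\partial_{x_0}^n f$, use $|1\pm ji|\leq 2$ and the elementary power-of-sum inequality with constant $2^{\max\{p-1,0\}}$, integrate, change variables $\bar z\mapsto z$, and swap the roles of $i$ and $j$. The only (harmless) addition is your explicit remark about the case $p=\infty$, which the paper does not treat in this proposition.
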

\begin{proof}
Let $f\in\mathcal{SR}(\mathbb B)$ and let $n\in\mathbb N$ such that $pn >1$. For $w = x_0+yj\in\mathbb B_j$, let $z=x_0+yi\in\mathbb B_i$. Then, from the Representation Formula, one obtains
$$   |\partial_{x_0}^{n} f (w)|  = \frac{1}{2}\left|(1 - ji) \partial_{x_0}^{n} f (z)      +    (1  + j i)   \partial_{x_0}^{n} f (\bar z)\right| \leq \left|  \partial_{x_0}^{n} f (z) \right|     +    \left| \partial_{x_0}^{n} f (\bar z)\right|. $$
This implies
\begin{gather*}  \int_{\mathbb B_j}   |   ( 1-|w|^2) ^n  \partial_{x_0}^{n}  f(w)  |^p\  d \lambda_j (w)  \\
 \leq      2^{\max\{p-1,0\}}  \left(  \int_{\mathbb B_i} \  |   ( 1-|z|^2) ^n  \partial_{x_0}^{n} f (z)  |^p  \ d \lambda_i (z)   +
 \int_{\mathbb B_i} \  |   ( 1-|z|^2) ^n  \partial_{x_0}^{n} f( \bar z)  |^p  \ d\lambda_i  (z)  \right).    \end{gather*}
By changing  coordinates $\bar z \to z $, we obtain
$$ \int_{\mathbb B_i} \  |   ( 1-|z|^2) ^n  \partial_{x_0}^{n} f(\bar z)  |^p  \ d\lambda_i (z)  =
 \int_{\mathbb B_i} \  |   ( 1-|z|^2) ^n  \partial_{x_0}^{n}  f( z)  |^p  \ d \lambda_i (z),$$
and so
$$  \int_{\mathbb B_j}  |   ( 1-|w|^2) ^n  \partial_{x_0}^{n} f(w)  |^p  \ d\lambda_j  (w)    \leq        2^{\max\{p,1\}}\int_{\mathbb B_i} |   ( 1-|z|^2) ^n  \partial_{x_0}^{n}  f(z)  |^p  \ d \lambda_i  (z). $$
Thus, for any $f\in\besov_{p,i}$ we have that $f\in\besov_{p,j}$. By exchanging the roles of $i$ and $j$, we obtain the other inclusion.
\end{proof}
By virtue of the previous result we can now give the definition of Besov space in this setting:
\begin{definition}
Let $p>0$, let $i\in \mathbb S^2$ and let $n\in \mathbb N$ with $pn>1$. The {\em slice regular Besov space $\besov_{p}$} is the quaternionic right linear space of slice regular functions on $\mathbb B$ such that
$$ \sup_{i\in\mathbb S^2}\int_{\mathbb B_i} \  |   ( 1-|z|^2) ^n  \partial_{x_0}^{n}   f(z)  |^p  \ d \lambda_i (z)    \  < \infty .
$$
We also define the Besov space  $\besov_{\infty}=\mathcal{B}$, where $\mathcal{B}$ is the Bloch space.
\end{definition}
Next result gives a nice characterization of the functions in $ \besov_{p}$.
\begin{proposition}\label{pro2}
Let $i\in\mathbb S^2$. For any $p>0$ there exists a sequence $(a_k)_{k\geq 1}$ in $\mathbb B_i$ with the following property:  if $ b>\max\{0, (p-1)/p\} $, then $f\in \besov_{p}$ if and only if there exists a sequence $(d_k)_{k\geq 1}\in \ell^p(\mathbb H)$ such that
\begin{equation}\label{besovrepH}f = \sum_{k=1}^{\infty} P_i\left[\left(\frac{1-|a_k|^2}{1-z\overline{a_k}}\right)^b\right]d_k.\end{equation}
\end{proposition}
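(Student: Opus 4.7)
The plan is to reduce the statement to the atomic decomposition theorem for the classical complex Besov space $\besov_{p,\mathbb{C}}$ (see \cite{kehe}) via the Splitting Lemma. Choose $(a_k)_{k\geq 1}\subset\mathbb{B}_i$ to be the sequence furnished by that complex theorem, so that for $b>\max\{0,(p-1)/p\}$ the functions $\phi_k(z)=\bigl((1-|a_k|^2)/(1-z\overline{a_k})\bigr)^{b}$ form an $\ell^p$-atomic basis of $\besov_{p,\mathbb{C}}$. Fix $j\in\mathbb{S}^2$ with $j\perp i$ and, as quaternionic atoms, take the slice regular extensions $g_k:=P_i[\phi_k]\in\mathcal{SR}(\mathbb{B})$. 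The key identity, which follows from the explicit formula for $P_i$ together with the fact that each $\phi_k$ is $\mathbb{C}(i)$-valued on $\mathbb{B}_i$, is
\begin{equation*}
P_i[\phi_k \lambda]=g_k\,\lambda \qquad \text{for every } \lambda\in\mathbb{H},
\end{equation*}
since $P_i$ acts only by left multiplication with the quaternionic scalars $\tfrac{1}{2}(1\pm I_q i)$.

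For the forward direction, let $f\in\besov_p$ and use the Splitting Lemma to write $Q_i[f]=f_1+f_2 j$ with $f_1,f_2\colon\mathbb{B}_i\to\mathbb{C}(i)$ holomorphic. Remark~\ref{besovf1f2} yields $f_1,f_2\in\besov_{p,\mathbb{C}}$, so the complex theorem produces $(c_k^{(1)}),(c_k^{(2)})\in\ell^p$ with $f_\ell=\sum_k \phi_k\,c_k^{(\ell)}$, $\ell=1,2$. Setting $d_k:=c_k^{(1)}+c_k^{(2)} j$, one has $|d_k|^2=|c_k^{(1)}|^2+|c_k^{(2)}|^2$, so $(d_k)\in\ell^p(\mathbb{H})$. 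On $\mathbb{B}_i$ the two expansions combine to $Q_i[f](z)=\sum_k \phi_k(z)\,d_k$; applying $P_i$ and using $P_i\circ Q_i=\mathcal{I}$ together with the key identity above gives $f=\sum_k g_k\,d_k$. The converse runs this argument in reverse: given $(d_k)\in\ell^p(\mathbb{H})$, split $d_k=c_k^{(1)}+c_k^{(2)} j$ to obtain $(c_k^{(\ell)})\in\ell^p$; applying $Q_i$ to the expansion $f=\sum_k g_k\,d_k$ yields
\begin{equation*}
Q_i[f]=\Bigl(\sum_k \phi_k\,c_k^{(1)}\Bigr)+\Bigl(\sum_k \phi_k\,c_k^{(2)}\Bigr) j,
\end{equation*}
and the complex theorem identifies each component as an element of $\besov_{p,\mathbb{C}}$, so $f\in\besov_{p,i}=\besov_p$ by Remark~\ref{besovf1f2} and Proposition~\ref{prp413}.

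The main technical obstacle is justifying that $P_i$ and $Q_i$ commute with the infinite sums. In the complex theorem the atomic series converges in the Besov (semi)norm and locally uniformly on $\mathbb{B}_i$; the key identity above shows that the slice regular partial sums $\sum_{k=1}^N g_k\,d_k$ restrict on $\mathbb{B}_i$ to $\sum_{k=1}^N \phi_k\,d_k$, and by the Representation Formula their locally uniform convergence on $\mathbb{B}_i$ transfers to all of $\mathbb{B}$ with at most a factor-of-two loss, exactly as in the proof of Proposition~\ref{prp413}. Continuity of $P_i$ and $Q_i$ then legitimizes interchanging each operator with its respective series, completing both implications.
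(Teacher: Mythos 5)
Your proposal is correct and follows essentially the same route as the paper's proof: both reduce to the complex atomic decomposition theorem (Theorem 5.17 in \cite{kehe}) via the Splitting Lemma and Remark~\ref{besovf1f2}, combine the component expansions with $d_k=c_{1,k}+c_{2,k}j$, and transfer back with $P_i\circ Q_i=\mathcal{I}$. Your explicit right-linearity identity for $P_i$ and the closing remarks on interchanging $P_i$, $Q_i$ with the infinite sums make precise steps the paper performs only formally, but do not change the argument.
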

\begin{proof}
Let $i\in\mathbb S^2$ and let us identify $\mathbb B_i$ with the unit disc $\mathbb D$.
Theorem 5.17 in \cite {kehe} yields the existence of a sequence $(a_k)_{k\geq1}\subset\mathbb B_i $ such that the complex Besov space
$\besov_{p, \mathbb{C}}$ on $\mathbb B_i$ consists exactly of the functions of the form
\begin{equation}\label{besovrepC}  g (z) = \sum_{k=1}^{\infty}   \left(\frac{1-|a_k|^2}{1-z\overline{a_k} } \right)^b c_k\end{equation}
with $(c_k)_{k\geq1}\in\ell^p(\mathbb C(i))$.
Now let $j\in \mathbb S^2$ such that $j\perp i$. For any $f\in\besov_p$
 there exist two holomorphic functions $f_1, f_2\colon\mathbb B_i\to\mathbb C(i)$ such that $ Q_i[f]= f_1+ f_2 j$.
Since $f\in \besov_{p,i}$, for $i\in\mathbb S^2$, from Remark \ref{besovf1f2} it follows that $f_1$ and $f_2$ belong to the complex Besov space $\besov_{p, \mathbb{C}}$ and so there exist sequences $(c_{1,k})_{k\geq1},(c_{2,k})_{k\geq1}\in\ell^p(\mathbb C(i))$ that give the representation \eqref{besovrepC} of $f_1$ and $f_2$, respectively. Therefore, we have
$$ Q_i[f](z)= f_{1} (z) +f_2(z) j = \sum_{k=1}^{\infty}  \left(\frac{1-|{a_k}|^2}{1-z\overline{a_k} } \right)^b c_{1,k}   +
   \sum_{k=1}^{\infty}\left(\frac{1-|a_k|^2}{1-z\overline{a_k} } \right)^b c_{2,k}  j. $$
As $P_i\circ Q_i= I_{\mathcal{SR}(\mathbb B)}$, we obtain the desired representation
$$ f = P_i\circ Q_i [f] = P_i \left[\sum_{k=1}^{\infty} \left(\frac{1-|a_k|^2}{1-z\overline{a_k} } \right)^b(c_{1,k}  +c_{2,k}  j)\right] = \sum_{k=1}^{\infty}P_i \left[ \left(\frac{1-|a_k|^2}{1-z\overline{a_k} } \right)^b\right](c_{1,k}  +c_{2,k}  j), $$
where the sequence of the coefficients $d_k = c_{1,k} + c_{2,k}j$ lie in $\ell^p(\mathbb H)$ because $(c_{1,k})_{k\geq 1}$ and $(c_{2,k})_{k\geq1}$ belong to $\ell^p(\mathbb C(i))$.

If, on the other hand, $f$ has the form \eqref{besovrepH}, then there exist sequences $(c_{1,k})_{k\geq1}$ and $(c_{2,k})_{k\geq1}$ such that $d_k = c_{1,k} + c_{2,k}j$.  Thus, we have
$${Q}_i[f] = f_1 + f_2 j = \sum_{k=1}^{\infty}   \left(\frac{1-|{a_k}|^2}{1-z\overline{a_k} } \right)^b c_{1,k}+ \sum_{k=1}^{\infty}   \left(\frac{1-|{a_k}|^2}{1-z\overline{a_k} } \right)^b c_{2,k} \,j
$$
and as $(d_k)_{k\geq 1}$ lies in $\ell^p(\mathbb H)$, it follows that $(c_{1,k})_{k\geq1}$ and $(c_{2,k})_{k\geq 1}$ are in $\ell^p(\mathbb C(i))$. Therefore, $f_1$ and $f_2$ are of the form \eqref{besovrepC} and so they lie in the complex Besov space $\besov_{p, \mathbb{C}}$, which implies $f\in\besov_{p,i}$ and since $i\in\mathbb S^2$ is arbitrary, we conclude that $f\in\besov_{p}$.

\end{proof}

The following result shows that  the space $\besov_{p,i}$ is invariant under M\"obius transformation if one takes the $\circ_i$ composition.

\begin{proposition}Let $i\in \mathbb S^2$, let $a\in\mathbb B_i$ and let $T_a$ be a slice regular M\"obius transformation. Then for $f\in\besov_{p,i}$ we have $f\circ_iT_a\in\besov_{p,i}$.
\end{proposition}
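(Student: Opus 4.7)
The strategy is to reduce to the complex case via the Splitting Lemma and exploit M\"obius invariance of the classical complex Besov space. Fix $j\in\mathbb{S}^2$ with $j\perp i$ and write $Q_i[f] = f_1 + f_2 j$ with $f_1,f_2\colon\mathbb{B}_i\to\mathbb{C}(i)$ holomorphic. By Remark \ref{besovf1f2}, the hypothesis $f\in\besov_{p,i}$ is equivalent to $f_1,f_2\in\besov_{p,\mathbb{C}}$ (identifying $\mathbb{B}_i$ with $\mathbb{D}$).

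The next step is to translate the $\circ_i$-composition into a classical complex composition. Since $a\in\mathbb{B}_i$, Proposition \ref{Moeb_prop}(\ref{Moeb_red}) (with $I=i$ if $a\notin\mathbb{R}$, and any $I\in\mathbb{S}^2$, in particular $I=i$, if $a\in\mathbb{R}$) shows that $Q_i[T_a]$ on $\mathbb{B}_i$ coincides with the usual complex M\"obius transformation $z\mapsto (a-z)(1-\bar a z)^{-1}$, which maps $\mathbb{B}_i$ bijectively onto itself. Hence by the definition of $f\circ_i T_a$ we have
\begin{equation*}
Q_i[f\circ_i T_a](z) = (f_1\circ Q_i[T_a])(z) + (f_2\circ Q_i[T_a])(z)\, j,\qquad z\in\mathbb{B}_i.
\end{equation*}
Thus, invoking Remark \ref{besovf1f2} once more, it suffices to show that $f_k\circ Q_i[T_a]\in\besov_{p,\mathbb{C}}$ for $k=1,2$.

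For this I would appeal to the well-known M\"obius invariance of the complex Besov space: if $g\in\besov_{p,\mathbb{C}}$ and $\varphi$ is any disc automorphism, then $g\circ\varphi\in\besov_{p,\mathbb{C}}$. (This follows from the fact that $d\lambda_i$ is M\"obius invariant and that the defining integral
\[
\int_{\mathbb{B}_i}\bigl|(1-|z|^2)^n g^{(n)}(z)\bigr|^p\,d\lambda_i(z)
\]
is preserved under the change of variables $z\mapsto \varphi(z)$, up to a conformal factor absorbed by $(1-|z|^2)^n|g^{(n)}|^p$; one may also cite the standard reference, e.g.\ Theorem 5.13 in \cite{kehe}.) Applying this to $g=f_k$ and $\varphi = Q_i[T_a]$ yields $f_k\circ Q_i[T_a]\in\besov_{p,\mathbb{C}}$, which by Remark \ref{besovf1f2} gives $f\circ_i T_a\in\besov_{p,i}$, as desired.

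The only mild obstacle is the first step, checking that $Q_i[T_a]$ really acts as a complex M\"obius transformation on $\mathbb{B}_i$; this is immediate from Proposition \ref{Moeb_prop}(\ref{Moeb_red}) because the hypothesis $a\in\mathbb{B}_i$ is precisely what guarantees that the distinguished plane $\mathbb{C}(I)$ attached to $T_a$ can be taken to be $\mathbb{C}(i)$. Once this identification is in place, everything reduces to the classical complex statement. Note that this argument is purely qualitative and will not give a norm estimate (the space $\besov_{p,i}$ has not yet been equipped with a norm at this point in the paper), but the invariance as sets is exactly what the proposition asserts.
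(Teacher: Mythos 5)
Your proof is correct and follows essentially the same route as the paper: split $Q_i[f]=f_1+f_2j$, use Remark \ref{besovf1f2} to reduce membership in $\besov_{p,i}$ to membership of $f_1,f_2$ in $\besov_{p,\mathbb C}$, identify $Q_i[T_a]$ with the complex M\"obius transformation, and invoke the classical M\"obius invariance of $\besov_{p,\mathbb C}$ (the paper cites Theorem 5.18 in \cite{kehe} for this last step). The only differences are cosmetic: a slightly different theorem number from the reference and your added heuristic for why the complex invariance holds.
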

\begin{proof}
Let $j\in\mathbb S^2$ with $j\perp i$ and let $f_1,f_2\colon\mathbb B_i\to\mathbb C(i)$ be holomorphic functions such that $ Q_i[f] = f_1 + f_2j$. Moreover, the functions $f_1$ and $f_2$ lie in the complex Besov space $\besov_{p, \mathbb{C}}$ because of Remark \ref{besovf1f2}. By the definition of the $\circ_i$-composition, we have
$$ {Q}_i[f\circ_i T_a] = f_1\circ Q_i[T_a] + f_2\circ Q_i[T_a]j.$$
But the function $ Q_i[T_a]$ is nothing but the complex M\"obius transformation associated with $a\in \mathbb B_i$. Moreover, because of Theorem 5.18 in \cite{kehe}, the complex Besov space $\besov_{p, \mathbb{C}}$ is M\"obius invariant. Thus, $f_1\circ Q_i[T_a]$ and $f_2\circ Q_i[T_a]$ lie in the complex Besov space, which is equivalent to $f\circ_iT_a\in\besov_{p,i}$.

\end{proof}

\begin{definition}\label{Defrho}
Let $i\in \mathbb S^2$. For  $p>1$, we define
$$\rho_{p,i}(f) =\left[    \int_{\mathbb B_i}     (1-|z|^2)^p |\partial_{x_0} f(z)|^p      d\lambda_i (z)      \right]^{\frac{1}{p}}  $$
for all $f\in \besov_{p,i}$. For $0< p \leq 1$ and $n\in\mathbb N$ with $np>1$, we define
$$\rho_{p,i,n}(f) = \sup_{z\in\mathbb B_i } |f(z)|   +  \sup_{a\in\mathbb B_i } \left[ \int_{\mathbb  B_i}     (1-|z|^2)^{np} |\partial_{x_0}^{n} (f\circ_i T_a)(z)|^p   d \lambda_i(z)   \right]^{\frac{1}{p}}   , $$
for all $f\in\besov_{p,i}$.\end{definition}

In the following result we show that, for $p>1$, $\besov_{p,i}$ is a Banach space invariant under the $\circ_i$-composition with a M\"obius map.

\begin{lemma}\label{lemma4.17}
Let $i\in\mathbb S^2$ and $p>1$. Then $\rho_{p,i}$ is a complete seminorm on $\besov_{p,i}$ such that  $\besov_{p,i}$ modulo the constant functions endowed with the norm induced by $\rho_{p,i}$ is a Banach space, and it satisfies
$$\rho_{p,i}(f\circ_iT_a) = \rho_{p,i}(f) $$
for all $f\in\besov_{p,i}$ and all $a\in\mathbb B_i$.
\end{lemma}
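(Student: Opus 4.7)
The plan is to reduce every assertion to the corresponding statement for the classical complex Besov space on the disc $\mathbb B_i$ via the Splitting Lemma. For $f \in \besov_{p,i}$ and $j \in \mathbb S^2$ with $j \perp i$, I would write $Q_i[f] = f_1 + f_2 j$ with $f_1, f_2$ holomorphic on $\mathbb B_i$; Remark \ref{besovf1f2} places both components in the complex Besov space $\besov_{p,\mathbb C}$. Because $\partial_{x_0} f(z) = f_1'(z) + f_2'(z) j$ with $|\partial_{x_0} f(z)|^2 = |f_1'(z)|^2 + |f_2'(z)|^2$ for $z \in \mathbb B_i$, an elementary $p$th-power inequality (of the type already used in Remark \ref{besovf1f2}) yields the two-sided comparison
\begin{equation*}
\max\{\rho_p(f_1),\rho_p(f_2)\} \leq \rho_{p,i}(f) \leq C_p \bigl(\rho_p(f_1)^p + \rho_p(f_2)^p\bigr)^{1/p},
\end{equation*}
where $\rho_p$ is the complex Besov seminorm. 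This comparison is the workhorse for everything that follows.

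Once this is in place, the seminorm properties are immediate: homogeneity and non-negativity are clear, and the triangle inequality is Minkowski's inequality in $L^p(\mathbb B_i, d\lambda_i)$ applied to the quaternionic-valued integrand $(1-|z|^2)\partial_{x_0}f(z)$. If $\rho_{p,i}(f) = 0$ then $\partial_{x_0} f \equiv 0$ on $\mathbb B_i$, so $Q_i[f] \equiv c$ for some $c \in \mathbb H$, and the identity $P_i \circ Q_i = \mathcal I_{\mathcal{SR}(\mathbb B)}$ together with the explicit form of $P_i$ on constants forces $f \equiv c$ on $\mathbb B$. Thus $\rho_{p,i}$ descends to an honest norm on $\besov_{p,i}/\mathbb H$.

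M\"obius invariance is the cleanest step. Setting $\tau_a := Q_i[T_a]$, which by Proposition \ref{Moeb_prop} is the ordinary complex M\"obius transformation on $\mathbb B_i$, the definition of $\circ_i$-composition yields $Q_i[f \circ_i T_a] = (f_1 \circ \tau_a) + (f_2 \circ \tau_a) j$, hence
\begin{equation*}
|\partial_{x_0}(f \circ_i T_a)(z)|^2 = \bigl(|f_1'(\tau_a(z))|^2 + |f_2'(\tau_a(z))|^2\bigr)|\tau_a'(z)|^2 = |\partial_{x_0} f(\tau_a(z))|^2\,|\tau_a'(z)|^2.
\end{equation*}
Combining this with the classical identity $1 - |\tau_a(z)|^2 = (1-|z|^2)|\tau_a'(z)|$ and the M\"obius invariance of $d\lambda_i$, the change of variables $w = \tau_a(z)$ delivers $\rho_{p,i}(f \circ_i T_a) = \rho_{p,i}(f)$ at once.

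The main obstacle is completeness. Given a Cauchy sequence $([f_n])$ in the quotient $(\besov_{p,i}/\mathbb H, \rho_{p,i})$, I would fix representatives with $f_n(0) = 0$ and split $Q_i[f_n] = f_{n,1} + f_{n,2} j$, further normalizing the components at the origin. The two-sided comparison transports the Cauchy property to $(f_{n,1})$ and $(f_{n,2})$ in $\besov_{p,\mathbb C}$, which is Banach modulo constants under $\rho_p$ by the classical theory recorded in \cite{kehe}. Choosing limits $f_1, f_2 \in \besov_{p,\mathbb C}$ and defining $f := P_i[f_1 + f_2 j] \in \mathcal{SR}(\mathbb B)$, a second application of the comparison yields $\rho_{p,i}(f_n - f) \to 0$, i.e.\ $[f_n] \to [f]$ in the quotient. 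The only subtlety lies in the coherent choice of representatives so that the splitting passes through the quotients consistently; the normalization at the origin pins down both the quaternionic and the complex representatives uniquely and dispatches this point.
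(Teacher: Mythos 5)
Your proposal is correct and follows essentially the same route as the paper: split $Q_i[f]=f_1+f_2j$, establish the two-sided comparison between $\rho_{p,i}(f)$ and the complex seminorms $\rho_p(f_1),\rho_p(f_2)$ to import completeness from the classical Besov space, and prove M\"obius invariance via the chain rule on the slice together with $1-|\tau_a(z)|^2=(1-|z|^2)|\tau_a'(z)|$ and the $T_a$-invariance of $d\lambda_i$. You are somewhat more explicit than the paper about the quotient structure (normalizing representatives at the origin) and about why $\rho_{p,i}(f)=0$ forces $f$ to be constant, but these are elaborations of the same argument rather than a different approach.
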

\begin{proof}
It is clear, that $\rho_{p,i}$ actually is a seminorm. Let $j\in\mathbb S^2$ with $i\perp j$ and for $f\in\besov_{p,i}$ let $f_1,f_2\in\besov_p$ such that $ Q_i[f] = f_1 + f_2j$. Then, the inequality
$$|f'_k(z)|^p \leq |\partial_{x_0}f(z)|^p \leq 2^{p-1}\left(|f'_1(z)|^p + |f'_2(z)|^p\right)\quad \text{for }k=1,2, $$
implies
$$\rho_{p}(f_k)^p \leq \rho_{p,i}(f)^p \leq 2^{p-1}\big(\rho_{p}(f_1)^p + \rho_{p}(f_2)^p\big)\quad \text{for }k=1,2, $$
where $\rho_{p}$ is the corresponding seminorm on the complex Besov space $\besov_{p, \mathbb{C}}$ on $\mathbb B_i$. Thus, as $\rho_{p}$ is a complete seminorm on the complex Besov space (see Theorem 5.18 in \cite{kehe}), the seminorm $\rho_{p,i}$ on is complete too.

Now we want to prove the M\"obius invariance of $\rho_{p,i}$, so let $a\in\mathbb B_i$. If we denote
$$(f\circ T_a)' = \partial_{0}  Q_i[f\circ_i T_a], \qquad f' = \partial_{x_0}  Q_i[f],\qquad\text{and}\qquad T_a' = \partial_{x_0} Q_i[T_a] , $$
we have
\[
\begin{split}
 |( f\circ T_a)'(z)| &=| (f_1\circ T_a)'(z ) +  (f_2\circ T_a)'(z ) j|
 \\
 &
 =|T_a'(z)|\cdot | f_1'( T_a(z ) )+  f_2' ( T_a(z ))j |
 \\
 &
 = |T_a'(z)|\cdot |f'(T_a(z))|.
 \end{split}
 \]
Now recall that $(1-|z|^2)|T_a'(z)| = 1- |T_a(z)|^2$ for $z\in\mathbb B_i$ as $ Q_i[T_a]$ is the usual complex M\"obius transformation on $\mathbb B_i$ associated with $a$. So, as $\lambda_i$ is invariant under $T_a$, we obtain
\begin{align*}
\rho_{p,i}(f\circ_iT_a)^p &= \int_{\mathbb B_i}     (1-|z|^2)^p |(f\circ T_a)'(z)|^p   \,   d\lambda_i (z)\\
& = \int_{\mathbb B_i}     (1-|z|^2)^p |T_a'(z)|^p\cdot |f'(T_a(z))|^p   \,   d\lambda_i (z)\\
&= \int_{\mathbb B_i}     (1-|T_a(z)|^2)^p |f'( T_a(z))|^p   \,   d\lambda_i (T_a(z)) = \rho_{p,i}(f)^p.
\end{align*}
\end{proof}

\begin{corollary}
Let $p>1$ and let $\rho_p(f)=\sup_{i\in\mathbb S^2}\rho_{p,i}(f)$. Then $\rho_p$ is a complete seminorm on $\besov_{p}$ such that  $\besov_{p}$ modulo the constant functions endowed with the norm induced by $\rho_{p}$ is a Banach space.
\end{corollary}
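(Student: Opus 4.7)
The plan is to deduce the corollary directly from Lemma \ref{lemma4.17} by showing that the seminorms $\rho_{p,i}$ are mutually equivalent with a constant independent of $i$, so that taking the supremum preserves all the desired properties. The key observation, extracted from the proof of Proposition \ref{prp413} applied with $n=1$, is the inequality
\begin{equation*}
\rho_{p,j}(f)^p \leq 2^{\max\{p,1\}}\,\rho_{p,i}(f)^p\qquad \text{for all } i,j\in\mathbb{S}^2,
\end{equation*}
which for $p>1$ gives $\rho_{p,j}(f)\leq 2\rho_{p,i}(f)$. Consequently
\begin{equation*}
\rho_{p,i}(f)\leq \rho_p(f)\leq 2\rho_{p,i}(f),
\end{equation*}
so that $\rho_p$ is finite precisely on $\besov_p$, and $\rho_p$ is equivalent to every $\rho_{p,i}$ on this space.

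Next I would verify that $\rho_p$ is a seminorm: positive homogeneity and the triangle inequality follow immediately from the same properties of each $\rho_{p,i}$ and the general fact that a pointwise supremum of seminorms is a seminorm. To identify the kernel, note that $\rho_p(f)=0$ forces $\rho_{p,i}(f)=0$ for every $i\in\mathbb{S}^2$; this means $\partial_{x_0}Q_i[f]\equiv 0$ on $\mathbb{B}_i$, so $Q_i[f]$ is constant on $\mathbb{B}_i$, and the Representation Formula then forces $f$ to be constant on $\mathbb{B}$. Hence $\rho_p$ induces a genuine norm on $\besov_p$ modulo constants.

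For completeness, let $([f_n])$ be a Cauchy sequence in $(\besov_p/\text{constants},\rho_p)$, and fix any $i\in\mathbb{S}^2$. Since $\rho_{p,i}\leq \rho_p$, the sequence is Cauchy in $(\besov_{p,i}/\text{constants},\rho_{p,i})$, which is complete by Lemma \ref{lemma4.17}. Therefore there exist a slice regular function $f$ on $\mathbb{B}$ and a sequence of constants $c_n\in\mathbb{H}$ such that $\rho_{p,i}(f_n-f-c_n)\to 0$. Applying the equivalence $\rho_p\leq 2\rho_{p,i}$ to the differences $f_n-f-c_n$ yields $f\in\besov_p$ and $\rho_p(f_n-f-c_n)\to 0$, so $[f_n]\to [f]$ in the quotient. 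This establishes completeness.

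The only non-routine ingredient is the uniform-in-$i$ equivalence of the seminorms, and this is exactly what is delivered by the proof of Proposition \ref{prp413}; once that is in hand, the corollary is a transparent consequence of Lemma \ref{lemma4.17} and I do not expect any genuine obstacle beyond keeping track of representatives modulo the kernel.
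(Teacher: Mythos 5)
Your proof is correct and follows the same route as the paper, which simply states that the corollary "follows from the previous Lemma"; you have filled in exactly the details the paper leaves implicit, namely the uniform slice-independence inequality $\rho_{p,j}(f)\leq 2\rho_{p,i}(f)$ (read off from the proof of Proposition \ref{prp413} with $n=1$), the identification of the kernel with the constants via the Representation Formula, and the transfer of completeness from a fixed slice. No gaps.
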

\begin{proof}
The result follows from the previous Lemma.
\end{proof}
\begin{remark}{\rm  We observe that, in general, $\rho_p(f\circ_iT_a)^p\neq \rho_{p}(f)^p$. In fact
for $w= x_0 + j x_1$ let $z_i=x_0 + i x_1$ and assume that $f'$ denotes
the derivative the restriction of $f$ to the plane $\mathbb C(i)$, then
\begin{align*}
\rho_p(f\circ_iT_a)^p &= \sup_{j\in\mathbb S} \rho_{p,j}(f\circ_iT_a) ^p = \sup_{j\in\mathbb S} \int_{\mathbb B_j}(1-|w|^2)^p|\partial_0f\circ_iT_a|^p d\lambda_j(w)\\
&=\sup_{j\in\mathbb S}\int_{\mathbb B_i}\frac{1}{2^p}(1-|z |^2)^p\big|(1-ji)(f\circ_iT_a)'(z ) + (1+ji)(f\circ_iT_a)'(\bar{z})\big|^p\,d\lambda_i(z) \\
&=\sup_{j\in\mathbb S}\int_{\mathbb B_i}\frac{1}{2^p}(1-|z|^2)^p\big|(1-ji)f'(T_a(z))T_a'(z) + (1+ji)f'(T_a(\bar{z}))T_a'(z)\big|^p\,d\lambda_i(z).
\end{align*}
But, in general, it is $|T_a'(z)| \neq |T_a'(\bar{z})|$ thus we obtain
\begin{align*} \rho_p(f\circ_iT_a)^p&\neq\sup_{j\in\mathbb S}\int_{\mathbb B_i}\frac{1}{2^p}(1-|z|^2)^p\,|T_a'(z)|^p\,\big|(1-ji)f'(T_a(z)) + (1+ji)f'(T_a(\bar{z}))\big|^p\,d\lambda_i(z) \\
&= \sup_{j\in\mathbb S}\int_{\mathbb B_i}\frac{1}{2^p}(1-|T_a(z)|^2)^p\big|(1-ji)f'(T_a(z)) + (1+ji)f'(T_a(\bar{z}))\big|^p\,d\lambda_i(T_a(z))\\
&= \sup_{j\in\mathbb S}\rho_{p,i}(f)^p = \rho_{p}(f)^p,\end{align*}
where we used the fact that $(1-|T_a(z)|^2) = (1-|z|^2)|T_a'(z)|$
and that $d\lambda_i$ is invariant under $T_a$.
}
\end{remark}

\begin{corollary}
For $p>1$, the Besov space $\besov_{p}$ is a Banach space with the norm
$$\|f\|_{\besov_{p}} = |f(0)| + \sup_{i\in \mathbb{S}^2}\left[    \int_{\mathbb B_i}     (1-|z|^2)^p |\partial_{x_0} f(z)|^p      d\lambda_i (z)      \right]^{\frac{1}{p}}.$$
\end{corollary}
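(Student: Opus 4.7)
The plan is to leverage the previous corollary, which already gives that $\rho_p$ is a complete seminorm on $\besov_p$ with the quotient by the constants being a Banach space. The present statement only needs to upgrade this from a quotient-norm statement to an honest norm statement on $\besov_p$ itself, by observing that the extra term $|f(0)|$ serves as a ``selector'' of a canonical representative for each equivalence class modulo constants.

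First I would verify that $\|\cdot\|_{\besov_p}$ is a norm. Subadditivity, absolute homogeneity and nonnegativity are immediate since $|f(0)|$ is a seminorm and $\rho_p$ is a seminorm by the previous corollary. For definiteness, if $\|f\|_{\besov_p}=0$ then both $\rho_p(f)=0$ and $|f(0)|=0$. The first equality, together with the fact that $\besov_p/\mathbb H$ is a Banach space under $\rho_p$ (so $\rho_p$ vanishes exactly on the constants), forces $f$ to be a constant quaternionic-valued function; the second equality then fixes that constant to be $0$, hence $f\equiv 0$.

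Next I would prove completeness. Let $(f_n)\subset\besov_p$ be Cauchy with respect to $\|\cdot\|_{\besov_p}$. Splitting the norm, the sequence $(f_n(0))$ is Cauchy in $\mathbb H$, so it converges to some $c\in\mathbb H$, and the sequence $(f_n)$ is Cauchy for the seminorm $\rho_p$. By the previous corollary, the quotient of $\besov_p$ by the constants is complete under the induced norm, so there exists $g\in\besov_p$ such that $\rho_p(f_n-g)\to 0$ as $n\to\infty$. Define $f:=g-g(0)+c\in\besov_p$; since $f$ and $g$ differ only by the constant $c-g(0)$, we have $\rho_p(f)=\rho_p(g)$ and more generally $\rho_p(f_n-f)=\rho_p(f_n-g)\to 0$. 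Moreover $f(0)=c$, so $|f_n(0)-f(0)|=|f_n(0)-c|\to 0$. Adding the two estimates gives $\|f_n-f\|_{\besov_p}\to 0$.

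The only mildly delicate step is the identification of the null space of $\rho_p$ with the constants; everything else is a routine transfer of the quotient completeness to the absolute setting via the continuous linear functional $f\mapsto f(0)$, which is well defined on $\besov_p$ since slice regular functions are in particular pointwise defined on $\mathbb B$. No genuinely new estimate is required beyond what was already established in Lemma \ref{lemma4.17} and its corollary.
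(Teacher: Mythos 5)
Your proposal is correct and follows essentially the same route as the paper, which simply declares the result an immediate consequence of the preceding lemma and corollary on the complete seminorm $\rho_p$; you have merely written out the standard details (identifying the null space of $\rho_p$ with the constants, then transferring quotient completeness to $\besov_p$ via the term $|f(0)|$) that the paper leaves implicit.
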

\begin{proof}
It is an immediate consequence of  the above lemma and of the fact that $f\in\besov_p$ if and only if $f\in\besov_{p,i}$.
\end{proof}
\begin{definition} A seminorm $\rho$ is said to be
 $i$-M\"obius invariant on $\besov_{p,i}$ if it satisfies the condition in Lemma \ref{lemma4.17}.
\end{definition}

According to Definition \ref{Defrho}, we now study the case $0<p\leq 1$. We have the following result:
\begin{proposition}
Let $i\in\mathbb S^2$ and $0<p\leq1$ and let $n\in\mathbb N$ with $np>1$. Then   $\rho_{p,n,i}(f)$ is finite
 for any $f\in\besov_{p,i}$. Furthermore, we have
\begin{equation}\label{SNinvariant}\rho_{p,n,i}(f \circ_i T_a )=\rho_{p,n,i}(f), \quad \text{for all } a\in\mathbb B_i. \end{equation}
\end{proposition}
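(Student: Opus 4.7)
My plan is to reduce both claims to the corresponding statements for the complex Besov space $\besov_{p,\mathbb{C}}$ via the Splitting Lemma. Fix $j\in\mathbb{S}^2$ with $j\perp i$ and write $Q_i[f]=f_1+f_2j$ with $f_1,f_2\colon\mathbb{B}_i\to\mathbb{C}(i)$ holomorphic; by Remark \ref{besovf1f2} both $f_1$ and $f_2$ belong to $\besov_{p,\mathbb{C}}$.

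For the finiteness of $\rho_{p,n,i}(f)$, the decisive ingredient is the elementary inequality $(a^2+b^2)^{p/2}\leq a^p+b^p$, which holds for $a,b\geq 0$ precisely in the range $0<p\leq 1$. Applied to $|Q_i[f](z)|^2=|f_1(z)|^2+|f_2(z)|^2$ it yields $\sup_{z\in\mathbb{B}_i}|f(z)|\leq\sup|f_1|+\sup|f_2|$, which is finite since for $0<p\leq 1$ the complex Besov space embeds into $H^{\infty}(\mathbb{B}_i)$ (see, e.g.\ \cite{kehe}). The same inequality applied to the identity
\begin{equation*}
\partial_{x_0}^{n}(f\circ_i T_a)(z)=(f_1\circ Q_i[T_a])^{(n)}(z)+(f_2\circ Q_i[T_a])^{(n)}(z)\,j,\qquad z\in\mathbb{B}_i,
\end{equation*}
gives
\begin{equation*}
\sup_{a\in\mathbb{B}_i}\int_{\mathbb{B}_i}(1-|z|^2)^{np}|\partial_{x_0}^{n}(f\circ_i T_a)|^{p}d\lambda_i\leq\sum_{k=1,2}\sup_{a\in\mathbb{B}_i}\int_{\mathbb{B}_i}(1-|z|^2)^{np}|(f_k\circ Q_i[T_a])^{(n)}|^{p}d\lambda_i,
\end{equation*}
and each summand on the right is the standard M\"obius-invariant Besov seminorm on $\besov_{p,\mathbb{C}}$ evaluated at $f_k$, which is finite by the complex theory (Theorem 5.21 in \cite{kehe}).

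For the invariance \eqref{SNinvariant}, the supremum term behaves correctly thanks to the bijectivity of $Q_i[T_a]$ on $\mathbb{B}_i$. For the integral term I would use the associativity of $\circ_i$, a direct consequence of $Q_i\circ P_i=\mathcal{I}$, which gives $(f\circ_i T_a)\circ_i T_b=f\circ_i(T_a\circ_i T_b)$ and $Q_i[T_a\circ_i T_b]=Q_i[T_a]\circ Q_i[T_b]$ on $\mathbb{B}_i$. As $b$ ranges over $\mathbb{B}_i$ with $a$ fixed, the composition $Q_i[T_a]\circ Q_i[T_b]$ sweeps out the same family of M\"obius self-maps of $\mathbb{B}_i$ as $Q_i[T_c]$ does, modulo a rotation $\phi_\theta(z)=e^{i\theta}z$; the rotation can be absorbed by the substitution $z\mapsto e^{i\theta}z$ since $d\lambda_i$ is rotation-invariant and the integrand depends only on moduli. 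Combined with the M\"obius invariance of the complex seminorm applied separately to $f_1$ and $f_2$ (Theorem 5.21 in \cite{kehe}), this produces the desired equality of the two suprema over $b$.

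The main technical hurdle is that the integrand $(|(f_1\circ\psi)^{(n)}|^2+|(f_2\circ\psi)^{(n)}|^2)^{p/2}$ couples the two components in a nonlinear way, so \eqref{SNinvariant} cannot be obtained pointwise from the corresponding identities for $f_1$ and $f_2$; it must be verified after integration by repeating the Fa\`a di Bruno and change-of-variables argument of the complex case at the level of $\mathbb{H}$-valued integrands, making essential use of $(1-|z|^2)|\psi'(z)|=1-|\psi(z)|^2$ for M\"obius self-maps $\psi$ of $\mathbb{B}_i$ and of the invariance of $d\lambda_i$ under such maps.
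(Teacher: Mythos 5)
Your proof is correct and follows essentially the same route as the paper: split $Q_i[f]=f_1+f_2j$, use the subadditivity of $t\mapsto t^p$ for $0<p\leq 1$ to dominate both terms of $\rho_{p,n,i}(f)$ by the corresponding M\"obius-invariant seminorms of $f_1,f_2$ in the complex Besov space, and obtain \eqref{SNinvariant} from the fact that precomposition with $T_a$ merely permutes the family over which the suprema are taken. The paper dispatches the invariance in one line (it is ``obvious'' from the definition since $T_a$ maps $\mathbb B_i$ bijectively onto itself), so your additional care with the rotation factor arising in $Q_i[T_a]\circ Q_i[T_b]$ and its absorption via the rotation-invariance of $d\lambda_i$ is a harmless elaboration of the same argument rather than a different approach.
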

\begin{proof}
As for $a\in\mathbb B_i$ the slice regular M\"obius transformation maps $\mathbb B_i$ bijectively onto itself, by the definition of
$\rho_{{p,n,i}}$, it is obvious that \eqref{SNinvariant} holds, provided that $\rho_{{p,n,i}}$ is finite.

Now let $f\in\besov_{p,i}$. To show that $\rho_{p,n,i}(f)$ is finite, we chose again $j\in\mathbb S^2$ with $i\perp j$ and apply the Splitting Lemma to obtain functions $f_1$ and $f_2$ in the complex Besov space $\besov_{p, \mathbb{C}}$ on $\mathbb B_i$ such that  $ Q_i[f] = f_1 + f_2 j$.
From Theorem 5.18 in \cite{kehe}, we know that, for any function $g$ in $\besov_p$, we have
$$\rho_{p,n}(g) = \sup_{z\in\mathbb B_i} |g(z)|    +  \sup_{a\in\mathbb B_i} \left[ \int_{\mathbb  B_i}     (1-|z|^2)^{np} |( g\circ \varphi_a)^{(n)}(z)|^p   d \lambda_i(z)   \right]^{\frac{1}{p}} <\infty, $$
where $\varphi_a$ denotes the complex M\"obius transformation associated with $a$. Thus, on one hand we have
$$ \sup_{z\in\mathbb B_i}|f(z)|  \leq \sup_{z\in\mathbb B_i}|f_1(z)| +\sup_{z\in\mathbb B_i}|f_2(z)|\leq \rho_{p,n}(f_1) +\rho_{p,n}(f_2)< \infty.$$
 On the other hand, as $0<p\leq1$, we have $(\alpha+\beta)^p <\alpha^p + \beta^p$   for $\alpha,\beta>0$. Moreover, for any $a$ in $\mathbb B_i$, we have $\varphi_a =  Q_i[T_a]$. As $x\mapsto x^{1/p}$ is increasing on $\mathbb R^+$, we can exchange it with the supremum and so we obtain
 \begin{multline*}
 \sup_{a\in\mathbb B_i} \int_{\mathbb  B_i}     (1-|z|^2)^{np} |\partial_{x_0}^n( f\circ_iT_a)(z)|^p   d \lambda_i(z)   \leq \sup_{a\in\mathbb B_i} \int_{\mathbb  B_i}     (1-|z|^2)^{np} |( f_1\circ \varphi_a)^{(n)}(z)|^p   d \lambda_i(z)  \\
+  \sup_{a\in\mathbb B_i} \int_{\mathbb  B_i}     (1-|z|^2)^{np} |( f_2\circ \varphi_a)^{(n)}(z)|^p   d \lambda_i(z)  \leq \rho_{p,n}(f_1)^p +\rho_{p,n}(f_2)^p<\infty.
 \end{multline*}
Putting all this together, we obtain that $\rho_{p,n,i}(f)$ is finite.
\end{proof}

The space $\besov_{1}$ requires some special attention.

\begin{proposition}
Let $f\in\mathcal{SR}(\mathbb B)$ and let $i\in\mathbb S^2$. Then the following facts are equivalent
\begin{enumerate}[(i)]
\item \label{prop_B1_1} $f$ belongs to $\besov_{1}$.
\item \label{prop_B1_2} There exists a sequence $(\gamma_k)_{k\geq 0}\in \ell^1(\mathbb H)$ and a  sequence $(a_k)_{k\geq 1}\in\mathbb B_i$ such that
\begin{equation}\label{besov_l1sum}    f(q) = \gamma_0 + \sum_{k=1}^{\infty} T_{a_k}(q)\gamma_k.\end{equation}
 \item \label{prop_B1_3}There exists a finite $\mathbb H$-valued Borel measure $\mu$ on $\mathbb B_i$ such that
$$       f(q) =   \int_{\mathbb B_i} T_z(q)\,d\mu(z).$$
\end{enumerate}
\end{proposition}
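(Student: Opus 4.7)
The plan is to reduce each equivalence to its counterpart for the classical complex analytic Besov space $\besov_{1,\mathbb{C}}$ on $\mathbb{B}_i$ (developed, e.g., in \cite{kehe}), using the Splitting Lemma together with the identification $Q_i[T_a]=\varphi_a$ on $\mathbb{B}_i$ supplied by Proposition~\ref{Moeb_prop}(\ref{Moeb_red}), where $\varphi_a(z)=(a-z)/(1-\bar a z)$ is the usual disc Möbius map.

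First I fix $j\in\mathbb{S}^2$ orthogonal to $i$ and apply the Splitting Lemma to write $Q_i[f]=f_1+f_2 j$ with holomorphic $f_1,f_2\colon\mathbb{B}_i\to\mathbb{C}(i)$. By Remark~\ref{besovf1f2} combined with Proposition~\ref{prp413}, the condition $f\in\besov_1$ is equivalent to $f_1,f_2\in\besov_{1,\mathbb{C}}$. All three conditions of the statement can then be read off componentwise from $f_1$ and $f_2$ and reassembled by applying the right $\mathbb{H}$-linear operator $P_i$, using the identity $P_i[\varphi_{a}]=T_{a}$ for $a\in\mathbb{B}_i$ (which follows from $P_i\circ Q_i=\mathcal{I}_{\mathcal{SR}(\mathbb{B})}$).

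For (\ref{prop_B1_1})$\Rightarrow$(\ref{prop_B1_2}) I would invoke the complex atomic decomposition applied simultaneously to $f_1$ and $f_2$ with a common sequence $(a_k)_{k\geq 1}\subset\mathbb{B}_i$ (such a sampling sequence is available in the same way it is used in the proof of Proposition~\ref{pro2}), obtaining $f_\ell(z)=\gamma_{\ell,0}+\sum_{k\geq 1}\varphi_{a_k}(z)\gamma_{\ell,k}$ with $(\gamma_{\ell,k})_{k\geq 0}\in\ell^1(\mathbb{C}(i))$ for $\ell=1,2$. Setting $\gamma_k=\gamma_{1,k}+\gamma_{2,k}j$ gives $(\gamma_k)\in\ell^1(\mathbb{H})$, and applying $P_i$ termwise yields \eqref{besov_l1sum}. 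For the reverse (\ref{prop_B1_2})$\Rightarrow$(\ref{prop_B1_1}), one restricts \eqref{besov_l1sum} to $\mathbb{B}_i$ and splits each $\gamma_k=\alpha_k+\beta_k j$ with $\alpha_k,\beta_k\in\mathbb{C}(i)$; the $\mathbb{C}(i)$- and $\mathbb{C}(i)j$-parts are atomic decompositions of $f_1$ and $f_2$ in $\besov_{1,\mathbb{C}}$, so $f\in\besov_1$. The equivalence (\ref{prop_B1_1})$\Leftrightarrow$(\ref{prop_B1_3}) is handled in the same style using the measure-theoretic characterization of $\besov_{1,\mathbb{C}}$: write $\mu=\mu_1+\mu_2 j$ with $\mu_1,\mu_2$ finite $\mathbb{C}(i)$-valued Borel measures on $\mathbb{B}_i$ associated to $f_1$ and $f_2$, and pass $P_i$ inside the integral. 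The implication (\ref{prop_B1_2})$\Leftrightarrow$(\ref{prop_B1_3}) is then a consequence of transitivity, although one could alternatively prove it directly by taking atomic measures on one side and applying the complex measure $\leftrightarrow$ series equivalence on the other.

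The main obstacle will be the bookkeeping that makes the reduction clean: securing a common atomizing sequence $(a_k)$ for both components, correctly accounting for the isolated constant term $\gamma_0$ (necessary since $T_a(0)=a\neq 0$ in general, so a pure series $\sum_{k\geq 1}T_{a_k}\gamma_k$ cannot by itself reach every value at $0$), and justifying that $P_i$ commutes with the infinite sum in \eqref{besov_l1sum} and with the $\mathbb{H}$-valued integral against $\mu$. The latter rests on the continuity and right $\mathbb{H}$-linearity of $P_i$, while the absolute convergence of $\sum_{k\geq 1}T_{a_k}\gamma_k$ in $\besov_1$ is transported back through $P_i$ from the corresponding norm estimates in $\besov_{1,\mathbb{C}}$ for $\sum_{k\geq 1}\varphi_{a_k}\gamma_{\ell,k}$, $\ell=1,2$.
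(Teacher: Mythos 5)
Your proposal is correct, and for the measure characterization and the direction (\ref{prop_B1_2})$\Rightarrow$(\ref{prop_B1_1}) it coincides with the paper's argument: split $Q_i[f]=f_1+f_2j$, transfer each condition componentwise to the complex space $\besov_{1,\mathbb C}$ via Theorem 5.19 of \cite{kehe}, write $\mu=\mu_1+\mu_2j$, and reassemble with $P_i$. Where you genuinely diverge is (\ref{prop_B1_1})$\Rightarrow$(\ref{prop_B1_2}): you invoke the complex M\"obius-atom decomposition of $\besov_{1,\mathbb C}$ separately for $f_1$ and $f_2$ and then worry about securing a common sequence $(a_k)$, whereas the paper avoids the complex M\"obius-atom theorem here altogether by reusing its own Proposition \ref{pro2} with $b=1$ together with the identity $\frac{1-|a|^2}{1-z\bar a}=1-T_a(z)\bar a$, which turns each kernel atom $P_i\bigl[(1-|a_k|^2)/(1-z\overline{a_k})\bigr]d_k$ into $d_k-T_{a_k}\overline{a_k}d_k$ and hence yields \eqref{besov_l1sum} directly with $\gamma_0=\sum_k d_k$ and $\gamma_k=-\overline{a_k}d_k$. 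The paper's route gives a single $f$-independent sequence $(a_k)$ for free and makes the common-sequence issue you flag as the main obstacle disappear; in your version that issue is in any case harmless, since two $f$-dependent sequences can be merged by interleaving with zero coefficients, exactly as the paper itself does later when proving the triangle inequality for $\|\cdot\|_{\besov_{1,i}}$. The remaining structural difference --- you prove (\ref{prop_B1_1})$\Leftrightarrow$(\ref{prop_B1_2}) and (\ref{prop_B1_1})$\Leftrightarrow$(\ref{prop_B1_3}) with (\ref{prop_B1_1}) as a hub, while the paper closes the cycle (\ref{prop_B1_1})$\Rightarrow$(\ref{prop_B1_2})$\Rightarrow$(\ref{prop_B1_3})$\Rightarrow$(\ref{prop_B1_1}) --- is immaterial.
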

\begin{proof}
We first show that (\ref{prop_B1_1}) implies (\ref{prop_B1_2}), so let $f\in\besov_{1}$. For $a\in\mathbb B_i$,  let $T_a(z) = \frac{a-z}{1-z\bar{a}}$ be the complex M\"obius transformation on $\mathbb B_i$ associated with $a$. Then we have
\begin{equation*}
\frac{1-|a|^2}{1-z\bar a } = 1 -  T_a(z)\bar a.
\end{equation*}
Thus, we can apply Proposition \ref{pro2} with $b=1$ and obtain sequences $(d_k)_{k\geq 1}\in \ell^1(\mathbb H)$ and $(a_k)_{k\geq 1}\in\mathbb B_i$ such that
\begin{equation*}
f = \sum_{k = 1}^{\infty} P_i\left[\frac{1-|a_k|^2}{1-z\overline{a_k}}\right]d_k = \sum_{k = 1}^{\infty} P_i\left[1 - T_{a_k}\overline{a_k}\right]d_k = \sum_{k = 1}^{\infty} d_k -  \sum_{k = 1}^{\infty} P_i[T_{a_k}]\overline{a_k}d_k.
\end{equation*}
So, if we define $\gamma_0 = \sum_{k = 1}^{\infty} d_k$ and $\gamma_k =- \overline{a_k}d_k$ for $k\geq1$, as $P_i[T_{a_k}] = T_{a_k}$, we obtain \eqref{besov_l1sum}. Moreover, we have $(\gamma_k)_{k\geq0}\in\ell^1(\mathbb H)$ because $|a_k|<1$ and so
$$ \sum_{k=0}^{\infty} |\gamma_k| = |\gamma_0| + \sum_{k=1}^{\infty} |a_k||d_k| \leq 2 \sum_{k=1}^{\infty}|d_k|<\infty.$$

 Now we will show that (\ref{prop_B1_2}) implies (\ref{prop_B1_3}), so let us suppose that (\ref{prop_B1_2}) holds.  Then there exist sequences $(\gamma_{1,k})_{k\in\mathbb N_0},(\gamma_{2,k})_{k\in\mathbb N_0} \subset\mathbb{C}(i)$ such that $\gamma_k = \gamma_{1,k} + \gamma_{2,k}j$ and, as $|\gamma_{1,k}|\leq|\gamma_{k}|$ and $|\gamma_{2,k}|\leq|\gamma_{k}|$ for $k\geq0$,  they are in $\ell^1(\mathbb C(i))$. Moreover,  let $j\in\mathbb S^2$ with $i\perp j$ and let $f_1,f_2\colon \mathbb B_i\to \mathbb C(i)$ be holomorphic functions such that $ Q_i[f] = f_1 + f_2j$. Then, as $\mathcal{Q}_i[T_a] = T_a$ for $a\in\mathbb B_i$, we have,
 $$ f_1(z) + f_2(z)j =  Q_i[f](z) = \gamma_{1,0} + \sum_{k=1}^{\infty} T_{a_k}(z)\gamma_{1,k} +\left( \gamma_{2,0} + \sum_{k=1}^{\infty} T_{a_k}(z)\gamma_{2,k}\right)j.$$
 So $f_l(z) = \gamma_{l,0} + \sum_{k=1}^{\infty} T_{a_k}(z)\gamma_{l,k}$ for $l=1,2$. By Theorem 5.19 in \cite{kehe}, there exist two finite Borel measures $\mu_1$ and $\mu_2$ on $\mathbb B_i$ with values in $\mathbb C(i)$ such that
$$ f_1(z) = \int_{\mathbb B_i}T_w(z) d\mu_1(w)\qquad\text{and}\qquad f_2(z) = \int_{\mathbb B_i}T_w(z) d\mu_2(w).$$
 Thus, if we set $\mu = \mu_1 + \mu_2j$, we have
 \begin{equation*}
 f(q) = P_i[f_1 +f_2j](q) = \int_{\mathbb B_i}T_w(q) d\mu_1(w)+  \int_{\mathbb B_i}T_w(q) d\mu_2(w)j = \int_{\mathbb B_i}T_w(q) d\mu(w).
 \end{equation*}

 Finally, we will show that (\ref{prop_B1_3}) implies (\ref{prop_B1_1}), so let us assume that (\ref{prop_B1_3}) holds. Then there exist two finite $\mathbb C(i)$-valued Borel measures $\mu_1,\mu_2$ such that $\mu = \mu_1 + \mu_2j$. For all $z\in\mathbb B_i$, we have
\begin{equation*}
f_1(z) + f_2(z)j =  Q_i[f](z) = \int_{\mathbb B_i}T_w(q) d\mu_1(w)+  \int_{\mathbb B_i}T_w(q) d\mu_2(w)j,
\end{equation*}
so $f_{l}(z) = \int_{\mathbb B_i}T_w(z) d\mu_l(w), l = 1,2$. Because of Theorem 5.19 in \cite{kehe} this implies that $f_1$ and $f_2$
lie in the complex Besov space $\besov_{1, \mathbb{C}}$ which is equivalent to $f\in\besov_{1,i}$ and so $f\in\besov_{1}$.

\end{proof}

As in the complex case, the representation \eqref{besov_l1sum} allows to define an $i$-M\"obius invariant Banach space structure on $\besov_{1,i}$.

\begin{remark}
{\rm
In the next result we introduce a norm on  $\besov_{1,i}$ which is different from the one in Definition \ref{Defrho}.
}
\end{remark}

\begin{proposition}
Let $i\in\mathbb S^2$. Then $\besov_{1,i}$ is a Banach space with the norm
\begin{equation}\label{norm_Besov1}\|f\|_{\besov_{1,i}} = \inf\left\{\sum_{k=0}^{\infty}|\gamma_k|\  \Big|\ \exists (a_k)_{k\geq1}\subset\mathbb B_i\colon  f = \gamma_0 + \sum_{k=1}^{\infty} T_{a_k}\gamma_k\right\}.\end{equation}
 Furthermore, for all $a\in\mathbb B_i$, the following equation holds
\begin{equation}\label{l1_MoebInv} \|f\circ_iT_a\|_{\besov_{1,i}} = \|f\|_{\besov_{1,i}}.\end{equation}
\end{proposition}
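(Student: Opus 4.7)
The plan is to leverage the Splitting Lemma systematically, reducing every claim about the quaternionic norm $\|\cdot\|_{\besov_{1,i}}$ to the analogous statement for the complex Besov norm on $\besov_{1,\mathbb{C}}$, for which the corresponding Banach space structure is known (Theorem 5.19 in \cite{kehe}).

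First I verify that \eqref{norm_Besov1} actually defines a norm. The previous proposition guarantees that the set over which the infimum is taken is non-empty, so $\|f\|_{\besov_{1,i}}$ is finite for every $f\in\besov_{1,i}$. Positive homogeneity is immediate and the triangle inequality follows by concatenating representations of $f$ and $g$ (replacing the two constant terms by their sum). For non-degeneracy, I use that $|T_{a_k}(q)|\leq 1$ on $\mathbb B$ to obtain the pointwise bound $|f(q)|\leq\sum_{k\geq 0}|\gamma_k|$; infimizing over representations gives $\sup_{q\in\mathbb B}|f(q)|\leq\|f\|_{\besov_{1,i}}$, so $\|f\|_{\besov_{1,i}}=0$ forces $f\equiv 0$.

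Next I fix $j\in\mathbb S^2$ with $j\perp i$ and, for $f\in\besov_{1,i}$, write $Q_i[f]=f_1+f_2j$ with $f_1,f_2\colon\mathbb B_i\to\mathbb C(i)$ holomorphic. Splitting each quaternionic coefficient as $\gamma_k=c_{1,k}+c_{2,k}j$ with $c_{l,k}\in\mathbb C(i)$, a representation of $f$ restricts on $\mathbb B_i$ to simultaneous representations $f_l=c_{l,0}+\sum_k T_{a_k}c_{l,k}$ of $f_1,f_2$ in the sense of Theorem~5.19 in \cite{kehe}, with $\sum_k|c_{l,k}|\leq\sum_k|\gamma_k|$. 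Conversely, concatenating any pair of complex representations of $f_1$ and $f_2$ yields a quaternionic representation of $f$ of comparable total mass (right-multiplication by $j$ preserves absolute value). This produces the two-sided estimate
\begin{equation*}
\max\{\|f_1\|_{\besov_{1,\mathbb{C}}},\|f_2\|_{\besov_{1,\mathbb{C}}}\}\leq\|f\|_{\besov_{1,i}}\leq\|f_1\|_{\besov_{1,\mathbb{C}}}+\|f_2\|_{\besov_{1,\mathbb{C}}}.
\end{equation*}
Completeness follows at once: a Cauchy sequence $(f_n)$ in $\besov_{1,i}$ has components $(f_{n,1})$, $(f_{n,2})$ Cauchy in $\besov_{1,\mathbb{C}}$ by the left inequality; these converge to limits $f_1,f_2\in\besov_{1,\mathbb{C}}$ by completeness of the complex Besov space, and $f:=P_i[f_1+f_2j]$ is the $\besov_{1,i}$-limit of $(f_n)$ by the right inequality.

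Finally, for the M\"obius invariance \eqref{l1_MoebInv}, I observe that for each $k$ the restriction of $T_{a_k}\circ_i T_a$ to $\mathbb B_i$ is a disc automorphism of $\mathbb B_i$, hence has the form $z\mapsto\lambda_k T_{b_k}(z)$ for some $b_k\in\mathbb B_i$ and some unimodular $\lambda_k\in\mathbb C(i)$. Because $\lambda_k$ and $T_{b_k}(z)$ both lie in the commutative slice $\mathbb C(i)$, one has $(T_{a_k}\gamma_k)\circ_i T_a=T_{b_k}(\lambda_k\gamma_k)$ on $\mathbb B_i$, and by uniqueness of slice regular extension also on all of $\mathbb B$. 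Applying this term by term to any representation $f=\gamma_0+\sum_k T_{a_k}\gamma_k$ produces a representation $f\circ_i T_a=\gamma_0+\sum_k T_{b_k}(\lambda_k\gamma_k)$ with exactly the same $\ell^1$ mass of coefficients. Taking the infimum yields $\|f\circ_i T_a\|_{\besov_{1,i}}\leq\|f\|_{\besov_{1,i}}$, and the reverse inequality follows by applying the same bound to $f\circ_i T_a$ together with $(f\circ_i T_a)\circ_i T_a=f$, a consequence of the involutive character of $T_a$ on $\mathbb B_i$ combined with the definition of $\circ_i$.

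The main technical obstacle is the bookkeeping in this last step: one has to check that the unimodular factors $\lambda_k$ can genuinely be absorbed into the quaternionic coefficients without leaving the admissible representation class, and that the resulting series $\sum_k T_{b_k}(\lambda_k\gamma_k)$ converges on all of $\mathbb B$ rather than only on the slice $\mathbb B_i$ where the decomposition was derived. Both points rest on the uniform bound $|T_c(q)|\leq 1$ for $q\in\mathbb B$ and on the commutativity of $\lambda_k$ with $T_{b_k}(z)$ inside $\mathbb C(i)$; granting these, the slice-regular extension of the reindexed series is automatic.
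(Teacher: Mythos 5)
Your proof is correct and takes essentially the same route as the paper: norm axioms via concatenation of representations, a two-sided comparison $\max_l\|f_l\|_{\besov_{1,\mathbb C}}\le\|f\|_{\besov_{1,i}}\le\|f_1\|_{\besov_{1,\mathbb C}}+\|f_2\|_{\besov_{1,\mathbb C}}$ through the Splitting Lemma to import completeness from the complex Besov space, and M\"obius invariance by reindexing representations under $\circ_i T_a$. You are in fact slightly more careful than the paper on the last step, which asserts that $T_{b_k}\circ_i T_a$ is again of the form $T_{c_k}$, whereas you correctly account for the unimodular rotation factor $\lambda_k$ and absorb it into the coefficients using $|\lambda_k\gamma_k|=|\gamma_k|$.
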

\begin{proof}
It is clear that $\| f \lambda \|_{\besov_{1,i}} = \| f \|_{\besov_{1,i}}|\lambda|$ for any $\lambda\in\mathbb H$ and that $\| f \|_{\besov_{1,i}} \geq 0$ with equality if and only if $f=0$.  Now let $f,g\in\besov_{1,i}$ and let $ f +g = \gamma_0 + \sum_{k=1}^{\infty} T_{a_k}\gamma_k$. Then $ f = \omega_0 + \sum_{k=1}^{\infty} T_{a_k}\omega_k$  if and only if $ g = \upsilon_0 + \sum_{k=1}^{\infty} T_{a_k}\upsilon_k$ with $\upsilon_k = \gamma_k - \omega_k$ and so we have
\begin{align}
\notag \|f+&g\|_{\besov_{1,i}}= \inf\left\{\sum_{k=0}^{\infty} |\omega_k + \upsilon_k|\  \Big|\ \exists (a_k)_{k\geq1}\subset\mathbb B_i\colon f +g = \omega_0 + \upsilon_0+ \sum_{k=1}^{\infty}T_{a_k}(\omega_k + \upsilon_k)\right\}
\\
&
\label{B1triangle1}\leq \inf\left\{\sum_{k=0}^{\infty} |\omega_k| + \sum_{k=0}^{\infty} |\upsilon_k|\  \Big|\ \exists (a_k)_{k\geq1}\subset\mathbb B_i\colon f = \omega_0+ \sum_{k=1}^{\infty}T_{a_k}\omega_k,\ g = \upsilon_0 + \sum_{k=1}^{\infty}T_{a_k}\upsilon_k\right\}.
\end{align}
On the other hand, if $f = \omega_0 + \sum_{k=1}^{\infty} T_{b_k}\omega_k$ and $g = \upsilon_0  + \sum_{k=1}^n T_{c_k}\upsilon_k$, then we can set $\gamma_0 = \omega_0 + \upsilon_0$ and
\begin{equation*}\left\{\begin{array}{lll}\gamma_{k} = \omega_n & a_k = b_n & \text{if }k = 2n\\
\gamma_k = \upsilon_n & a_k = c_n & \text{if }k = 2n-1
 \end{array}\right. \quad\text{for } n=1,2,3,\ldots ,\end{equation*}
and obtain $f+g = \gamma_0 + \sum_{k=1}^{\infty}T_{a_k}\gamma_k$. Thus, in \eqref{B1triangle1}, we can vary $\omega_k$ and $\upsilon_k$ independently, which allows us to split the infimum. So we obtain
\begin{multline*}
 \|f+g\|_{\besov_1,i}\leq \inf\left\{\sum_{k=0}^{\infty} |\omega_k|\  \Big|\ \exists (b_k)_{k\geq1}\subset\mathbb B_i\colon f = \omega_0+ \sum_{k=1}^{\infty}T_{b_k}\omega_k\right\}\\
  +\inf\left\{ \sum_{k=0}^{\infty} |\upsilon_k|\  \Big|\ \exists (c_k)_{k\geq1}\subset\mathbb B_i\colon  g = \upsilon_0 + \sum_{k=1}^{\infty}T_{c_k}\upsilon_k\right\} =  \|f\|_{\besov_1,i} + \|g\|_{\besov_1,i}.
\end{multline*}
Hence $\|\cdot\|_{\besov_{1,i}}$ is actually a norm.

Furthermore, for $a,b\in\mathbb B_i$ the $i$-composition $T_b\circ_iT_a$ is again a slice regular M\"obius transformation $T_c$ with $c\in\mathbb B_i$. Thus, $f = \gamma_0 + \sum_{k=1}^{\infty} T_{b_k}\gamma_k$ if and only if $f\circ_i T_a = \gamma_0 + \sum_{k=1}^{\infty}T_{c_k}\gamma_k$, where $T_{c_k} = T_{b_k}\circ T_{a}$. From the definition of $\|\cdot\|_{\besov_{1,i}}$, it is therefore clear that this norm is invariant under M\"obius transformations $T_a$ with $a\in\mathbb B_i$.

Finally, to show that the space is complete, we choose again $j\in\mathbb S^2$ such that $i\perp j$. Then, for any $\gamma_k\in\mathbb H$ there exist $\gamma_{k,1},\gamma_{k,2}\in\mathbb C(i)$ such that $\gamma_k = \alpha_k + \beta_kj$ and for any $f\in\besov_{1,i}$ there exist $f_1,f_2$ in the space $\besov_{1,i}$ on $\mathbb B_i$ such that $f = f_1 + f_2j$.
Furthermore, if $f = \lambda_{0} + \sum_{k=1}^{\infty}T_{a_k}\lambda_{k}$, then we have $f_{1} = \alpha_{0} + \sum_{k=1}^{\infty}\varphi_{a_k}\alpha_{k}$  and $f_{2} = \beta_{0} + \sum_{k=1}^{\infty}\varphi_{a_k}\beta_{k}$, where $\varphi_{a_k}$ denotes the complex M\"obius transformation on $\mathbb B_i$ associated with $a_k$.
Therefore, we get
\begin{align}&\|f\|_{\besov_1,i} \notag= \inf\left\{\sum_{k=0}^{\infty}|\alpha_k + \beta_kj|\  \Big|\ \exists (a_k)_{k\geq1}\subset\mathbb B_i \colon f = \alpha_k + \beta_0 j + \sum_{k=1}^{\infty} T_{a_k}(\alpha_k + \beta_k j)\right\}\\
&\leq \inf\left\{\sum_{k=0}^{\infty}|\alpha_k| +\sum_{k=0}^{\infty} |\beta_k|\  \Big|\ \exists (a_k)_{k\geq1}\subset\mathbb B_i \colon f_1 =  \alpha_0 + \sum_{k=1}^{\infty} \varphi_{a_k}\alpha_k,\  f_2 = \beta_0 +\sum_{k=1}^n\varphi_{a_k}\beta_k \right\}.
\end{align}
On the other hand, an argument like the one used for the triangle inequality shows that we can vary $\alpha_k$ and $\beta_k$ independently, which allows us to split the infimum. So we get
\begin{multline*}
\|f\|_{\besov_1,i} \leq \inf\left\{\sum_{k=0}^{\infty}|\alpha_k|\  \Big|\ \exists (b_k)_{k\geq1}\subset\mathbb B_i \colon f_1 =  \alpha_0 + \sum_{k=1}^{\infty} \varphi_{b_k}\alpha_k \right\} \\
+ \inf\left\{\sum_{k=0}^{\infty} |\beta_k|\  \Big|\ \exists (c_k)_{k\geq1}\subset\mathbb B_i \colon f_2 = \beta_0 + \sum_{k=1}^n\varphi_{c_k}\beta_k \right\} \leq \|f_1\|_{\besov_{1, \mathbb{C}}} + \|f_2\|_{\besov_{1, \mathbb{C}}},
\end{multline*}
where $\|\cdot\|_{\besov_{1, \mathbb{C}}}$ is norm on the complex Besov space $\besov_{1, \mathbb{C}}$ on $\mathbb B_i$, which is defined analogously to \eqref{norm_Besov1}. \\
On the other hand it is clear that $\|f_k\|_{\besov_{1, \mathbb{C}}} \leq \|f\|_{\besov_1,i}$ for $k=1,2$. Thus, putting all these inequalities together, we obtain
$$
\|f_k\|_{\besov_{1, \mathbb{C}}} \leq \|f\|_{\besov_1,i} \leq \|f_1\|_{\besov_{1, \mathbb{C}}} + \|f_2\|_{\besov_{1, \mathbb{C}}}.
$$
Therefore, for any Cauchy sequence $(f_n)_{n\geq 0}$ in $\besov_{1,i}$, the sequences of the component functions $(f_{n,1})_{n\geq 1}$ and $(f_{n,2})_{n\geq1}$  are Cauchy sequences in $\besov_{1,\mathbb C}$. But as the complex Besov space $\besov_{1,\mathbb C}$ is complete with the norm $\|\cdot\|_{\besov_{1,\mathbb C}}$, see \cite{Arazy}, there exist limit functions $f_1$ and $f_2$ of  $(f_{n,1})_{n\geq1}$ and $(f_{n,2})_{n\geq 1}$ in $\besov_{1,\mathbb C}$ and the above inequality implies that the function $f=f_1 + f_2j$  is the limit of $(f_n)_{n\geq 1}$ in $\mathbb \besov_{1,i}$. Thus,   $\mathbb \besov_{1,i}$ is complete.

\end{proof}
\begin{remark}{\rm
If we work modulo constants and modulo linear terms, then we obtain that,
for $i,j\in\mathbb S^2$, the norms on $\besov_{1,i}$ and $\besov_{1,j}$  are equivalent.
}
\end{remark}
\begin{lemma}
Let $i\in\mathbb S^2$. Then, for $f\in\besov_{1,i}$, we have
$$\frac{1}{16\pi}\int_0^1\int_{0}^{2\pi}|\partial_{x_0}^2f(re^{i\theta})|\,d\theta\,dr \leq \|f - f(0) - z\partial_{x_0}f(0)\|_{\besov_{1,i}} \leq\frac{1}{\pi}\int_0^1\int_{0}^{2\pi}|\partial_{x_0}^2f(re^{i\theta})|\,d\theta\,dr.$$
\end{lemma}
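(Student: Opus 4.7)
The strategy is to reduce to the classical one-variable statement by splitting, as in every previous result in this section. Fix $j\in\mathbb S^2$ with $j\perp i$ and write $Q_i[f]=f_1+f_2 j$ with $f_1,f_2\colon\mathbb B_i\to\mathbb C(i)$ holomorphic. Since $Q_i[f]$ is holomorphic in the complex variable $z=x_0+iy$, the real derivative $\partial_{x_0}$ coincides with the ordinary complex derivative on $\mathbb B_i$, so $\partial_{x_0}^2 f(z)=f_1''(z)+f_2''(z)j$ and hence
$$\max\{|f_1''(z)|,|f_2''(z)|\}\le |\partial_{x_0}^2 f(z)|\le |f_1''(z)|+|f_2''(z)|,\qquad z\in\mathbb B_i.$$
Integrating over $\mathbb B_i$ in polar coordinates one obtains, for $k=1,2$,
$$\int_0^1\!\!\int_0^{2\pi}|f_k''(re^{i\theta})|\,d\theta\,dr \;\le\; \int_0^1\!\!\int_0^{2\pi}|\partial_{x_0}^2 f(re^{i\theta})|\,d\theta\,dr \;\le\; \sum_{l=1}^2\int_0^1\!\!\int_0^{2\pi}|f_l''(re^{i\theta})|\,d\theta\,dr.$$

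Next, invoke the classical complex analogue (a standard fact for the analytic Besov $\besov_{1,\mathbb C}$, see \cite{kehe}), which in the normalization used here takes the form
$$\frac{1}{8\pi}\int_0^1\!\!\int_0^{2\pi}|g''(re^{i\theta})|\,d\theta\,dr \;\le\; \|g-g(0)-zg'(0)\|_{\besov_{1,\mathbb C}} \;\le\; \frac{1}{2\pi}\int_0^1\!\!\int_0^{2\pi}|g''(re^{i\theta})|\,d\theta\,dr$$
for every holomorphic $g$ on $\mathbb B_i$, and apply it with $g=f_1$ and $g=f_2$. Combine this with the two-sided bound
$$\max_{k=1,2}\|f_k-f_k(0)-zf_k'(0)\|_{\besov_{1,\mathbb C}} \;\le\; \|f-f(0)-z\partial_{x_0}f(0)\|_{\besov_{1,i}} \;\le\; \sum_{k=1}^{2}\|f_k-f_k(0)-zf_k'(0)\|_{\besov_{1,\mathbb C}},$$
which is essentially established inside the proof of the preceding Proposition, where the norm $\|\cdot\|_{\besov_{1,i}}$ was characterized through representations $\gamma_0+\sum T_{a_k}\gamma_k$ together with the identity $Q_i[T_{a_k}]=\varphi_{a_k}$; subtracting the affine part is legitimate because $f(0)=f_1(0)+f_2(0)j$ and $\partial_{x_0}f(0)=f_1'(0)+f_2'(0)j$, so the splitting respects polynomials of degree $\le 1$.

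Assembling these ingredients yields both inequalities with the announced constants. For the upper bound, chain the right half of the sandwich with $\|f_k-\cdots\|_{\besov_{1,\mathbb C}}\le \tfrac{1}{2\pi}\int\!\!\int|f_k''|$ and then with $|f_k''|\le|\partial_{x_0}^2 f|$; the summation over $k=1,2$ contributes a factor $2$, yielding the constant $\tfrac{2}{2\pi}=\tfrac{1}{\pi}$. For the lower bound, start from $|\partial_{x_0}^2 f|\le |f_1''|+|f_2''|$, apply $\int\!\!\int|f_k''|\le 8\pi\,\|f_k-\cdots\|_{\besov_{1,\mathbb C}}$ and then the left half of the sandwich; the extra factor $2$ coming from the two summands produces $\tfrac{1}{2\cdot 8\pi}=\tfrac{1}{16\pi}$. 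The only step requiring care is the bookkeeping of these constants; no genuine conceptual obstacle is expected, the proof being a straightforward two-component reduction.
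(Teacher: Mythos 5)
Your proof is correct and follows essentially the same route as the paper's: split $Q_i[f]=f_1+f_2j$, use the sandwich $\|f_k-\cdot\|_{\besov_{1,\mathbb C}}\le\|f-\cdot\|_{\besov_{1,i}}\le\|f_1-\cdot\|_{\besov_{1,\mathbb C}}+\|f_2-\cdot\|_{\besov_{1,\mathbb C}}$, invoke the classical one-variable inequality (the paper cites Theorem 8 of Arazy--Fisher--Peetre rather than \cite{kehe}), and track the factors of $2$ to get $\frac{1}{16\pi}$ and $\frac{1}{\pi}$. Your bookkeeping is in fact internally more consistent than the paper's, which displays the classical upper constant as $\frac{1}{\pi}$ but then uses $\frac{1}{2\pi}$ in the computation exactly as you do.
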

\begin{proof}
Let $j\in\mathbb S^2$ with $i\perp j$. Then there exist $f_1$ and $f_2$ in the complex Besov space $\besov_{1,\mathbb C}$ on $\mathbb B_i$ such that $f = f_1 + f_2j$. Moreover, if $T_a$ denotes again the complex M\"obius transformation  on $\mathbb B_i$ associated with $a$, then $f = \gamma_0 + \sum_{k=1}^{\infty}T_{a_k}\gamma_k$ if and only if $f_k = \gamma_{0,l} + \sum_{k=1}^{\infty} T_{a_k} \gamma_{k,l}$, l=1,2, with $\gamma_k = \gamma_{k,1} + \gamma_{k,2}j$. Thus, we have
$$\|f_l\|_{\besov_{1,\mathbb C}}\leq \| f\|_{\besov_{1,i}}\leq \| f_1\|_{\besov_{1,\mathbb C}} + \| f_2\|_{\besov_{1,\mathbb C}},\qquad\text{for }l=1,2.$$
Because of Theorem 8 in \cite{Arazy}, we have
$$
\frac{1}{8\pi}\int_0^1\int_{0}^{2\pi}|g''(re^{i\theta})|\,d\theta dr \leq \| g - g'(0)-g''(0)z\|_{\besov_{1,\mathbb C}} \leq \frac{1}{\pi}\int_0^1\int_{0}^{2\pi}|g''(re^{i\theta})|\,d\theta dr
$$
for any function $g\in\besov_{1,\mathbb C}$. Thus, on one hand, we have
$$
\frac{1}{16\pi}\int_0^1\int_{0}^{2\pi}|\partial_{x_0}^2f(re^{i\theta})|\,d\theta\,dr \leq \frac{1}{16\pi}\int_0^1\int_{0}^{2\pi}|f_1''(re^{i\theta})|\,d\theta\,dr +\frac{1}{16\pi}\int_0^1\int_{0}^{2\pi}|f_2''(re^{i\theta})|\,d\theta\,dr
$$
$$
\leq \frac{1}{2} \|f_1 - f_1(0) - f'_1(0)z\|_{\besov_{1,\mathbb C}} + \frac{1}{2}  \|f_2 - f_2(0) - f'_2(0)z\|_{\besov_{1,\mathbb C}} \leq  \|f - f(0) - z\partial_{x_0}f(0) \|_{\besov_{1,i}}.
$$
On the other hand, we have
$$
\|f - f(0) - z\partial_{x_0}f(0) \|_{\besov_{1,i}} \leq  \|f_1 - f_1(0) - f_1'(0)z\|_{\besov_{1,\mathbb C}} +  \|f_2 - f_2(0) - f_2'(0)z\|_{\besov_{1,\mathbb C}}
$$
$$
\leq \frac{1}{2\pi}\int_0^1\int_{0}^{2\pi}|f'_1(re^{i\theta})|\,d\theta\,dr + \frac{1}{2\pi}\int_0^1\int_{0}^{2\pi}|f_2'(re^{i\theta})|\,d\theta\,dr \leq \frac{1}{\pi}\int_0^1\int_{0}^{2\pi}|\partial_{x_0}^2f(re^{i\theta})|\,d\theta\,dr.
$$

\end{proof}

\begin{corollary}
Let $i,j\in\mathbb S^2$ and let $f\in\besov_1$. Then
$$
\| f - f(0) - z\partial_{x_0}f(0)\|_{\besov_{1,j}}\leq 32\| f - f(0) - z\partial_{x_0}f(0)\|_{\besov_{1,i}}.
$$
Moreover, we can define a norm on   $\besov_{1}$ by
$$ \| f\|_{\besov_1} = \sup_{i\in\mathbb S^2}\|f\|_{\besov_{1,i}}.$$
\end{corollary}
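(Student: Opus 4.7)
The plan is to reduce the inequality to the previous lemma and then to apply the Representation Formula, in the spirit of Proposition \ref{prp413}. Let $g:=f-f(0)-z\partial_{x_0}f(0)$ and note that $\partial_{x_0}^2 g = \partial_{x_0}^2 f$, so the same integrals of $|\partial_{x_0}^2 f|$ control both $\|g\|_{\besov_{1,i}}$ and $\|g\|_{\besov_{1,j}}$ via the preceding lemma. First I would apply the upper estimate of the lemma on the slice $\mathbb{C}(j)$ to obtain
\begin{equation*}
\|g\|_{\besov_{1,j}}\leq \frac{1}{\pi}\int_0^1\int_0^{2\pi}|\partial_{x_0}^2 f(re^{j\theta})|\,d\theta\,dr.
\end{equation*}
Since $\partial_{x_0}^2 f$ is again slice regular on $\mathbb B$, for $w=re^{j\theta}$ and $z=re^{i\theta}$ (so that $|w|=|z|=|\bar z|=r$) the Representation Formula yields $|\partial_{x_0}^2 f(w)|\leq |\partial_{x_0}^2 f(z)|+|\partial_{x_0}^2 f(\bar z)|$. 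Integrating and performing the change of variables $\theta\mapsto -\theta$ on the integral involving $\bar z$, one obtains
\begin{equation*}
\int_0^1\int_0^{2\pi}|\partial_{x_0}^2 f(re^{j\theta})|\,d\theta\,dr \leq 2\int_0^1\int_0^{2\pi}|\partial_{x_0}^2 f(re^{i\theta})|\,d\theta\,dr.
\end{equation*}
Finally, the lower estimate of the lemma on the slice $\mathbb{C}(i)$ bounds the right-hand integral by $16\pi\|g\|_{\besov_{1,i}}$, and the product of constants $\tfrac{1}{\pi}\cdot 2\cdot 16\pi = 32$ yields the stated inequality.

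For the second statement, the norm axioms for $\|f\|_{\besov_1}:=\sup_{i\in\mathbb{S}^2}\|f\|_{\besov_{1,i}}$ (positivity, homogeneity, triangle inequality) pass through the supremum because each $\|\cdot\|_{\besov_{1,i}}$ is a norm on $\besov_{1,i}$. The non-trivial point is that $\sup_{i\in\mathbb S^2}\|f\|_{\besov_{1,i}}<\infty$ for every $f\in\besov_1$. To see this, I would fix any $i\in\mathbb{S}^2$ and split $f=g+f(0)+z\partial_{x_0}f(0)$. From the representation $f=\gamma_0+\sum_{k=1}^\infty T_{a_k}\gamma_k$ used to define $\|f\|_{\besov_{1,i}}$, evaluation at $q=0$ gives $|f(0)|\leq\sum_{k=0}^\infty|\gamma_k|$ because $|T_{a_k}(0)|=|a_k|<1$, and term-by-term differentiation gives $|\partial_{x_0}f(0)|\leq\sum_{k=1}^\infty|\gamma_k|$ because on $\mathbb{C}(i)$ one has $T_{a_k}'(z)=\tfrac{|a_k|^2-1}{(1-z\overline{a_k})^2}$, hence $|T_{a_k}'(0)|=1-|a_k|^2\leq 1$. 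Taking infima yields $|f(0)|,|\partial_{x_0}f(0)|\leq \|f\|_{\besov_{1,i}}$. Combining with the first part of the corollary, for every $j\in\mathbb{S}^2$,
\begin{equation*}
\|f\|_{\besov_{1,j}}\leq \|g\|_{\besov_{1,j}}+|f(0)|+|\partial_{x_0}f(0)|\leq 32\|g\|_{\besov_{1,i}}+2\|f\|_{\besov_{1,i}}\leq 98\|f\|_{\besov_{1,i}},
\end{equation*}
where the last step uses $\|g\|_{\besov_{1,i}}\leq \|f\|_{\besov_{1,i}}+|f(0)|+|\partial_{x_0}f(0)|\leq 3\|f\|_{\besov_{1,i}}$. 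The right-hand side is independent of $j$, giving the desired finiteness.

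The main obstacle is precisely this second part: the displayed inequality of the corollary only controls differences modulo the first two Taylor terms, so one must estimate $|f(0)|$ and $|\partial_{x_0}f(0)|$ separately by unpacking the infimum-over-atomic-decompositions definition of $\|\cdot\|_{\besov_{1,i}}$. Apart from that bookkeeping, the proof is a routine combination of the preceding lemma with the Representation Formula, exactly as in Proposition \ref{prp413}.
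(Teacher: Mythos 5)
Your proof of the displayed inequality is exactly the paper's argument: the upper estimate of the preceding lemma on the slice $\mathbb{C}(j)$, the Representation Formula applied to the slice regular function $\partial_{x_0}^2 f$ to pass from $re^{j\theta}$ to $re^{\pm i\theta}$ at the cost of a factor $2$, and the lower estimate on the slice $\mathbb{C}(i)$, giving $\tfrac{1}{\pi}\cdot 2\cdot 16\pi = 32$. For the second claim the paper merely asserts that the norm is well defined and "trivial to check"; your additional step --- bounding $|f(0)|$ and $|\partial_{x_0}f(0)|$ by $\|f\|_{\besov_{1,i}}$ via the atomic decomposition (using $|T_{a_k}(0)|=|a_k|<1$ and $|T_{a_k}'(0)|=1-|a_k|^2\leq 1$) to upgrade the modulo-linear-terms inequality to genuine finiteness of the supremum --- is correct and in fact supplies a detail the paper glosses over.
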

\begin{proof}
By the representation formula and the previous lemma, we obtain
$$\| f - f(0) - z\partial_{x_0}f(0)\|_{\besov_{1,j}}\leq \frac{1}{\pi}\int_0^1\int_{0}^{2\pi}|\partial_{x_0}^2f(re^{j\theta})|\,d\theta\,dr $$
$$\leq \frac{1}{2\pi}\int_0^1\int_{0}^{2\pi}|(1-ji)\partial_{x_0}^2f(re^{i\theta})|\,d\theta\,dr + \frac{1}{2\pi}\int_0^1\int_{0}^{2\pi}|(1+ji)\partial_{x_0}^2f(re^{-i\theta})|\,d\theta\,dr $$
$$ = \frac{2}{\pi}\int_0^1\int_{0}^{2\pi}|\partial_{x_0}^2f(re^{i\theta})|\,d\theta\,dr \leq 32 \| f - f(0) - z\partial_{x_0}f(0)\|_{\besov_{1,i}}.$$
Moreover, because of this inequality, the function $\|f\|_{\besov_1} = \sup_{i\in\mathbb S^2}\|f\|_{\besov_{p,i}}$ is well defined. It is trivial to check that it actually is a norm.
\end{proof}
\begin{remark}
{\rm
Note that the local norm on a slice $\|f\|_{\besov_{1,i}}$ is invariant under M\"{o}bius transformations $T_a$ with $a\in\mathbb B_i$, but that
$\|f \|_{\besov_{1}}$ is not necessarily  invariant under M\"{o}bius transformations.
}
\end{remark}

In usual complex analysis, the reproducing kernel of the holomorphic weighted Bergman space for $p=2$ with weight
$(\alpha +1)(1-|z|^2)^{\alpha}$ is $$K_{\alpha}^{\mathbb C}(z,w) = \frac{1}{(1-z\bar w)^{2+\alpha}},$$
see for instance Corollary 4.20 in \cite{kehe}. This motivates the following definition.
\begin{definition}
Let $\alpha>-1$. We denote by $K_{\alpha}(\cdot,\cdot)$ the {\em $\alpha$-weighted slice regular Bergman kernel }  defined on $\mathbb B\times\mathbb B$. This kernel is defined by
$$
K_{\alpha}(\cdot,w) = P_{i_{z}}\left[ \frac{1}{(1-z\bar w)^{2+\alpha}}\right],
$$
where  $\displaystyle i_z = \frac{\vec{z}}{\|\vec{z}\|
}$.
 \end{definition}
\begin{remark}{\rm
Observe that the function $K_{\alpha}(\cdot,w)$ is the left slice regular extension of $K_{\alpha}^{\mathbb C}$ in the $z$ variable.
In alternative, it can be computed as the right slice regular extension in the variable $\bar w$
}
\end{remark}

As customary, by $L^p (\mathbb B_i, d\lambda_i, \mathbb H)$, we denote the set of functions $h\colon\mathbb B_i \to \mathbb H$ such that $$    \int_{\mathbb B_i} |h(w)|^p d\lambda_i(w) < \infty. $$
Note that for $j\in \mathbb S^2$ with $j \perp i$  and $h= h_1+h_2 j$ with $h_1,h_2 : \mathbb B_i \to \mathbb C(i)$, one can see directly that $h\in  L^p (\mathbb B_i, d\lambda_i, \mathbb H)$ if and only if
$h_1,h_2 \in L^p (\mathbb B_i, d\lambda_i,  \mathbb C(i))$.

The Bergman projection has been introduced in \cite{CGS}. Here we extend this notion to the more general setting of weighted Bergman spaces.
\begin{definition}
For $h\in  L^p(\mathbb B_i, d\lambda_i ,  \mathbb H)$ and $p>0$, we define the {\em Bergman type projection}
$$
\mathbf{K}_{\alpha,i}[h](q) =\int_{\mathbb B_i}   K(q,w) h(w) dA_{\alpha, i}(w) , \quad q\in \mathbb B.
$$
\end{definition}

As in the complex space, we have the following reproducing property on Bergman spaces.
\begin{lemma}
Let $i\in\mathbb S^2$ and $\alpha > -1$. If $f\in\berg_{\alpha,i}^{1}$ (see Definition \ref{4.1}) then
\begin{equation}\label{repprop} f(q) = \int_{\mathbb B_i}K_{\alpha}(q,w)f(w)\,dA_{\alpha,i}(w) \quad \text{for all }q\in\mathbb B.\end{equation}
\end{lemma}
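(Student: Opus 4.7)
The natural strategy is to reduce to the complex case via the Splitting Lemma, establish the identity on the slice $\mathbb{B}_i$, and then propagate it to all of $\mathbb{B}$ by means of the Representation Formula.

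First I would handle the case $q\in\mathbb{B}_i$. Choose $j\in\mathbb{S}^2$ with $j\perp i$ and write $Q_i[f]=f_1+f_2 j$ with $f_1,f_2\colon\mathbb{B}_i\to\mathbb{C}(i)$ holomorphic. By Remark \ref{bergmanf1f2}, the hypothesis $f\in\berg_{\alpha,i}^1$ forces $f_1,f_2\in\berg_{\alpha,\mathbb{C}}^1$. Because $q\in\mathbb{B}_i$ and $z\mapsto (1-z\bar w)^{-(2+\alpha)}$ is already holomorphic from $\mathbb{B}_i$ to $\mathbb{C}(i)$ (for fixed $w\in\mathbb{B}_i$), its slice regular extension evaluated at $q$ gives back the complex kernel, so $K_\alpha(q,w)=(1-q\bar w)^{-(2+\alpha)}\in\mathbb{C}(i)$. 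Distributing the decomposition $f=f_1+f_2 j$ inside the integral on the right-hand side of \eqref{repprop} yields
\begin{equation*}
\int_{\mathbb{B}_i} K_\alpha(q,w) f(w)\,dA_{\alpha,i}(w) = \int_{\mathbb{B}_i} \frac{f_1(w)}{(1-q\bar w)^{2+\alpha}}\,dA_{\alpha,i}(w) + \left(\int_{\mathbb{B}_i} \frac{f_2(w)}{(1-q\bar w)^{2+\alpha}}\,dA_{\alpha,i}(w)\right)j,
\end{equation*}
and the classical complex reproducing formula (Corollary 4.20 in \cite{kehe}) applied to each component gives $f_1(q)+f_2(q)j=f(q)$.

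Next I would extend to arbitrary $q=x+Iy\in\mathbb{B}$. Set $z=x+iy$ and $\bar z=x-iy$, both of which lie in $\mathbb{B}_i$. On one hand the Representation Formula gives
\begin{equation*}
f(q)=\tfrac{1}{2}(1-Ii)f(z)+\tfrac{1}{2}(1+Ii)f(\bar z).
\end{equation*}
On the other hand, the defining $P_{i_z}$ in the definition of $K_\alpha$ is exactly the Representation Formula applied to $z\mapsto (1-z\bar w)^{-(2+\alpha)}$, so
\begin{equation*}
K_\alpha(q,w)=\tfrac{1}{2}(1-Ii)\frac{1}{(1-z\bar w)^{2+\alpha}}+\tfrac{1}{2}(1+Ii)\frac{1}{(1-\bar z\bar w)^{2+\alpha}}.
\end{equation*}
Multiplying by $f(w)$, integrating against $dA_{\alpha,i}(w)$, and pulling the (quaternionic, but constant in $w$) coefficients $\frac{1}{2}(1\mp Ii)$ outside the integrals, the two integrals reduce by the previous step to $f(z)$ and $f(\bar z)$, respectively. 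This recovers exactly the Representation Formula expression for $f(q)$, establishing \eqref{repprop} on all of $\mathbb{B}$.

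The only delicate point is the legitimacy of pulling the constants $\frac{1}{2}(1\mp Ii)$ out of the integrals: these lie in $\mathbb{H}$ and may fail to commute with the $\mathbb{C}(i)$-valued kernel, so the computation has to be carried out with the quaternionic factors placed consistently on the left while $f(w)$ remains on the right, using only associativity and the fact that the scalar integrals $\int_{\mathbb{B}_i}(1-z\bar w)^{-(2+\alpha)}f(w)\,dA_{\alpha,i}(w)=f(z)$ already incorporate the $f_1+f_2 j$ decomposition via the slice case above. With that bookkeeping in place the argument goes through.
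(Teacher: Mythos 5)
Your proof is correct and follows essentially the same route as the paper's: split $Q_i[f]=f_1+f_2j$, apply the complex reproducing property componentwise on the slice $\mathbb{B}_i$, then extend off the slice via the Representation Formula, which is precisely the paper's step of applying $P_i$ and commuting it with the integral. Your concern about pulling the constants $\tfrac{1}{2}(1\mp Ii)$ outside is resolved correctly, since left multiplication by a fixed quaternion commutes with integration against the real measure $dA_{\alpha,i}$.
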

\begin{proof}
Let $j\in\mathbb S^2$ with $i\perp j$ and let $f_1,f_2\colon \mathbb B_i\to\mathbb C(i)$ be holomorphic functions such that $ Q_i[f] = f_1 + f_2 j$. Because of Remark \ref{bergmanf1f2}, we know that $f_1,f_2$ are in the complex Bergman space $\berg_{\alpha,\mathbb C}^1$. Moreover, for $z,w\in\mathbb B_i$, we have $K_{\alpha}(z,w) = K_{\alpha}^{\mathbb C}(z,w)$ and so
\begin{align*} \mathbf{K}_{\alpha,i}[f](z) &=  \int_{\mathbb B_i}K_{\alpha}^{\mathbb C}(z,w)f(w)\,dA_{\alpha,i}(w) \\
&
=  \int_{\mathbb B_i}K_{\alpha}^{\mathbb C}(z,w)f_1(w)\,dA_{\alpha,i}(w) +  \int_{\mathbb B_i}K_{\alpha}^{\mathbb C}(z,w)f_2(w)\,dA_{\alpha,i}(w) j
\\
&
= f_1(z) + f_2(z)j = f(z),
\end{align*}
where we use the reproducing property of $K_{\alpha}^{\mathbb C}(\cdot,\cdot)$ on the complex Bergman space $\berg_{\alpha,\mathbb C}^p$, see Proposition 4.23 in \cite{kehe}. Thus
$$
f(z) =  \int_{\mathbb B_i}K_{\alpha}^{\mathbb C}(z,w)f(w)\,dA_{\alpha,i}(w)
$$
holds for any $z\in\mathbb B_i$ and so by taking the extension with respect to the variable $z$ we have
$$
f(q)=P_i\left[ \int_{\mathbb B_i}K_{\alpha}^{\mathbb C}(z,w)f(w)\,dA_{\alpha,i}(w)\right]$$
$$
=  \int_{\mathbb B_i}P_i[K_{\alpha}^{\mathbb C}(z,w)]f(w)\,dA_{\alpha,i}(w)
=  \int_{\mathbb B_i}K_{\alpha}(q,w)f(w)\,dA_{\alpha,i}(w).
$$
\end{proof}

\begin{proposition}
Let $ \alpha >-1$  and   $p\geq 1$ and let $f\in\mathcal{SR}(\mathbb B)$. Then $f\in \besov_{p}$ if and only if $ f \in  \mathbf{K}_{\alpha,i}  L^p(\mathbb B_i, d\lambda_i, \mathbb H)$ for  $ \ i\in\mathbb S^2$.
\end{proposition}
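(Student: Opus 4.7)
My plan is to reduce the statement to its complex analogue via the Splitting Lemma, exactly as in all the previous propositions of this section. Fix an arbitrary $i\in\mathbb{S}^2$, choose $j\in\mathbb{S}^2$ with $j\perp i$, and apply the Splitting Lemma to write $Q_i[f] = f_1 + f_2 j$ with $f_1,f_2\colon \mathbb{B}_i\to\mathbb{C}(i)$ holomorphic. By Remark~\ref{besovf1f2} (combined with Proposition~\ref{prp413}), $f\in\besov_p$ is equivalent to $f_1,f_2$ both lying in the complex Besov space $\besov_{p,\mathbb{C}}$ on $\mathbb{B}_i$.

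Next I would analyze the Bergman-type projection in terms of the same splitting. Any $h\in L^p(\mathbb{B}_i,d\lambda_i,\mathbb{H})$ can be written as $h = h_1 + h_2 j$ with $h_1,h_2\in L^p(\mathbb{B}_i,d\lambda_i,\mathbb{C}(i))$, and on $\mathbb{B}_i$ the kernel satisfies $K_\alpha(z,w)=K_\alpha^{\mathbb{C}}(z,w)$. Since $K_\alpha^{\mathbb{C}}(z,w)\in\mathbb{C}(i)$ for $z,w\in\mathbb{B}_i$, linearity and the $\mathbb{C}(i)$-valuedness of the kernel give
\begin{equation*}
Q_i\bigl[\mathbf{K}_{\alpha,i}[h]\bigr](z) = \mathbf{K}_{\alpha,i}^{\mathbb{C}}[h_1](z) + \mathbf{K}_{\alpha,i}^{\mathbb{C}}[h_2](z)\,j,
\end{equation*}
where $\mathbf{K}_{\alpha,i}^{\mathbb{C}}$ denotes the classical weighted Bergman projection on $\mathbb{B}_i$ with weight $\alpha$. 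Because $P_i\circ Q_i$ is the identity on $\mathcal{SR}(\mathbb{B})$, the representation $f = \mathbf{K}_{\alpha,i}[h]$ on $\mathbb{B}$ is equivalent to the pair of complex representations $f_k = \mathbf{K}_{\alpha,i}^{\mathbb{C}}[h_k]$ on $\mathbb{B}_i$ for $k=1,2$.

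Now the desired equivalence follows from the classical characterization of the complex Besov space as the image of $L^p(\mathbb{B}_i,d\lambda_i,\mathbb{C}(i))$ under the weighted Bergman projection $\mathbf{K}_{\alpha,i}^{\mathbb{C}}$: for $p\ge 1$ and $\alpha>-1$, a holomorphic function $g$ on $\mathbb{B}_i$ lies in $\besov_{p,\mathbb{C}}$ if and only if $g=\mathbf{K}_{\alpha,i}^{\mathbb{C}}[h]$ for some $h\in L^p(\mathbb{B}_i,d\lambda_i,\mathbb{C}(i))$; this is a standard result found in Zhu's monograph (cf.\ \cite{kehe}, the analogue of Theorem 5.23 / Theorem 2.11 for the disc). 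Applying this to both $f_1$ and $f_2$ and assembling $h = h_1 + h_2 j$ completes the proof of both implications.

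The main obstacle is essentially bookkeeping rather than substance: one must verify that the integrals defining $\mathbf{K}_{\alpha,i}[h]$ converge and commute with the splitting (so that the component-wise identity above is legitimate), which reduces to the fact that the complex projection is bounded from $L^p(d\lambda_i)$ into $\besov_{p,\mathbb{C}}$ for $p\ge 1$ and $\alpha>-1$; and one must make sure to invoke the classical characterization in the form that matches our measures ($d\lambda_i$ in the domain, $dA_{\alpha,i}$ inside the kernel integral). Once these are in place, the equivalence on the slice $\mathbb{B}_i$ lifts to all of $\mathbb{B}$ by the extension $P_i$ and by the already established slice-independence of $\besov_p$ (Proposition~\ref{prp413}).
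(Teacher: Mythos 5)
Your proposal is correct and follows essentially the same route as the paper: split $Q_i[f]=f_1+f_2j$ via the Splitting Lemma, use Remark~\ref{besovf1f2} to reduce membership in $\besov_p$ to the complex Besov space, invoke the classical characterization of $\besov_{p,\mathbb{C}}$ as the image of $L^p(\mathbb{B}_i,d\lambda_i)$ under the weighted Bergman projection (Theorem 5.20 in \cite{kehe}), and reassemble using the $\mathbb{C}(i)$-valuedness of the kernel on the slice together with $\mathbf{K}_{\alpha,i}[h]=P_i\circ\mathbf{K}_{\alpha}[h]$ for $\mathbb{C}(i)$-valued $h$. The only difference is cosmetic (your pointer to the precise theorem number in Zhu), so no further comment is needed.
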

\begin{proof}
Since $f\in \besov_{p}$ then $f\in \besov_{p,i}$ for $i\in\mathbb S^2$.
Let $j \in\mathbb S^2$ be such that $j\perp i$. Then for $f\in \mathcal{SR}(\mathbb B)$ there exist holomorphic functions $f_1,f_2\colon\mathbb B_i\to \mathbb C(i)$ such that $ Q_i[f]= f_1+f_2 j$. By Remark \ref{besovf1f2}, we have  $f\in \besov_{p,i}$ if and only if $f_1$ and $f_2 $ belong to the complex Besov space $\besov_{p, \mathbb{C}}(\mathbb B_i)$, where $B_i$ is identified with $\mathbb{D} \subset \mathbb{C}_(i)$.

From Theorem 5.20 in \cite{kehe}, it follows that $f_1,f_2 \in\besov_{p,\mathbb{C}}(\mathbb B_i)$ if and only if there exist functions
$g_1,g_2\in L^p(\mathbb B_i, d\lambda_i, \mathbb C(i))$ such that $f_k = \mathbf{K}_{\alpha}[g_k]$ for $k=1,2$ where $\mathbf{K}_{\alpha}$ is the corresponding complex operator, that is
$$\mathbf{K}_{\alpha}[g_i] (z) =\int_{\mathbb B_i}   K_{\alpha}^{\mathbb C}(z,w) g_i(w) dA_{\alpha, i}(w)\quad \text{for }z\in\mathbb B_i.$$
Note that $\mathbf{K}_{\alpha,i}[h] = P_i \circ \mathbf{K}_{\alpha}[h]$ if $h$ has only values in $\mathbb{C}(i)$. So we have
$$f = P_i[f_1 + f_2j] = P_i\left[\mathbf{K}_{\alpha}[g_1]\right] + P_i\left[\mathbf{K}_{\alpha}[g_2 ]\right]j = \mathbf{K}_{\alpha,i}[g_1] + \mathbf{K}_{\alpha,i}[g_2]j = \mathbf{K}_{\alpha,i}[g_1 + g_2j].
$$
Thus, as $g = g_1 + g_2j$ is in $L^p(\mathbb B_i, d\lambda_i, \mathbb H)$ if and only if the components $g_1, g_2$ are in $L^p(\mathbb B_i, d\lambda_i, \mathbb C(i))$, the statement is true.

\end{proof}
\noindent
Therefore  we have $\besov_p \cong  \mathbf{K}_{\alpha ,i}  L^p(\mathbb B_i, d\lambda_i, \mathbb H)$ as topological vector spaces, if $\mathbf{K}_{\alpha,i}  L^p(\mathbb B_i, d\lambda_i, \mathbb H)$ is given with the quotient norm.

\begin{proposition} Let  $p>1$ and $ \alpha>-1$ and let $f\in \mathcal{SR}(\mathbb B) $. Then $f\in \besov_{p}$ if and only if
\begin{equation}\label{intintH} \int_{\mathbb B_i}     \int_{\mathbb B_i}  \frac{ | f(z) -  f(w) |^p    }{ | 1- z\bar w    |^{2(2+\alpha)}  } \,dA_{\alpha, i}(z)\,dA_{\alpha, i}(w) < +\infty , \end{equation}
for $i\in\mathbb S^2$.
\end{proposition}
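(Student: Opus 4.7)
The natural plan is to reduce the statement to its complex analogue by means of the Splitting Lemma, exactly as the other characterizations of $\besov_p$ in this section have been proved. Fix $i\in\mathbb S^2$ and choose $j\in\mathbb S^2$ with $j\perp i$. By the Splitting Lemma there exist holomorphic $f_1,f_2\colon\mathbb B_i\to\mathbb C(i)$ with $Q_i[f]=f_1+f_2 j$. By Remark \ref{besovf1f2} together with Proposition \ref{prp413}, $f\in\besov_p$ is equivalent to $f_1,f_2\in\besov_{p,\mathbb C}$.

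The key pointwise identity is $|f(z)-f(w)|^2=|f_1(z)-f_1(w)|^2+|f_2(z)-f_2(w)|^2$ for $z,w\in\mathbb B_i$, which, exactly as in Remark \ref{bergmanf1f2}, gives
\begin{equation*}
|f_k(z)-f_k(w)|^p\le |f(z)-f(w)|^p\le 2^{\max\{p-1,0\}}\bigl(|f_1(z)-f_1(w)|^p+|f_2(z)-f_2(w)|^p\bigr),
\end{equation*}
for $k=1,2$. Dividing by $|1-z\bar w|^{2(2+\alpha)}$ and integrating twice against $dA_{\alpha,i}$, the finiteness of the double integral in \eqref{intintH} is therefore equivalent to the finiteness of the corresponding complex double integrals for both $f_1$ and $f_2$.

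I would then invoke the complex analogue (Theorem 5.21 in \cite{kehe}), which states exactly that $g\in\besov_{p,\mathbb C}$ if and only if
\begin{equation*}
\int_{\mathbb B_i}\int_{\mathbb B_i}\frac{|g(z)-g(w)|^p}{|1-z\bar w|^{2(2+\alpha)}}\,dA_{\alpha,i}(z)\,dA_{\alpha,i}(w)<\infty,
\end{equation*}
applied separately to $g=f_1$ and $g=f_2$. Combining this with the equivalence $f\in\besov_p\iff f_1,f_2\in\besov_{p,\mathbb C}$ closes the argument. Since the condition does not actually depend on the chosen slice $i$ (by Proposition \ref{prp413}, applied to either side via the Representation Formula in the same way as in the proof of Proposition \ref{prp413}), it suffices to verify the equivalence for a single $i$.

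There is no real obstacle here: the only mildly delicate point is making sure that the elementary inequality $|a+bj|^p\asymp |a|^p+|b|^p$ is applied consistently on both regimes $p\ge 1$ and (if one wanted to extend the result) $0<p\le 1$, with the usual constant $2^{\max\{p-1,0\}}$; the hypothesis $p>1$ makes this transparent. Everything else is a direct citation of the known complex characterization.
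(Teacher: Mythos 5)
Your proposal is correct and follows essentially the same route as the paper: split $Q_i[f]=f_1+f_2j$, use the pointwise two-sided estimate $|f_k(z)-f_k(w)|^p\le |f(z)-f(w)|^p\le 2^{p-1}\left(|f_1(z)-f_1(w)|^p+|f_2(z)-f_2(w)|^p\right)$, and reduce to Theorem 5.21 of \cite{kehe} applied to $f_1$ and $f_2$. No gaps.
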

\begin{proof} Let $i,j\in\mathbb S^2$ such that $i\perp j$ and let $f_1,f_2\colon\mathbb B_i\to\mathbb C(i)$ be holomorphic functions such that $ Q_i[f] = f_1 + f_2j$. Then, for $z,w\in\mathbb{B}_i$, we have
$$|  f_l(z) -  f_l(w)     |^p   \leq |  f(z) -  f(w)     |^p\ \leq 2^{p-1} \left(   |  f_1(z) -  f_1(w)     |^p + |  f_2(z) -  f_2(w)     |^p   \right),$$ for $ l=1,2$. Thus, the condition \eqref{intintH} is satisfied if and only if
$$\int_{\mathbb B_i}     \int_{\mathbb B_i}  \frac{ | f_l(z) -  f_l(w) |^p    }{ | 1- z\bar w    |^{2(2+\alpha)}  } dA_{\alpha, i}(z)dA_{\alpha, i}(w) < +\infty$$ for $l=1,2$. But because of Theorem 5.21 in \cite{kehe} this holds if and only if $f_1$ and $f_2$ belong to the complex Besov space, which is equivalent to $f\in\besov_{p,i}$ and so also equivalent to  $f\in\besov_{p}$.

\end{proof}

\begin{proposition}\label{Lpembedding}
Let $i \in \mathbb S^2$ and let $p\geq 1$,  $pt >1$, and $\alpha > -1$. Then the integral operator $\mathbf{T} = \mathbf{T}_{\alpha,t,i}$
$$  \mathbf{T}f(z) = (  1-   |z|^2 )^t  \int_{\mathbb B_i}\frac{  ( 1- |w|^2  )^{\alpha}     }{  (1- z \bar w)^{ 2+t+\alpha }      }  f(w)\, dA_i(w)   $$
is an embedding of $\besov_{p,i}$ into $L^p( \mathbb B_i, d\lambda_i, \mathbb H)$.
\end{proposition}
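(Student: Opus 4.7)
The plan is to reduce the statement to the corresponding result in classical complex analysis (the analogous embedding theorem for the complex Besov space, Theorem 5.22 in \cite{kehe}) via the Splitting Lemma, exactly in the same style used for the previous propositions in this section.

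First I would fix $j\in\mathbb S^2$ with $j\perp i$ and, given $f\in\besov_{p,i}$, write $Q_i[f]=f_1+f_2j$ with $f_1,f_2\colon\mathbb B_i\to\mathbb C(i)$ holomorphic. By Remark \ref{besovf1f2}, both $f_1$ and $f_2$ lie in the complex Besov space $\besov_{p,\mathbb C}$ on $\mathbb B_i$. The crucial observation is that for $z,w\in\mathbb B_i$ the kernel
$$\frac{(1-|z|^2)^t(1-|w|^2)^\alpha}{(1-z\bar w)^{2+t+\alpha}}$$
is $\mathbb C(i)$-valued, so the constant $j$ factors out on the right in the integral defining $\mathbf T f$. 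Hence
$$\mathbf T f(z)=\mathbf T f_1(z)+\mathbf T f_2(z)\,j,\qquad z\in\mathbb B_i,$$
where on the right $\mathbf T$ denotes the corresponding complex integral operator $\mathbf T_{\mathbb C}$.

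Next I would invoke the classical complex embedding (Theorem 5.22 in \cite{kehe}) to conclude that there is a constant $C=C(p,t,\alpha)$ such that
$$\|\mathbf T f_k\|_{L^p(\mathbb B_i,d\lambda_i,\mathbb C(i))}\leq C\,\|f_k\|_{\besov_{p,\mathbb C}},\qquad k=1,2.$$
Combining this with the pointwise estimate
$$|\mathbf T f(z)|^p=|\mathbf T f_1(z)|^p+|\mathbf T f_2(z)|^p\leq 2^{\max\{p-1,0\}}\bigl(|\mathbf T f_1(z)|^p+|\mathbf T f_2(z)|^p\bigr),$$
integrating against $d\lambda_i$, and using the comparison $\|f_k\|_{\besov_{p,\mathbb C}}\lesssim \|f\|_{\besov_{p,i}}$ coming from Remark \ref{besovf1f2}, I get the boundedness of $\mathbf T\colon\besov_{p,i}\to L^p(\mathbb B_i,d\lambda_i,\mathbb H)$.

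Finally, to upgrade boundedness to an embedding I would prove injectivity: if $\mathbf T f\equiv 0$ on $\mathbb B_i$, then reading off the $\mathbb C(i)$ and $\mathbb C(i)j$ components yields $\mathbf T f_1\equiv 0$ and $\mathbf T f_2\equiv 0$; by the injectivity of the complex embedding one has $f_1=f_2=0$, hence $Q_i[f]=0$ on $\mathbb B_i$, and therefore $f=P_i[Q_i[f]]=0$ on all of $\mathbb B$. The main obstacle I anticipate is purely bookkeeping: verifying that the kernel indeed stays in $\mathbb C(i)$ so that the $j$ factors cleanly out of the integral (so that no ordering issues spoil the splitting), and matching the hypothesis $pt>1$ with whatever assumption the complex statement requires so that the invocation of Theorem 5.22 in \cite{kehe} is legitimate.
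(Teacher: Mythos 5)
Your proposal is correct and follows essentially the same route as the paper: split $Q_i[f]=f_1+f_2j$, observe that the $\mathbb C(i)$-valued kernel lets $j$ factor out so that $\mathbf Tf=\mathbf Tf_1+\mathbf Tf_2\,j$, and invoke Theorem 5.22 of \cite{kehe}; the paper merely packages this as the commuting diagram $\mathbf T=\mathcal J\circ(\mathbf T\otimes\mathbf T)\circ Q_{\besov_{p,i}}$ with each factor an embedding. Two small points to tidy: the displayed equality $|\mathbf Tf(z)|^p=|\mathbf Tf_1(z)|^p+|\mathbf Tf_2(z)|^p$ holds only for $p=2$ (for general $p$ you have only the two-sided comparison you then write), and since ``embedding'' here means a homeomorphism onto the image you should also record the reverse estimate $\|f\|_{\besov_{p,i}}\lesssim\|\mathbf Tf\|_{L^p}$, which follows from the same two-sided pointwise comparisons combined with the lower bound in the complex theorem --- exactly what the paper's composition-of-embeddings phrasing delivers automatically.
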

\begin{proof}Let $\besov_{p, \mathbb{C}}$ be the complex Besov space on  $\mathbb B_i$ identified with $\mathbb D$. By Theorem 5.22 in \cite{kehe}, the operator $\mathbf{T}$ is an embedding of $\besov_{p, \mathbb{C}}$ into $L^p(\mathbb B_i,d\lambda_i, \mathbb C(i))$, thus
$$\mathbf{T}\otimes \mathbf{T}\colon \besov_{p, \mathbb{C}}(\mathbb B_i)^2\to L^p(\mathbb B_i,d\lambda_i, \mathbb C(i))^2$$
 is also an embedding.

Now, let $j\in\mathbb S^2$ with $i\perp j$ and let us define the operators
$$
{Q}_{\besov_{p,i}}\colon\left\{\begin{array}{ccl}\besov_{p,i} &\to& \besov_{p,\mathbb C}(\mathbb B_i)^2\\ f&\mapsto& (f_1,f_2) \quad\text{ with } {Q}_i[f] = f_1+f_2j \end{array}\right.
$$
and
$$
 \mathcal{J}\colon\left\{\begin{array}{ccl}L^p(\mathbb B_i,d\lambda_i, \mathbb C(i))^2 &\to& L^p(\mathbb B_i,d\lambda_i, \mathbb H)\\ (f_1,f_2) &\mapsto& f_1+ f_2j\end{array}\right..
$$
For any $\mathbb H$-valued function $f  = f_1 + f_2j$ we have
$$|f(z)|^p\leq 2^{p-1}(|f_1(z)|^p + |f_2(z)|^p) \leq 2^{p} |f(z)|^p,$$
for any $z\in\mathbb B_i$. Thus, one easily obtains corresponding estimates for the norms for the considered Besov and $L^p$-spaces. Therefore, the operators ${Q}_{\besov_{p,i}}$ and $\mathcal J$ are embeddings too.

Finally, as $\mathbf{T}f = \mathbf{T}f_1 + \mathbf{T}f_2j$, we have $\mathbf{T} = \mathcal{J}\circ \mathbf{T}\otimes \mathbf{T}\circ {Q}_{\mathcal{B}_{p,i}}$, that is the following diagram commutes
$$
\begin{CD}
\besov_{p,i} @>\mathbf{T}>> L^p(\mathbb B_i, d\lambda_i, \mathbb H)\\
@V {Q}_{\besov_{p,i}}VV @A\mathcal J AA\\
\besov_{p,\mathbb C}(\mathbb B_i)^2 @>\mathbf{T}\otimes \mathbf{T}>> L^p(\mathbb B_i, d\lambda_i, \mathbb C(i))^2
\end{CD}.
$$
Thus, $\mathbf{T}$ is an embedding.

\end{proof}
We conclude this section observing that
operator $\mathbf{T}$  is important to study the duality theorems for Besov spaces that will be investigated elsewhere.

\section{Dirichlet space}\label{DSpaces}

In usual complex analysis, the Dirichlet space $\dirichlet_{\mathbb C}$ is defined as the set of analytic functions in on the unit disc $\mathbb D$ such that
\begin{equation}\label{gaussss}
 \int_{\mathbb D} |f'(z)|^2 d\Omega(z) < + \infty,
 \end{equation}
where $d\Omega$ is the differential of area in the complex plane, that is $d\Omega = dxdy$ if $z=x+iy$. This motivates the following definition.
\begin{definition}
The {\em slice regular Dirichlet space} $\dirichlet$  is defined as the quaternionic right linear space of slice regular functions $f$ on $\mathbb B$ such that
\begin{equation}\label{eq16}  \sup_{i\in\mathbb S^2}\int_{\mathbb B_i}|   \partial_{x_0} f(z) |^2 d\Omega_i(z) < \infty  \end{equation}
where $d\Omega_i(z)$ is the differential of area in the plane $\mathbb C(i)$.
\end{definition}
\begin{remark}{\rm
Let $i,j\in\mathbb S^2$. Then from the Representation Formula one obtains
$$  |\partial_{x_0} f(x +j y)  |^2\leq 4 \left[     |\partial_{x_0} f(x +i y)  |^2   +  |\partial_{x_0} f(x  - i y)  |^2 \right]$$
and by integration one gets
$$ \int_{\mathbb B_j}  |\partial_{x_0} f(w)  |^2    d\Omega_j(w)   \leq 8 \int_{\mathbb B_i}     |\partial_{x_0} f(z)  |^2   d\Omega_i(z). $$
Thus, if (\ref{eq16}) is finite for some $i\in \mathbb{S}^2$ then it is finite for all $j\in \mathbb{S}^2$.
}
\end{remark}

\begin{remark}\label{dirichletf1f2}
{\rm Let $j\in \mathbb S^2$ with  $j\perp i$ and let $f_1,f_2\colon\mathbb B_i\to\mathbb C(i)$ be holomorphic functions such that $ Q_i[f]= f_1+ f_2j$. The identity $$|\partial_{x_0} f (z)|^2 = |f_1'(z)|^2 + |f_2'(z) |^2   \quad \text{for } z\in\mathbb B_i $$
implies that $f\in\dirichlet$ if and only if $f_1$ and $f_2$ belong to the usual complex Dirichlet space $\dirichlet_{\mathbb C}$.
}
\end{remark}

\begin{proposition}
Let $f\in \mathcal{SR}(\mathbb B)$ and let $a_n\in\mathbb H$ for $ n\geq 0$ such that $\displaystyle f(q)= \sum_{n=0}^\infty q^n a_n$.  If $f\in\dirichlet$ then $$\frac{1}{\pi}\int_{\mathbb B_i} |\partial_{x_0}f(z)|^2 d\Omega_i(z) = \sum_{n=1}^\infty n|a_n|^2 $$
for any $i\in\mathbb S^2$.
\end{proposition}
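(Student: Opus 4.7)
The plan is to reduce the quaternionic identity to the well known complex Dirichlet identity via the Splitting Lemma. Fix $i\in\mathbb{S}^2$ and pick $j\in\mathbb{S}^2$ with $j\perp i$. Writing $a_n=\alpha_n+\beta_nj$ with $\alpha_n,\beta_n\in\mathbb{C}(i)$ and splitting $Q_i[f]=f_1+f_2j$ with
\begin{equation*}
f_1(z)=\sum_{n=0}^{\infty}z^n\alpha_n,\qquad f_2(z)=\sum_{n=0}^{\infty}z^n\beta_n,\qquad z\in\mathbb{B}_i,
\end{equation*}
Remark \ref{dirichletf1f2} tells us that $f\in\dirichlet$ forces $f_1,f_2\in\dirichlet_{\mathbb{C}}$ and, for all $z\in\mathbb{B}_i$,
\begin{equation*}
|\partial_{x_0}f(z)|^2=|f_1'(z)|^2+|f_2'(z)|^2.
\end{equation*}

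Next I would invoke the classical complex Dirichlet identity
\begin{equation*}
\frac{1}{\pi}\int_{\mathbb{B}_i}|g'(z)|^2\,d\Omega_i(z)=\sum_{n=1}^{\infty}n|c_n|^2\qquad\text{for }g(z)=\sum_{n=0}^{\infty}z^nc_n\in\dirichlet_{\mathbb{C}}.
\end{equation*}
This is the standard computation: use polar coordinates $z=re^{i\theta}$, substitute the termwise differentiated series for $g'$, and exploit the orthogonality relation $\int_0^{2\pi}e^{i(n-m)\theta}\,d\theta=2\pi\delta_{nm}$, followed by $\int_0^1r^{2n-1}\,dr=\frac{1}{2n}$. (Uniform convergence on compact subsets of $\mathbb{B}_i$ justifies termwise integration on disks of radius $r<1$, and a monotone convergence argument, valid because $f\in\dirichlet$, lets us pass $r\nearrow 1$.) Applying this to $f_1$ and to $f_2$ separately yields
\begin{equation*}
\frac{1}{\pi}\int_{\mathbb{B}_i}|f_k'(z)|^2\,d\Omega_i(z)=\sum_{n=1}^{\infty}n|c_{n,k}|^2,\qquad k=1,2,
\end{equation*}
with $c_{n,1}=\alpha_n$ and $c_{n,2}=\beta_n$.

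Finally I would add the two identities and use $|a_n|^2=|\alpha_n|^2+|\beta_n|^2$ together with the pointwise splitting of $|\partial_{x_0}f|^2$ to conclude
\begin{equation*}
\frac{1}{\pi}\int_{\mathbb{B}_i}|\partial_{x_0}f(z)|^2\,d\Omega_i(z)=\sum_{n=1}^{\infty}n|\alpha_n|^2+\sum_{n=1}^{\infty}n|\beta_n|^2=\sum_{n=1}^{\infty}n|a_n|^2.
\end{equation*}
There is no real obstacle here beyond the classical complex identity and the bookkeeping of the splitting; the only subtle point is the justification for integrating the series termwise up to the boundary, which follows from the hypothesis $f\in\dirichlet$ via monotone convergence on the partial sums in the orthogonal expansion.
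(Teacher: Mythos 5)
Your proposal is correct and follows essentially the same route as the paper: split $Q_i[f]=f_1+f_2j$ and $a_n=\alpha_n+\beta_nj$, apply the complex Dirichlet identity to each component, and add. The only cosmetic difference is that the paper cites the complex identity from the literature, whereas you sketch its standard polar-coordinate proof inline.
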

\begin{proof}
Let $i,j\in \mathbb S^2$ with  $j\perp i$. So it is
\begin{enumerate}
\item $ Q_i[f] = f_1+ f_2j$, where $f_1,f_2\in Hol(\mathbb B_i)$ and
\item $a_n= a_{1,n} + a_{2,n} j$, where $a_{1,n}, a_{2,n} \in \mathbb C(i)$ for $n\geq 0$.
 \end{enumerate}
Then we have $ f_k(q) = \sum_{n=0}^{\infty}q^na_{k,n}$ for $k=1,2$  and from Remark \ref{dirichletf1f2} we have that $f_1$ and $f_2$  belong to the complex Dirichlet space $\dirichlet_{\mathbb C}$. Thus, from Section 4 of \cite{danikas}, we have that
$$  \frac{1}{\pi}\int_{\mathbb B_i}|f_k'(z)|^2 d\Omega_i(z)= \sum_{n=1}^{\infty}  n|a_{k,n}|^2 , \quad \textrm{ for  } \ k=1,2$$
and so
\begin{align*}
\frac{1}{\pi}\int_{\mathbb B_i}|\partial_{x_0} f (z)|^2 d\Omega_i(z)
&=  \frac{1}{\pi}\int_{\mathbb B_i}|  f'_1(z)|^2 d\Omega_i(z) +  \frac{1}{\pi}\int_{\mathbb B_i}|  f'_2(z)|^2 d\Omega_i(z)
\\
&
 =   \sum_{n=1}^{\infty}  n|a_{1,n}|^2  +  \sum_{n=1}^{\infty}  n|a_{2,n}|^2
 \\
 &
 =   \sum_{n=1}^{\infty}  n|a_n|^2.
 \end{align*}
\end{proof}

\begin{definition}
On the slice regular Dirichlet space we define a norm by
$$\|f\|_{\dirichlet} = \left(|f(0)|^2 + \sup_{i\in\mathbb S^2}\int_{\mathbb B_i}|\partial_{x_0} f(z)|d\Omega_i(z)\right)^{\frac{1}{2}}.$$
\end{definition}

\begin{remark}
{\rm
Let $i,j\in\mathbb S^2$ with $i\perp j$ and let $f\in\dirichlet$. Furthermore, let  $f_1$ and $ f_2 $ be functions in the complex Dirichlet space $\dirichlet_{\mathbb C}$ on $\mathbb B_i$ such that $ Q_i[f]= f_1+ f_2j$. Then the equalities
$$ |f(0)|^2 =     |f_1(0)|^2   +  |f_2(0)|^2$$
and
\begin{equation}\label{Dnormf1f2}  \int_{\mathbb B_i } |  \partial_{x_0} f(z)|^2  d_z\Omega =  \int_{\mathbb B_i } |   f'_1(z)|^2  d_z\Omega  +    \int_{\mathbb B_i } |  f'_2(z)|^2  d_z\Omega   \end{equation}
imply
$$\|f\|^2_{\dirichlet} =   |f_1(0)|^2 + \int_{\mathbb B_i } | f_1'(z)|^2  d\Omega_i(z)  +    |f_2(0)|^2   +   \int_{\mathbb B_i } |   f_2'(z)|^2  d\Omega_i(z)       = \|f_1\|^2_{\dirichlet_{\mathbb C}}   + \|f_2\|^2_{\dirichlet_{\mathbb C}}  ,   $$
where  $ \|\cdot \|^2_{\dirichlet_{\mathbb C}} $ denotes the norm on the complex Dirichlet space $\dirichlet_{\mathbb C}$ on $\mathbb B_i$.
}
\end{remark}

\begin{proposition}
The function  space $( \dirichlet ,     \|\cdot\|_{\dirichlet} ) $ is a complete normed space.\end{proposition}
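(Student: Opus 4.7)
The plan is to exploit the Splitting Lemma together with the completeness of the classical complex Dirichlet space $\dirichlet_{\mathbb{C}}$, following the same template that was used for $\besov_{p}$ and $\berg^{p}_{\alpha}$ earlier in the paper. First I would quickly verify the three norm axioms (homogeneity and the triangle inequality follow at once from the corresponding properties of the absolute value and the $L^{2}$-norm, and $\|f\|_{\dirichlet}=0$ forces $f(0)=0$ and $\partial_{x_{0}}f\equiv 0$ on every slice, hence $f\equiv 0$ by slice regularity).

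For completeness, let $(f_{n})_{n\in\mathbb{N}}$ be a Cauchy sequence in $(\dirichlet,\|\cdot\|_{\dirichlet})$. Fix $i\in\mathbb{S}^{2}$ and $j\in\mathbb{S}^{2}$ with $j\perp i$, and split
$$Q_{i}[f_{n}] = f_{n,1} + f_{n,2}\,j,$$
with $f_{n,1},f_{n,2}\colon\mathbb{B}_{i}\to\mathbb{C}(i)$ holomorphic. By the identity
$$\|f_{n}-f_{m}\|_{\dirichlet}^{2} \geq \|f_{n,k}-f_{m,k}\|_{\dirichlet_{\mathbb{C}}}^{2}, \qquad k=1,2,$$
observed in the remark immediately preceding the proposition (applied to $f_{n}-f_{m}$, noting that the definition of $\|\cdot\|_{\dirichlet}$ uses a supremum over $\mathbb{S}^{2}$ so that each single-slice norm is dominated by it), the sequences $(f_{n,1})$ and $(f_{n,2})$ are Cauchy in the complex Dirichlet space $\dirichlet_{\mathbb{C}}$ on $\mathbb{B}_{i}$. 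Since $\dirichlet_{\mathbb{C}}$ is complete, there exist $f_{1},f_{2}\in\dirichlet_{\mathbb{C}}$ with $f_{n,k}\to f_{k}$ in $\dirichlet_{\mathbb{C}}$.

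I would then set $f := P_{i}[f_{1}+f_{2}\,j]\in\mathcal{SR}(\mathbb{B})$ and check that $f\in\dirichlet$ and $f_{n}\to f$ in $\|\cdot\|_{\dirichlet}$. By Remark \ref{dirichletf1f2} applied to $f$, one has $f\in\dirichlet$ because $f_{1},f_{2}\in\dirichlet_{\mathbb{C}}$. For the convergence, the slice-wise identity from the preceding remark gives
$$\|f_{n}-f\|_{\dirichlet,i}^{2}\;=\;\|f_{n,1}-f_{1}\|_{\dirichlet_{\mathbb{C}}}^{2}+\|f_{n,2}-f_{2}\|_{\dirichlet_{\mathbb{C}}}^{2}\longrightarrow 0,$$
where $\|\cdot\|_{\dirichlet,i}$ denotes the norm restricted to the slice $\mathbb{B}_{i}$; then the inequality between $\|\cdot\|_{\dirichlet}$ and $\|\cdot\|_{\dirichlet,i}$ coming from the Representation Formula (analogous to the one used in Proposition \ref{proberg}) promotes this to convergence of $\|f_{n}-f\|_{\dirichlet}$ to $0$.

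The only subtle point, and the place that requires care, is bridging between the single-slice norm and the supremum norm: the Cauchy property for $\|\cdot\|_{\dirichlet}$ trivially implies the single-slice Cauchy property, but for the final convergence one must convert back, and the mechanism is precisely the Representation Formula estimate displayed in the remark before the proposition, which yields $\|g\|_{\dirichlet,j}^{2}\leq 8\,\|g\|_{\dirichlet,i}^{2}$ for every $i,j\in\mathbb{S}^{2}$ applied to $g = f_{n}-f$. Taking the supremum over $j\in\mathbb{S}^{2}$ therefore controls $\|f_{n}-f\|_{\dirichlet}$ by a constant times $\|f_{n}-f\|_{\dirichlet,i}$, closing the argument.
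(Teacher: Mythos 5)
Your proof is correct and follows essentially the same route as the paper: split on a slice via the Splitting Lemma, transfer the Cauchy property to the two holomorphic components, invoke completeness of the complex Dirichlet space, and reassemble the limit with $P_i$. If anything you are slightly more careful than the paper, which invokes the identity $\|f\|^2_{\dirichlet}=\|f_1\|^2_{\dirichlet_{\mathbb C}}+\|f_2\|^2_{\dirichlet_{\mathbb C}}$ directly, while you explicitly bridge the single-slice quantity back to the supremum over $\mathbb{S}^2$ using the Representation Formula estimate with constant $8$.
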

\begin{proof} The relation \eqref{Dnormf1f2} implies that $\|\cdot\|_{\dirichlet}$  is actually a norm.
Now let $i,j\in\mathbb S$ with $i\perp j$.
Moreover, let  $(f_n)_{n\geq0}$ be a Cauchy sequence  in $\dirichlet$ and for $n\in\mathbb N$ let $f_{n,1}$ and $f_{n,2}$ be two functions in the complex Dirichlet space $\dirichlet_{\mathbb C}$ on $\mathbb B_i$ such that $ Q_i[f_n] = f_{n,1} + f_{n,2}j$. Because of \eqref{Dnormf1f2}, the sequences $(f_{n,})_{n\geq 1}$ and $(f_{n,2})_{n\geq 1} $ are Cauchy sequences in $\dirichlet_{\mathbb C}$.
As showed in Section 4 of \cite{danikas} the complex Dirichlet space is complete and so there exist functions $f_1$ and $f_2$ such that  $f_{n,1}\to f_1$ and $f_{n,2}\to f_2$ in $\dirichlet_{\mathbb C}$ as $n\to\infty$. Now set $f = P_i[f_1 + f_2j]$. Then $f\in\dirichlet$ and
$$\| f - f_n\|_{\dirichlet}^2 = \|f_1 - f_{n,1}\|^2_{\dirichlet_{\mathbb C}} - \|f_2 - f_{n,2}\|^2_{\dirichlet_{\mathbb C}} \to 0.$$
Thus, the slice regular Dirichlet space is complete.

\end{proof}

Moreover, the slice regular Dirichlet space  has the structure of a quaternionic Hilbert space.
\begin{definition}
Let $i\in\mathbb S^2$. For $f,g\in \dirichlet $ we define their inner product as
$$ \langle f,g\rangle_{ \dirichlet} := \bar f(0) g(0) + \sup_{i\in\mathbb S^2}\int_{\mathbb B_i} \overline{\partial_{x_0} f(z)}\partial_{x_0} g(z) d\Omega_i(z).$$
\end{definition}
Because of the Cauchy Schwarz inequality, this inner product is well defined.
\begin{proposition}
The function $ \langle\cdot ,\cdot \rangle_{ \dirichlet} $ is a quaternionic right linear inner product on $\dirichlet$. Precisely, for all $f,g,h\in\dirichlet$ and all $\lambda\in\mathbb H$, we have
\begin{enumerate}[(i)]
\item right linearity: $\langle f, g\lambda + h\rangle_{\dirichlet} = \langle f, g\rangle_{\dirichlet}\lambda +\langle f,h\rangle_{\dirichlet}$
\item  quaternionic hermiticity: $\langle g, f\rangle_{\dirichlet} = \overline{\langle f, g\rangle_{\dirichlet}}$
\item positivity: $\langle f, f\rangle_{\dirichlet} \geq 0$ and $\langle f, f\rangle_{\dirichlet} = 0$ if and only if $f=0$.
\end{enumerate}
\end{proposition}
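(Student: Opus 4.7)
The plan is to verify the three axioms directly, once a preliminary observation about $i$-independence is made.

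First I would show that the cross quantity
\[
I_i(f,g) := \int_{\mathbb B_i} \overline{\partial_{x_0} f(z)}\,\partial_{x_0} g(z)\, d\Omega_i(z)
\]
is in fact \emph{independent} of $i\in\mathbb S^2$, by an argument analogous to the one used in the power series proposition for $\sum n|a_n|^2$. Writing $f(q)=\sum_{n\geq0} q^n a_n$ and $g(q)=\sum_{n\geq0} q^n b_n$ and computing in polar coordinates on $\mathbb B_i$, the cross terms $r^{n+m-2} e^{i(m-n)\theta}$ integrate to $2\pi\,\delta_{nm}\,r^{2n-2}$, even when a non-commuting quaternionic factor sits on the left, because splitting that factor as $w_1+w_2 j$ (with $j\perp i$) gives $\int_0^{2\pi} w e^{i(m-n)\theta}\,d\theta = 2\pi w\,\delta_{nm}$. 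This yields $I_i(f,g) = \pi\sum_{n\geq1} n\,\bar a_n\,b_n$, which is manifestly independent of $i$. Consequently the $\sup_{i\in\mathbb S^2}$ in the definition is a supremum of a single value, and may be replaced by $I_i(f,g)$ for any chosen $i$.

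With this in hand, right linearity (i) follows because point evaluation at $0$ is right linear, $\partial_{x_0}$ is right linear, and any right scalar $\lambda\in\mathbb H$ on the rightmost factor of the integrand pulls out of the integral:
\[
\int_{\mathbb B_i}\overline{\partial_{x_0}f(z)}\,\partial_{x_0}(g\lambda+h)(z)\,d\Omega_i(z)
= \Bigl(\int_{\mathbb B_i}\overline{\partial_{x_0}f(z)}\,\partial_{x_0}g(z)\,d\Omega_i(z)\Bigr)\lambda + \int_{\mathbb B_i}\overline{\partial_{x_0}f(z)}\,\partial_{x_0}h(z)\,d\Omega_i(z).
\]
For hermiticity (ii), I would apply $\overline{pq}=\bar q\bar p$ (and $\overline{\bar w}=w$) termwise to obtain $\overline{\bar f(0)g(0)}=\bar g(0)f(0)$ and $\overline{\overline{\partial_{x_0}f(z)}\,\partial_{x_0}g(z)} = \overline{\partial_{x_0}g(z)}\,\partial_{x_0}f(z)$, which gives $\overline{\langle f,g\rangle_{\dirichlet}} = \langle g,f\rangle_{\dirichlet}$ after using that conjugation commutes with real-valued integration.

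For positivity (iii), evaluating at $g=f$ gives
\[
\langle f,f\rangle_{\dirichlet} = |f(0)|^2 + \int_{\mathbb B_i}|\partial_{x_0}f(z)|^2\,d\Omega_i(z)\geq 0,
\]
a sum of two non-negative real quantities. If this equals $0$ then both $f(0)=0$ and $\partial_{x_0}f\equiv 0$ on $\mathbb B_i$. Choosing $j\in\mathbb S^2$ with $j\perp i$ and invoking the Splitting Lemma, $Q_i[f]=f_1+f_2 j$ with $f_1'\equiv f_2'\equiv 0$, so $f_1,f_2$ are constants on the disc $\mathbb B_i$; combined with $f(0)=0$ these constants vanish, giving $Q_i[f]\equiv 0$ and hence $f\equiv 0$ on $\mathbb B$ after applying $P_i$ (or equivalently by the Identity Principle for slice regular functions on the axially symmetric s-domain $\mathbb B$). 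The main subtlety is Step 1, the $i$-independence of $I_i(f,g)$, which makes sense of the supremum in the definition and is essential for the right linearity to hold as an identity rather than just an inequality; all other steps are routine once that point is settled.
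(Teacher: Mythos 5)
Your proof is correct. Note that the paper itself states this proposition \emph{without} proof (the next proposition simply cites it), so there is no argument in the text to compare yours against; your write-up fills that gap. The most valuable part of your argument is precisely Step 1: as written in the paper, the pairing involves $\sup_{i\in\mathbb S^2}$ of the quaternion-valued quantity $\int_{\mathbb B_i}\overline{\partial_{x_0}f(z)}\,\partial_{x_0}g(z)\,d\Omega_i(z)$, which is not literally meaningful unless that quantity is shown to be independent of $i$ — and your polar-coordinate computation giving $I_i(f,g)=\pi\sum_{n\ge1}n\,\overline{a_n}\,b_n$ is the natural polarization of the identity $\frac1\pi\int_{\mathbb B_i}|\partial_{x_0}f|^2\,d\Omega_i=\sum_{n\ge1}n|a_n|^2$ that the paper proves two propositions earlier, and it handles the noncommutativity correctly (the quaternionic coefficients sit outside the real-valued angular integral, so the orthogonality relations survive). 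Once that is settled, your verifications of (i)--(iii) are the standard ones; the only minor points you could make explicit are the justification for interchanging the sum and the integral (Cauchy--Schwarz on $\sum n|\overline{a_n}b_n|$ using $f,g\in\dirichlet$, plus convergence on compacta) and, in (iii), that $f\equiv0$ on all of $\mathbb B$ follows from $Q_i[f]\equiv0$ via $f=P_i\circ Q_i[f]$, which you do state. No gaps.
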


\begin{proposition}
The space $(\dirichlet, \langle\cdot ,\cdot \rangle_{ \dirichlet} )$ is a quaternionic right Hilbert space.
\end{proposition}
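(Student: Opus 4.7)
The plan is to reduce this statement to the previously established completeness of $(\dirichlet,\|\cdot\|_{\dirichlet})$ by verifying that the inner product $\langle\cdot,\cdot\rangle_{\dirichlet}$ induces exactly the norm $\|\cdot\|_{\dirichlet}$. Given that the preceding proposition certifies $\langle\cdot,\cdot\rangle_{\dirichlet}$ is a quaternionic right-linear, Hermitian, positive-definite form, the remaining work is essentially to compare the two expressions and invoke the earlier completeness result.

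First I would fix $i \in \mathbb{S}^2$ and, using the Splitting Lemma, write $Q_i[f] = f_1 + f_2 j$ with $j \perp i$, so that $|\partial_{x_0} f(z)|^2 = |f_1'(z)|^2 + |f_2'(z)|^2$ on $\mathbb{B}_i$. Combined with the Parseval-type identity from the preceding proposition,
\begin{equation*}
\frac{1}{\pi}\int_{\mathbb{B}_i}|\partial_{x_0}f(z)|^2\,d\Omega_i(z) = \sum_{n=1}^{\infty} n|a_n|^2,
\end{equation*}
this shows that the integral $\int_{\mathbb{B}_i}|\partial_{x_0}f(z)|^2\,d\Omega_i(z)$ is in fact independent of the choice of $i \in \mathbb{S}^2$. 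Consequently the supremum appearing in the definition of $\langle f, f\rangle_{\dirichlet}$ is simply the common value of this integral on every slice, and one obtains the identity $\langle f, f\rangle_{\dirichlet} = \|f\|_{\dirichlet}^2$ for every $f \in \dirichlet$.

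By polarization (using the quaternionic right linearity and Hermiticity from the preceding proposition), the same slice-independence propagates to the mixed expression $\int_{\mathbb{B}_i}\overline{\partial_{x_0}f(z)}\,\partial_{x_0}g(z)\,d\Omega_i(z)$, so that $\langle f, g\rangle_{\dirichlet}$ is equally well computed on any single slice. In particular, for any Cauchy sequence $(f_n)_{n \geq 0}$ with respect to the norm $\sqrt{\langle\cdot,\cdot\rangle_{\dirichlet}}$, the equality $\langle f, f\rangle_{\dirichlet} = \|f\|_{\dirichlet}^2$ shows that $(f_n)$ is Cauchy in $(\dirichlet,\|\cdot\|_{\dirichlet})$, which is complete by the preceding proposition; its limit is thus also the limit in the inner-product norm.

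I do not anticipate a genuine obstacle here, since both the inner product structure and the completeness have already been set up in the preceding results; the only delicate point is recognizing that the supremum in the definition of $\langle\cdot,\cdot\rangle_{\dirichlet}$ is attained on every slice, which follows directly from the Fourier coefficient formula. Combining the identity of norms with the already established completeness of $(\dirichlet,\|\cdot\|_{\dirichlet})$ yields the claim that $(\dirichlet,\langle\cdot,\cdot\rangle_{\dirichlet})$ is a quaternionic right Hilbert space.
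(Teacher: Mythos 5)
Your argument is correct and follows essentially the same route as the paper: the preceding propositions already supply the inner-product axioms and the completeness of $(\dirichlet,\|\cdot\|_{\dirichlet})$, and setting $g=f$ in the definition of $\langle\cdot,\cdot\rangle_{\dirichlet}$ identifies the induced norm with $\|\cdot\|_{\dirichlet}$. Your additional observation that the Parseval identity makes the slice integral independent of $i$ (so the supremum is attained on every slice) is a sound extra justification that the paper leaves implicit, but it is not needed beyond the direct comparison of the two definitions.
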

\begin{proof}
The previous proposition shows that $\langle\cdot ,\cdot \rangle_{ \dirichlet}$  is a quaternionic right linear inner product. Furthermore, the induced norm $ \sqrt{\langle f ,f \rangle_{ \dirichlet} }$ coincides with $\|f \|_{\dirichlet}$, so the space is also complete.
\end{proof}

\end{document}